\newtheorem{thm}{Theorem}[section]
\newtheorem{lem}[thm]{Lemma}
\newtheorem{prop}[thm]{Proposition}
\newtheorem{cor}[thm]{Corollary}
\theoremstyle{definition}
\theoremstyle{remark}
\newtheorem{rem}[thm]{Remark}
\numberwithin{equation}{section}
\begin{document}

% \title[short text for running head]{full title}
\title[Extremal Parameters Curve]{On The Extremal Parameters Curve of a Quasilinear Elliptic System of Differential Equations }

%    Only \author and \address are required; other information is
%    optional.  Remove any unused author tags.

%    author one information
% \author[short version for running head]{name for top of paper}

%\authorrunning{Short form of author list} % if too long for running head

%    author two information
\author{Kaye Silva}
\address{Universidade Federal de Goi\'{a}s}
\curraddr{}
\email{kayeoliveira@hotmail.com}
\thanks{}

\author{Abiel Macedo}
\address{Universidade Federal de Goi\'{a}s}
\curraddr{}
\email{abiel@gmail.com}
\thanks{}

%    \subjclass is required.
\subjclass[2010]{Primary 	35G55, 35A15, 35B32, 35B09 .}

\date{}

\dedicatory{}

%    "Communicated by" -- provide editor's name; required.
\commby{}

%    Abstract is required.
\begin{abstract} We consider a system of quasilinear elliptic equations, with indefinite super-linear nonlinearity, depending on two real parameters $\lambda,\mu$. By using the Nehari manifold and the notion of extremal parameter, we extend some results concerning existence of positive solutions.
\end{abstract}
\keywords{Elliptic System, $p$-Laplacian, Variational Methods, Extremal Paramters, Nehari Manifold, Fibering Method, Indefinite Nonlinearity}

\maketitle

\section{Introduction}

In this work we study the following system of quasilinear elliptic equations

\begin{equation}\label{pq}\tag{$p,q$}
\left\{
\begin{aligned}
-\Delta_p u &= \lambda |u|^{p-2}u+\alpha f|u|^{\alpha-2}|v|^\beta u 
&\mbox{in}\ \ \Omega, \nonumber\\ 
-\Delta_q v &= \mu |v|^{q-2}v+\beta f|u|^{\alpha}|v|^{\beta-2}v
&\mbox{in}\ \ \Omega, \\
&(u,v)\in W_0^{1,p}(\Omega)\times W_0^{1,q}(\Omega). \nonumber
\end{aligned}
\right.
\end{equation}
where $\Omega\subset \mathbb{R}^N$ is a bounded domain with regular boundary, $\lambda,\mu\in\mathbb{R}$, $1<p,q<\infty$ and 

\begin{equation}\label{alphabeta}\tag{$\alpha,\beta$}
\frac{\alpha}{p}+\frac{\beta}{q}>1,\ \ \alpha> p\ \mbox{or}\ \ \beta>q,\ \ \frac{\alpha}{p^*}+\frac{\beta}{q^*}<1.
\end{equation}

The symbols $-\Delta_p,-\Delta_q$ denotes the $p$ and $q$ Laplacian operators and $p^*$ and $q^*$ are the critical Sobolev exponents. We assume that the nonlinearity is indefinite, that is $f\in L^\infty(\Omega)$ and $f^+\equiv \max\{f,0\}$, $f^-\equiv\max\{-f,0\}$ are not identically zero in $\Omega$. From now on $(\lambda_1,\varphi_1)$ and $(\mu_1,\psi_1)$ denotes the first eingenpair of the operators $-\Delta_p$ and $-\Delta_q$ respectively on $\Omega$. We say that $(u,v)\in  W_0^{1,p}(\Omega)\times W_0^{1,q}(\Omega)$ is a positive solution of \eqref{pq} if $u(x)>0$, $v(x)>0$ for all $x\in \Omega$ and  $(u,v)$ is a critical point of the energy functional defined by
\begin{equation}
\Phi_\sigma(u,v)=\frac{1}{p}\left(\|\nabla u\|_p^p-\lambda \|u\|_p^p\right)+\frac{1}{q}\left(\|\nabla v\|_q^q-\lambda \|v\|_q^q\right)+F(u,v),
\end{equation}
where $\sigma=(\lambda,\mu)$, $F(u,v)=\int_{\Omega} f|u|^\alpha |v|^\beta$  and $\|\cdot\|_p$, $\|\cdot\|_q$ are the standard $L^p$ and $L^q$ norm on $\Omega$. We consider $W_0^{1,p}(\Omega)$ and $W_0^{1,q}(\Omega)$ with the standard Sobolev norms $\|u\|_{1,p}=\|\nabla u\|_p$ and $\|v\|_{1,q}=\|\nabla v\|_q$. We intend to understand the set of parameters for which positive solutions of \eqref{pq} does exist. Define $\lambda^*=\sigma^*\lambda_1$ and $\mu^*=\sigma^*\mu_1$ where

\begin{equation*}
\sigma^*=\inf_{(u,v)\in  W_0^{1,p}(\Omega)\times W_0^{1,q}(\Omega)}\left\{\max\left\{\frac{1}{\lambda_1}\frac{\int|\nabla u|^p}{\int |u|^p},\frac{1}{\mu_1}\frac{\int|\nabla v|^q}{\int |v|^q}\right\}:\ F(u,v)\ge 0\right\}.
\end{equation*}
 Following the works of Bozhkov-Mitidieri \cite{bozmit} and Bobkov-Il'yasov \cite{bobil,bobil2} one has the following structure (see Figure \ref{8}):
 
\begin{figure}[H]
	\centering\footnotesize   
	{\def\svgwidth{0.8\linewidth}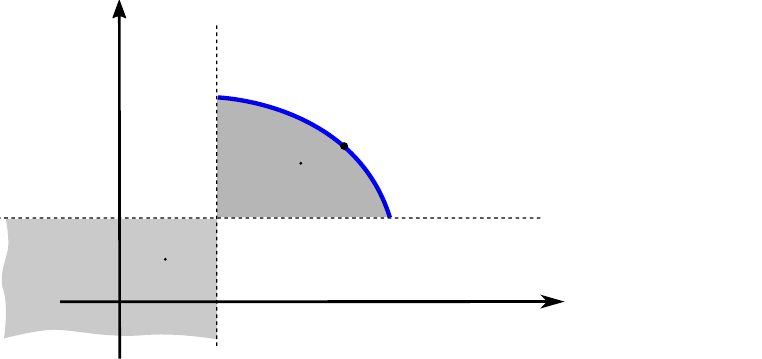}
	\caption{Extremal Parameters Curve}
	\label{8}
\end{figure}
 
 \begin{description}
 	\item[I)] if $\sigma=(\lambda,\mu)\in (-\infty,\lambda_1)\times (-\infty,\mu_1)$, then the problem \eqref{pq} admits at least one positive solution $(u,v)$ with $\Phi_\sigma (u,v)>0$;
 	\item[II)] there exists a curve $\gamma^*$, which determines a region $\gamma^*_-$ over the parameter plane such that for each $(\lambda,\mu)\in \gamma^*_-$, the problem \eqref{pq} admits at least one positive solution $(u,v)$ with $\Phi_\sigma(u,v)<0$.
 \end{description}

 In both works, the authors uses the fibering Method of Pohozaev \cite{poh}. We extend these results by showing the existence of positive solutions when $(\lambda,\mu)\in \gamma^*_+\cup\gamma^*$. The Nehari manifold (see Nehari \cite{neh1}) si defined by
 \begin{equation*}
 \mathcal{N}_\sigma =\{(u,v)\in  W_0^{1,p}(\Omega)\times W_0^{1,q}(\Omega):\ D\Phi_\sigma(u,v)(u,v)=0  \},
 \end{equation*}
 where $D\Phi_\sigma (u,v)$ denotes the Frechet derivative of $\Phi_\sigma$. Let $\Omega^+=\{x\in \Omega:\ f(x)>0\}$, $\Omega^-=\{x\in \Omega:\ f(x)<0\}$ and $\Omega^0=\{x\in \Omega:\ f(x)=0\}$. In this paper we assume one of the following hypothesis

\begin{description}
	\item[$f_1$] the measure of $\Omega^0$ is zero;
	\item[$f_2$] the measure of $\Omega^0$ is not zero and the interior of $\Omega^0\cup \Omega^+$ is a regular domain. Moreover $\operatorname{int}(\Omega^0\cup \Omega^+)$ contains a connected component which intersects both $\Omega^0$ and $\Omega^+$.
\end{description}

Our first result is the following

\begin{thm}\label{AKGLOBAL} Assume that \eqref{alphabeta} is satisfied and $F(\varphi_1,\psi_1)<0$. If $f_1$ or $f_2$ are satisfied, then there exists a curve $\gamma^*\subset \mathbb{R}^2$ which is the union of the graph of two continuous decreasing functions $\mu_{ext}:(\lambda_1,\lambda^*]\to \mathbb{R}$ and $\lambda_{ext}:(\mu_1,\mu^*]\to \mathbb{R}$ satisfying $\mu_{ext}(\lambda^*)=\mu^*$, $\lambda_{ext}(\mu^*)=\lambda ^*$ and
	
	\begin{description}
			\item[i)] for each $\sigma \in \gamma^*$, there exists a positive solution to \eqref{pq} with zero energy;
		\item[ii)] for each $\sigma=(\lambda,\mu)$ with $(\lambda\in (\lambda_1,\lambda^*] \mbox{ and } \mu\le\mu_{ext}(\lambda))$ or $(\mu\in (\mu_1,\mu^*] \mbox{ and } \lambda\le\lambda_{ext}(\mu))$, the energy functional $\Phi_\sigma$ is bounded from below over the Nehari manifold $\mathcal{N}_\sigma$;
		\item[iii)]for each $\sigma=(\lambda,\mu)$ with $(\lambda\in (\lambda_1,\lambda^*] \mbox{ and } \mu_{ext}(\lambda)<\mu)$ or $(\mu\in (\mu_1,\mu^*] \mbox{ and } \lambda_{ext}(\mu)<\lambda)$ or $(\lambda>\lambda^* \mbox{ and } \mu>\mu*)$, the energy functional $\Phi_\sigma$ is unbounded from below over the Nehari manifold $\mathcal{N}_\sigma$;

	\end{description}
	
\end{thm}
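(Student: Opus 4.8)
The natural tool is the fibering method. For $(u,v)\in W_0^{1,p}(\Omega)\times W_0^{1,q}(\Omega)$ set $A(u)=\|\nabla u\|_p^p-\lambda\|u\|_p^p$, $B(v)=\|\nabla v\|_q^q-\mu\|v\|_q^q$ and $F=F(u,v)$, and study the fiber map $\phi_{u,v}(t)=\Phi_\sigma(tu,tv)=\frac{t^p}{p}A(u)+\frac{t^q}{q}B(v)+t^{\alpha+\beta}F$. A pair lies on $\mathcal{N}_\sigma$ exactly when $\phi_{u,v}'(1)=0$, i.e. $A(u)+B(v)+(\alpha+\beta)F=0$; eliminating $F$ through this constraint yields, on $\mathcal{N}_\sigma$, the basic identity $\Phi_\sigma(u,v)=\theta_p A(u)+\theta_q B(v)$ with $\theta_p=\frac1p-\frac1{\alpha+\beta}$ and $\theta_q=\frac1q-\frac1{\alpha+\beta}$. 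The first step is to classify the shape of $\phi_{u,v}$ according to the signs of $A(u),B(v),F$ (equivalently, according to whether the Rayleigh quotients of $u$ and $v$ fall below $\lambda$ and $\mu$), splitting $\mathcal{N}_\sigma$ into the usual strata $\mathcal{N}_\sigma^+,\mathcal{N}_\sigma^-,\mathcal{N}_\sigma^0$ of fiber minima, maxima and degenerate points. Only directions with $F<0$, or those producing a fiber local minimum, can drive the energy to $-\infty$.

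For fixed $\lambda\in(\lambda_1,\lambda^*]$ I would define $\mu_{ext}(\lambda)=\sup\{\mu:\ \Phi_{(\lambda,\mu)}\ \text{is bounded below on}\ \mathcal{N}_{(\lambda,\mu)}\}$, and $\lambda_{ext}$ symmetrically. Since $\phi_{u,v}(t)$ is strictly decreasing in each of $\lambda,\mu$ for fixed $(u,v)$ and $t$, one shows via the fiber characterization of $\inf_{\mathcal{N}_\sigma}\Phi_\sigma$ that enlarging either parameter can only lower this infimum; hence the region of boundedness is a lower set in $(\lambda,\mu)$, $\mu_{ext}$ and $\lambda_{ext}$ are decreasing, and the alternative in (ii)--(iii) reduces to showing that these thresholds are finite, attained on the bounded side, and described by the stated graphs. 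Finiteness of $\mu_{ext}(\lambda)$ uses the hypotheses: $\lambda>\lambda_1$ gives $A(\varphi_1)<0$ and $F(\varphi_1,\psi_1)<0$, so for $\mu$ large a suitable rescaling of $(\varphi_1,\psi_1)$ has no positive-level Nehari point and, after adjusting the two scalings to stay on $\mathcal{N}_\sigma$, produces sequences with $\theta_p A+\theta_q B\to-\infty$.

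To tie $\gamma^*$ to a zero-energy level I would use the nonlinear generalized Rayleigh quotient: for a direction $w=(u,v)$ the tangency system $\phi_w(t_0)=0$, $\phi_w'(t_0)=0$ is equivalent, on $\mathcal{N}_\sigma$, to $\theta_p A(t_0u)+\theta_q B(t_0v)=0$, that is, to a Nehari point of zero energy; solving it for $\mu$ gives the value $\mu(w)$ at which the fiber of $w$ becomes tangent to the axis. I would characterize $\gamma^*$ through these tangent configurations as an extremal value over the relevant directions, so that at $\sigma\in\gamma^*$ the extremal configuration is precisely a zero-energy Nehari point, and I would verify that this extremal locus coincides with the boundedness threshold defined above. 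Continuity of $\mu_{ext},\lambda_{ext}$ then follows from monotonicity together with the continuous dependence of $\mu(w)$ on the parameters and a squeezing argument. For the endpoints I would let $\lambda\uparrow\lambda^*$ and show that the extremal directions converge to configurations with $F=0$ whose Rayleigh quotients saturate the eigenvalues; by the definition of $\sigma^*$ and $\lambda^*=\sigma^*\lambda_1$, $\mu^*=\sigma^*\mu_1$ this forces $\mu_{ext}(\lambda^*)=\mu^*$, and symmetrically $\lambda_{ext}(\mu^*)=\lambda^*$, so the graphs join at $(\lambda^*,\mu^*)$.

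For (i), at $\sigma\in\gamma^*$ the extremal direction $w_0$ provides, after the tangency rescaling, a zero-energy point $(u_0,v_0)\in\mathcal{N}_\sigma$ realizing the extremal value; I would prove it is a weak solution through the natural-constraint property of the Nehari manifold (the borderline character of $\sigma$ trivializes the Lagrange multiplier), then pass to $(|u_0|,|v_0|)$ and invoke elliptic regularity and the strong maximum principle to obtain a positive solution, which by construction has $\Phi_\sigma=0$. For (iii), beyond $\gamma^*$ -- and beyond the corner $(\lambda^*,\mu^*)$, where the definition of $\sigma^*$ produces a direction with $F\ge 0$ and $A(u),B(v)<0$ -- the same rescaling drives the reduced energy to $-\infty$. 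I expect the real difficulty to lie in two places. The boundedness dichotomy is delicate because \eqref{alphabeta} allows $\theta_p$ or $\theta_q$ to be negative, so $\theta_p A+\theta_q B$ is sign-indefinite and the control of the Nehari levels must be carried out case by case in the sign pattern of $(A,B,F)$. More seriously, attaining the extremal value and proving that the limit is a genuine solution demands compactness of extremal sequences exactly where the Nehari constraint degenerates; this is precisely where $f_1$ and $f_2$ enter, ruling out concentration of the extremal configuration on the nodal set $\Omega^0$ and restoring the compactness and strict positivity needed to close the argument.
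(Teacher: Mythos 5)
Your plan diverges from the paper's proof in ways that are not merely cosmetic, and at least one of its key steps fails. First, the structural point: the paper never uses the diagonal fibering $t\mapsto\Phi_\sigma(tu,tv)$. It scales the two components independently, $(t,s)\mapsto\Phi_\sigma(tu,sv)$, which is essential here: with $p\neq q$ your diagonal fiber map has three unrelated homogeneities $t^p$, $t^q$, $t^{\alpha+\beta}$ and its critical points admit no closed form, whereas the two-parameter scaling yields the explicit expressions \eqref{t}, \eqref{s} and the reduced functional \eqref{J1},
$J_\sigma(u,v)=-C\,|P_\lambda(u)|^{\alpha/(pd)}|Q_\mu(v)|^{\beta/(qd)}/|F(u,v)|^{1/d}$,
from which essentially every hard step of the paper is read off: monotonicity of $\hat J_\sigma$ in $\sigma$ (Corollary \ref{decrea} --- note your claim that ``enlarging either parameter can only lower the infimum'' is not immediate, since $\mathcal{N}_\sigma$ itself moves with $\sigma$; the paper gets it by evaluating $J_{\sigma'}$ at a fixed direction), the blow-up $\hat J_\sigma=-\infty$ above the curve (Proposition \ref{unboun}: perturb a degenerate point so that $F<0$ is small while $|P_\lambda|,|Q_\mu|$ stay bounded away from zero), and the compactness at the curve (Proposition \ref{extre1}). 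Second, the paper does not define $\mu_{ext},\lambda_{ext}$ as boundedness thresholds but as constrained eigenvalue-type problems, $\mu_{ext}(\lambda)=\inf\left\{\mu_1^{-1}\int|\nabla v|^q/\int|v|^q:\ P_\lambda(u)\le 0,\ F(u,v)\ge 0\right\}$. With your sup-definition, items (ii)--(iii) off the curve reduce to the monotonicity claim, but everything with real content --- finiteness, continuity, the endpoint identities, boundedness \emph{at} $\mu=\mu_{ext}(\lambda)$ (which item (ii) requires), and item (i) --- is left to arguments you only gesture at.

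The step that actually fails is your route to (i). The zero-energy solutions on $\gamma^*$ satisfy $P_\lambda(u)=Q_\mu(v)=F(u,v)=0$, i.e.\ they lie in $\mathcal{N}^0_\sigma$, the degenerate part of the Nehari set, and there the natural-constraint property is precisely what breaks down: writing $G(u,v)=D\Phi_\sigma(u,v)(u,v)=P_\lambda(u)+Q_\mu(v)+(\alpha+\beta)F(u,v)$, at such a point $DG(u,v)(u,v)=pP_\lambda(u)+qQ_\mu(v)+(\alpha+\beta)^2F(u,v)=0$, so the Lagrange multiplier cannot be removed and a constrained critical point need not be a free critical point. Worse, these points cannot be produced by minimizing the energy on the Nehari manifold at all: at $\sigma\in\gamma^*$ the infimum of $\Phi_\sigma$ over $\mathcal{N}^+_\sigma$ is attained and strictly \emph{negative} (Proposition \ref{extre1}, Lemma \ref{minextre}; this is exactly how Theorem \ref{AKGLOBAL1} produces a second, negative-energy solution at the same $\sigma$), so your assertion that the extremal configuration ``realizes'' a zero-energy Nehari point is false. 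The paper obtains (i) by a different mechanism: Lagrange multiplier analysis of the constrained problem $\mu_{ext}(\lambda)$ itself (Proposition \ref{minimiproblems}, following the scheme of Lemma \ref{extremequal} and Corollary \ref{solution}), where the hypotheses $f_1$/$f_2$, regularity, and the maximum principle are used to prove that $DF$ is surjective at a minimizer; this surjectivity is what replaces the failed natural-constraint argument and turns the degenerate minimizer, after scaling each component separately, into a genuine solution of \eqref{pq} with zero energy. Without that ingredient your plan yields neither (i) nor the boundedness in (ii) on the curve itself.
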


Problems with indefinite nonlinearity has a long history (see for example Alama-Tarantello \cite{alam}, Ouyang \cite{ou2}, Berestycki-Capuzzo Dolcetta-Nirenberg \cite{becapunire}, Il'yasov \cite{ilyas} and the references therein). In general, a problem with indefinite nonlinearity possess multiplicity of positive solutions. This multiplicity is related with the behavior of the Nehari manifold accordingly with the parameter. In Il'yasov \cite{ilyasENMM}, the author formalized the notion of extremal parameter. In terms of the Nehari manifold, the extremal parameter $\sigma^*$ defines a threshold for which minimization of the energy functional $\Phi_\sigma$ over $\mathcal{N}_\sigma$ is available. In fact, the curve $\gamma^*$, which from now on we will call extremal parameters curve, divides the parameter space $(\lambda,\mu)$ in such a way that above the curve the energy of $\Phi_\sigma$ over $\mathcal{N}_\sigma$ is unbounded from below while under the curve the energy of $\Phi_\sigma$ over $\mathcal{N}_\sigma$ is bounded from below. Similar results have been obtained in Il'yasov-Silva \cite{kaya} and Silva-Macedo \cite{kayeabi}. 

We observe that $\gamma^*=\Gamma_f((0,\infty))$, where $\Gamma_f((0,\infty))$ is defined in \cite{bobil2}. However, as will be seen on the text our definition is slight different from that one given in  \cite{bobil2} and it allows us to extract more information about the problem \eqref{pq}. As a consequence  we obtain our second result
\begin{thm}\label{AKGLOBAL1}
	 Assume that \eqref{alphabeta} is satisfied and $F(\varphi_1,\psi_1)<0$. If $f_1$ or $f_2$ are satisfied, then for each $\sigma=(\lambda,\mu)\in \gamma^*$, there exists $\varepsilon_\sigma$ such that problem \eqref{pq} has at least a positive solution $(u,v)$ with negative energy for all $\bar{\sigma}\in [\lambda,\lambda+\varepsilon_\sigma)\times[\mu,\mu+\varepsilon_\sigma)$.
\end{thm}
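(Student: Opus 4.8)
The plan rests on two ingredients. First, Theorem~\ref{AKGLOBAL}(i) supplies, for the given $\sigma=(\lambda,\mu)\in\gamma^*$, a positive solution $w_\sigma=(u_\sigma,v_\sigma)\in\mathcal N_\sigma$ with $\Phi_\sigma(w_\sigma)=0$. Testing the two equations of \eqref{pq} against $u_\sigma$ and $v_\sigma$ yields the identities $\|\nabla u_\sigma\|_p^p-\lambda\|u_\sigma\|_p^p=\mp\alpha F(u_\sigma,v_\sigma)$ and $\|\nabla v_\sigma\|_q^q-\mu\|v_\sigma\|_q^q=\mp\beta F(u_\sigma,v_\sigma)$, whence $\Phi_\sigma(w_\sigma)=\kappa\,F(u_\sigma,v_\sigma)$ for a constant $\kappa\neq0$ depending only on $p,q,\alpha,\beta$. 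Combined with $\Phi_\sigma(w_\sigma)=0$ this forces $F(u_\sigma,v_\sigma)=0$, so $w_\sigma$ sits on the degenerate set $\{F=0\}$ and its fiber $t\mapsto\Phi_\sigma(tw_\sigma)$ is degenerate at $t=1$. Second, for every $(u,v)$ with $u,v\not\equiv0$ and every $\bar\sigma=(\bar\lambda,\bar\mu)\geq\sigma$ one has the strict monotonicity
$$
\Phi_{\bar\sigma}(u,v)=\Phi_\sigma(u,v)-\frac{\bar\lambda-\lambda}{p}\|u\|_p^p-\frac{\bar\mu-\mu}{q}\|v\|_q^q<\Phi_\sigma(u,v),\qquad \bar\sigma\neq\sigma .
$$

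Because $\gamma^*$ is the graph of decreasing functions, for small $\varepsilon$ every $\bar\sigma\neq\sigma$ in the box $[\lambda,\lambda+\varepsilon)\times[\mu,\mu+\varepsilon)$ falls in the region described in Theorem~\ref{AKGLOBAL}(iii), where $\Phi_{\bar\sigma}$ is unbounded from below on $\mathcal N_{\bar\sigma}$. A global minimization is therefore excluded, and I would instead perform a \emph{local} minimization. Fix a small closed ball $\overline B_\rho$ centered at $w_\sigma$ in $W_0^{1,p}(\Omega)\times W_0^{1,q}(\Omega)$ and set
$$
c(\bar\sigma)=\inf\Big\{\Phi_{\bar\sigma}(u,v):\,(u,v)\in\mathcal N_{\bar\sigma}\cap\overline B_\rho\Big\}.
$$
The argument then splits into three points. \textbf{Negativity:} since $F$ is indefinite and $F(w_\sigma)=0$, there are directions $(u,v)$ arbitrarily close to $w_\sigma$ lying on the side of $\{F=0\}$ on which the fiber $t\mapsto\Phi_{\bar\sigma}(tu,tv)$ is coercive; for such a direction this fiber has a strict negative local minimum at some $t_*$, and by the monotonicity displayed above this minimum stays $<0$ while $t_*(u,v)$ remains in $\overline B_\rho$, provided $\bar\sigma-\sigma$ is small. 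Hence $c(\bar\sigma)<0$. \textbf{Attainment:} the subcriticality $\frac\alpha{p^*}+\frac\beta{q^*}<1$ in \eqref{alphabeta} renders the superlinear term compact, so a minimizing sequence in the bounded set $\overline B_\rho$ converges weakly and, by the $(S_+)$ property of $-\Delta_p$ and $-\Delta_q$, strongly; the limit $w_{\bar\sigma}$ is nonzero (its energy is negative) and realizes $c(\bar\sigma)$. \textbf{Solution and sign:} one then checks that $w_{\bar\sigma}$ lies in the interior of $\overline B_\rho$ and on the nondegenerate part of $\mathcal N_{\bar\sigma}$ (where the second fiber derivative is positive), so that the Lagrange multiplier vanishes and $w_{\bar\sigma}$ is a free critical point of $\Phi_{\bar\sigma}$, i.e.\ a weak solution of \eqref{pq}; replacing $(u,v)$ by $(|u|,|v|)$ and invoking the strong maximum principle upgrades it to a positive solution, with $\Phi_{\bar\sigma}(w_{\bar\sigma})=c(\bar\sigma)<0$.

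The delicate point, and the one I expect to fix the size of $\varepsilon_\sigma$, is the interface between \textbf{Negativity} and \textbf{Solution}: precisely because $\sigma$ is extremal, the degenerate set $\mathcal N^{0}_{\bar\sigma}$ is nonempty and sits close to $w_\sigma$, so I must quantify how the degenerate critical point of $\Phi_\sigma$ unfolds into a genuine local minimum of $\Phi_{\bar\sigma}$ and rule out both that the infimum $c(\bar\sigma)$ is attained on $\partial B_\rho$ and that it drifts onto $\mathcal N^{0}_{\bar\sigma}$. The plan is to carry out a second–order (fibering) expansion of $\Phi_{\bar\sigma}$ on an annulus around $w_\sigma$, uniform for $\bar\sigma$ in the box, showing that the fiber second derivative stays bounded away from $0$ there once $\bar\sigma$ is close enough to $\sigma$; this simultaneously fixes $\varepsilon_\sigma$ and produces the interior, nondegenerate minimizer required above. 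The boundary case $\bar\sigma=\sigma$ is exactly Theorem~\ref{AKGLOBAL}(i).
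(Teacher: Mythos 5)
Your proposal diverges from the paper's argument at two points, and both are genuine gaps. First, the corner $\bar\sigma=\sigma$: you dispose of it by citing Theorem \ref{AKGLOBAL}(i), but that statement produces a positive solution with \emph{zero} energy, while Theorem \ref{AKGLOBAL1} requires a solution with \emph{negative} energy for every $\bar\sigma$ in the half-open box, including $\bar\sigma=\sigma$ itself. Producing a negative-energy solution exactly on $\gamma^*$ is one of the main technical contributions of the paper: it is Lemma \ref{minextre} (via Proposition \ref{extre1}), which shows by a limiting argument along $\gamma^*_-$ that $\hat{J}_\sigma>-\infty$ and that the infimum is attained by some $(u,v)\in\mathcal{S}_\sigma$; since $J_\sigma<0$ on all of $\Theta_\sigma$ by \eqref{J1}, this minimizer automatically has negative energy. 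Your plan contains no substitute for this step, so even the base case of the theorem is unproved.

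Second, your localization is centered at the wrong solution. As you correctly compute, the zero-energy solution $w_\sigma$ satisfies $P_\lambda(u_\sigma)=Q_\mu(v_\sigma)=F(u_\sigma,v_\sigma)=0$, i.e. $w_\sigma\in\mathcal{N}_\sigma^0$: it \emph{is} a point of the degenerate set. Every Nehari point of $\mathcal{N}_{\bar\sigma}^+$ inside a small ball $\overline{B}_\rho(w_\sigma)$ necessarily has $|P|,|Q|,|F|$ of order $|\bar\sigma-\sigma|+O(\rho)$, and the fibering second derivative at such points is proportional to these small quantities; hence no lower bound on the fiber second derivative can hold on an annulus around $w_\sigma$ uniformly as $\bar\sigma\downarrow\sigma$, which is precisely the uniformity your "delicate point" requires (the degeneracy at $w_\sigma$ is intrinsic to the extremal parameter, not a removable artifact — the unbounded sequences of Proposition \ref{unboun} are generated exactly by perturbing $w_\sigma$ so that $F\to0^-$). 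Consequently you cannot rule out that $c(\bar\sigma)$ is attained on $\partial B_\rho$ or drifts onto $\mathcal{N}_{\bar\sigma}^0$, and the argument stalls. The paper avoids this entirely: it localizes around the \emph{negative-energy} minimizer set $\mathcal{S}_\sigma$, which by Lemma \ref{minextre} is separated from $\mathcal{N}_\sigma^0$ in the quantitative sense $P_{\nu_\sigma}(u)\le0$, $Q_{\bar{\nu}_\sigma}(v)\le0$ for some $(\nu_\sigma,\bar{\nu}_\sigma)$ strictly below $\sigma$; and instead of a metric ball it uses the constraint region $\overline{\Theta}_{\bar\sigma,\omega}=\{P_\nu\le0,\,Q_{\bar\nu}\le0\}$ with $\omega\in\gamma^*_-$, on which $F$ is bounded away from zero (Proposition \ref{Fbounded}), so that $\hat{J}_{\bar\sigma,\omega}>-\infty$, minimizers exist (Proposition \ref{existence}), and the convergence argument of Proposition \ref{uni} combined with the separation property (Proposition \ref{Ssigma}) forces these minimizers to be interior, hence free critical points (Lemma \ref{locmin}, Proposition \ref{criticalpoint}). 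In short, the solutions for $\bar\sigma$ near $\sigma$ continue from the negative-energy branch $\mathcal{S}_\sigma$, not from $w_\sigma$; a local analysis near $w_\sigma$ looks in the wrong place and cannot be repaired by the second-order expansion you propose.
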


 \begin{figure}[H]
 	\centering\footnotesize   
 	{\def\svgwidth{0.8\linewidth}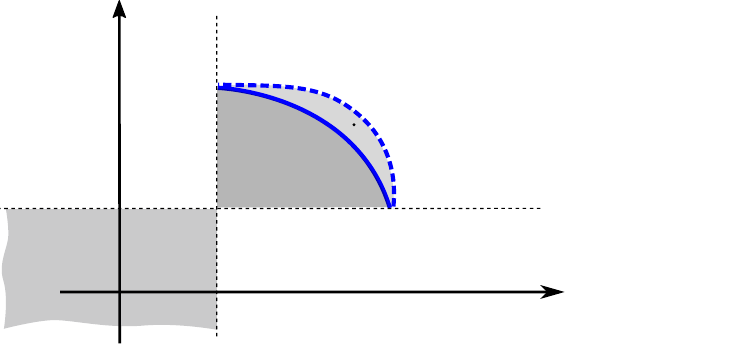}
 	\caption{Positive Solutions For $\sigma$ above the curve $\gamma^*$}
 	\label{8}
 \end{figure}
 
Theorem \ref{AKGLOBAL1} is an improvement on the results of \cite{bobil,bobil2}, since it shows existence of positive solutions to \eqref{pq} when $\sigma$ is above the curve. 

The work is organized as follows: in section \ref{tec} we prove some technical results. In section \ref{Sec} we study the extremal curve $\gamma^*$ . In section \ref{abovepar} we prove the existence of positive solution under the extremal parameter curve while in section \ref{atpar} we prove the existence of a positive solution with negative energy at the extremal parameter curve. In the section \ref{over*} we prove the existence of positive solution on $\gamma^*_+$. In section \ref{sumres} we prove the Theorem \ref{AKGLOBAL} and \ref{AKGLOBAL1}. In section \ref{remarks} we give some remarks on the possible existence of a second branch of positive solutions and a  bifurcation result. In the Appendix we prove an auxiliary result.

\section{Notation and Technical Results}\label{tec}

In this section we establish some notations and results which will be useful in the next sections. Some of the results here can be found in \cite{bobil}. For $\sigma\in \mathbb{R}^2$ and $(u,v)\in W_0^{1,p}(\Omega)\times W_0^{1,q}(\Omega)$, we denote
\begin{equation*}
P_\lambda(u)=\int |\nabla u|^p-\lambda \int|u|^p,\ Q_\mu(v)=\int |\nabla v|^q-\mu \int|v|^q.
\end{equation*}
The Nehari manifold is defined by 
\begin{equation}\label{nehari}
\mathcal{N}_\sigma =\{(u,v)\in  W_0^{1,p}(\Omega)\times W_0^{1,q}(\Omega):\ D\Phi_\sigma(u,v)(u,v)=0  \},
\end{equation}
and it can always be divided into three sets (some of them may be empty) in the following way $\mathcal{N}_\sigma=\mathcal{N}_\sigma^+\cup \mathcal{N}_\sigma^0\cup \mathcal{N}_\sigma^-$ where 
\begin{equation*}\label{N+}
\mathcal{N}_\sigma^+=\{(u,v)\in \mathcal{N}_\sigma:\ P_\lambda(u)<0,\ Q_\mu(v)<0,\ F(u,v)<0 \},
\end{equation*}
\begin{equation*}\label{N0}
\mathcal{N}_\sigma^0=\{(u,v)\in \mathcal{N}_\sigma:\ P_\lambda(u)=Q_\mu(v)=F(u,v)=0 \},
\end{equation*}
and
\begin{equation*}\label{N-}
\mathcal{N}_\sigma^-=\{(u,v)\in \mathcal{N}_\sigma:\ P_\lambda(u)>0,\ Q_\mu(v)>0,\ F(u,v)>0 \}.
\end{equation*}

Observe that all critical points of $\Phi_\sigma$ are contained in $\mathcal{N}_\sigma$. One can see (by studying the signal of the energy functional $\Phi_\sigma$) that the solutions found in \cite{bobil} belongs to $\mathcal{N}_\sigma^-$ if $\sigma\in (-\infty,\lambda_1)\times (-\infty,\mu_1)$ and they belong to $\mathcal{N}_\sigma^+$ if $\sigma=(\lambda,\mu)$ and $\lambda>\lambda_1$, $\mu>\mu_1$. 

In this work we are interested on the sets $\mathcal{N}_\sigma^+$ and $\mathcal{N}_\sigma^0$. When $\mathcal{N}_{\sigma}^+\neq \emptyset$, the Implicit Function Theorem and \eqref{alphabeta} implies that $\mathcal{N}_{\sigma}^+$ is a $C^1$ manifold of codimension two in  $W_0^{1,p}(\Omega)\times W_0^{1,q}(\Omega)$. Let 
$$
S=\{(u,v)\in  W_0^{1,p}(\Omega)\times W_0^{1,q}(\Omega):\ \|u\|_{1,p}=1=\|v\|_{1,q} \},
$$ 
and 
$$
\Theta_{\sigma}=S\cap \left\{ (u,v)\in  W_0^{1,p}(\Omega)\times W_0^{1,q}(\Omega):\ P_\lambda(u)<0,\ Q_\mu(v)<0,\ F(u,v)<0\right\}.
$$
For each $(u,v)\in \Theta_{\sigma}$, there are unique $t_\sigma\equiv t_\sigma(u,v)$, $s_\sigma\equiv s_\sigma(u,v)$ such that $(t_\sigma u,s_\sigma v)\in\mathcal{N}_{\sigma}^+ $. In fact, 
\begin{equation}\label{t}
t_\sigma^{pqd}=\frac{\alpha^{\beta-q}}{\beta^\beta}\frac{|P_\lambda(u)|^{q-\beta}|Q_\mu(v)|^\beta}{|F(u,v)|^q},
\end{equation}
and
\begin{equation}\label{s}
s_\sigma^{pqd}=\frac{\beta^{\alpha-p}}{\alpha^\alpha}\frac{|P_\lambda(u)|^{\alpha}|Q_\mu(v)|^{p-\alpha}}{|F(u,v)|^p},
\end{equation}
where 
$$
d\equiv\frac{\alpha}{p}+\frac{\beta}{q}-1.
$$

Note that from (\ref{alphabeta}), the number $d$ is positive. From this, it follows that $\Theta_{\sigma}$ is diffeomorphic to $\mathcal{N}_{\sigma}^+$ and if $J_{\sigma}:\Theta_{\sigma}\to \mathbb{R}$ is defined by 
\begin{equation}\label{J1}
J_{\sigma}(u,v)\equiv \Phi_{\sigma}(t_\sigma u,s_\sigma v)=-C\frac{|P_\lambda(u)|^{\alpha/(pd)}|Q_\mu(v)|^{\beta/(qd)}}{|F(u,v)|^{1/d}},
\end{equation}
where 
$$
C=\left(\frac{1}{\alpha^{\alpha q}\beta^{\beta q}}\right)^{\frac{1}{pqd}}d,
$$
we obtain that $J_{\sigma}$ is a $C^1$ functional and critical points of $J_{\sigma}$ over $\Theta_{\sigma}$ are critical points of $\Phi_{\sigma}$ in $W_0^{1,p}(\Omega)\times W_0^{1,q}(\Omega)$. 

\begin{prop}\label{criticalpoint} A point $(u,v)\in \Theta_\sigma$ is a critical point to $J_\sigma$ if, and only if $(t_\sigma u,s_\sigma v)$ is a critical point to $\Phi_\sigma$.
\end{prop}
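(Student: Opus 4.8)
The plan is to read everything through the two-parameter fibering that produced $t_\sigma,s_\sigma$. Fix $(u,v)\in\Theta_\sigma$, write $S=S_p\times S_q$ with $S_p,S_q$ the unit spheres of $W_0^{1,p}(\Omega)$ and $W_0^{1,q}(\Omega)$, and introduce the fiber map $h_{(u,v)}(t,s)=\Phi_\sigma(tu,sv)$, which is smooth on $(0,\infty)^2$ by the homogeneities in \eqref{alphabeta}. One has $\partial_t h_{(u,v)}(t,s)=D\Phi_\sigma(tu,sv)(u,0)$ and $\partial_s h_{(u,v)}(t,s)=D\Phi_\sigma(tu,sv)(0,v)$, while the formulas \eqref{t}--\eqref{s} are precisely the solution of the system $\partial_t h_{(u,v)}=\partial_s h_{(u,v)}=0$; hence $(t_\sigma,s_\sigma)$ is a critical point of $h_{(u,v)}$. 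Setting $(w,z)=(t_\sigma u,s_\sigma v)$, this says that $\Phi_\sigma$ has vanishing derivative along the two fiber directions,
\begin{equation*}
D\Phi_\sigma(w,z)(u,0)=0,\qquad D\Phi_\sigma(w,z)(0,v)=0. \tag{$\ast$}
\end{equation*}
I would record $(\ast)$ as the pivotal fact; the rest is a chain rule together with a splitting of the tangent spaces.

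Next I would differentiate $J_\sigma=\Phi_\sigma\circ m_\sigma$, where $m_\sigma(u,v)=(t_\sigma u,s_\sigma v)$ is the $C^1$ diffeomorphism $\Theta_\sigma\to\mathcal N_\sigma^+$ (recall $t_\sigma,s_\sigma$ are smooth in $(u,v)$ by the explicit formulas \eqref{t}--\eqref{s}). For a tangent vector $(\xi,\zeta)\in T_{(u,v)}S=T_uS_p\times T_vS_q$ the product rule gives $Dm_\sigma(u,v)(\xi,\zeta)=(t_\sigma\xi,s_\sigma\zeta)+a\,(u,0)+b\,(0,v)$, with $a=Dt_\sigma(u,v)(\xi,\zeta)$ and $b=Ds_\sigma(u,v)(\xi,\zeta)$. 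Applying $D\Phi_\sigma(w,z)$ and using $(\ast)$, the two correction terms carrying $a$ and $b$ annihilate, leaving the key identity
\begin{equation*}
DJ_\sigma(u,v)(\xi,\zeta)=D\Phi_\sigma(w,z)(t_\sigma\xi,s_\sigma\zeta),\qquad (\xi,\zeta)\in T_uS_p\times T_vS_q.
\end{equation*}
The virtue of $(\ast)$ is that the unknown derivatives $Dt_\sigma,Ds_\sigma$ never need to be computed.

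Both implications then follow. If $(w,z)$ is a critical point of $\Phi_\sigma$, then $D\Phi_\sigma(w,z)=0$ and the right-hand side vanishes for every $(\xi,\zeta)$, so $(u,v)$ is a critical point of $J_\sigma$. Conversely, if $(u,v)$ is a critical point of $J_\sigma$, the left-hand side vanishes for all tangent $(\xi,\zeta)$; taking first $\zeta=0$ and then $\xi=0$ and using $t_\sigma,s_\sigma>0$ yields $D\Phi_\sigma(w,z)(\xi,0)=0$ for every $\xi\in T_uS_p$ and $D\Phi_\sigma(w,z)(0,\zeta)=0$ for every $\zeta\in T_vS_q$. To upgrade this to $D\Phi_\sigma(w,z)=0$ on the whole product space I would invoke the direct sums $W_0^{1,p}(\Omega)=\mathbb R u\oplus T_uS_p$ and $W_0^{1,q}(\Omega)=\mathbb R v\oplus T_vS_q$, valid because $u\notin T_uS_p$ and $v\notin T_vS_q$ (indeed the derivative of $\|\cdot\|_{1,p}$ at $u$ in the direction $u$ equals $\|u\|_{1,p}=1\neq0$). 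Decomposing an arbitrary direction along these splittings and combining with $(\ast)$ gives $D\Phi_\sigma(w,z)=0$, that is, $(w,z)$ is a critical point of $\Phi_\sigma$.

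The main obstacle, and the step I would set up with care, is $(\ast)$: it is the statement that membership in $\mathcal N_\sigma^+$ forces $\Phi_\sigma$ to have vanishing derivative along \emph{both} fiber directions, which is exactly what makes the $Dt_\sigma$ and $Ds_\sigma$ terms cancel in the chain rule. Here the genuinely two-parameter character of the problem is essential: the two scalings $t_\sigma,s_\sigma$ are matched by the two equations in $(\ast)$, and a single Nehari constraint would not close the argument. Everything else — the $C^1$ Banach-manifold structure of $S$, the smoothness of $t_\sigma,s_\sigma$, and the codimension-one splittings — is routine under \eqref{alphabeta} and $1<p,q<\infty$.
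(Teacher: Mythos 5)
Your proof is correct, and it fills a genuine gap: the paper states Proposition \ref{criticalpoint} with no proof at all, presenting it as the standard fibering-method fact right after \eqref{J1}, so your argument is the natural fleshing-out of what the authors take for granted rather than a rival to an existing argument.

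Two points of comparison with the paper's setup are worth making explicit in your write-up. First, your pivotal identity $(\ast)$ does \emph{not} follow from membership in $\mathcal{N}_\sigma$ as literally defined in \eqref{nehari}, which is the single scalar condition $D\Phi_\sigma(u,v)(u,v)=0$: under that reading the set of $(t,s)$ with $(tu,sv)\in\mathcal{N}_\sigma$ is generically a curve, uniqueness of $(t_\sigma,s_\sigma)$ fails, and the correction terms $a,b$ in your chain rule would not cancel. What makes your argument close is precisely your observation that \eqref{t}--\eqref{s} solve the two-equation system $\partial_t h_{(u,v)}=\partial_s h_{(u,v)}=0$, i.e.\ that $\mathcal{N}_\sigma^+$ must be read as the ``vector'' Nehari set on which \emph{both} fiber derivatives vanish; this is also the only reading consistent with the paper's assertion that $\mathcal{N}_\sigma^+$ is a $C^1$ manifold of codimension two. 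So your proof quietly repairs a sloppiness in the paper's definitions, and you should say so rather than cite \eqref{nehari} as if it directly gave $(\ast)$. Second, a sign caveat: with the displayed $\Phi_\sigma$ (which carries $+F(u,v)$) and $P_\lambda(u),Q_\mu(v),F(u,v)<0$ on $\Theta_\sigma$, the fiber map $h_{(u,v)}$ has no critical point at all, since both terms of $\partial_t h_{(u,v)}$ would be negative; the formulas \eqref{t}, \eqref{s}, \eqref{J1} and the system \eqref{pq} are consistent only with the opposite sign in front of $F$, and your verification that \eqref{t}--\eqref{s} solve the stationarity system should be stated under that (clearly intended) convention. Neither point is an error in your reasoning — the chain-rule identity, the cancellation via $(\ast)$, and the splittings $W_0^{1,p}(\Omega)=\mathbb{R}u\oplus T_uS_p$, $W_0^{1,q}(\Omega)=\mathbb{R}v\oplus T_vS_q$ (justified by Euler's identity for the $C^1$, $1$-homogeneous norm, $1<p,q<\infty$) are all sound — but both conventions need to be fixed explicitly for the proof to parse against the paper as written.
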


When the Nehari manifold $\mathcal{N}_\sigma^+$ is separated from $\mathcal{N}_\sigma^0$, in the sense that the boundary of $\mathcal{N}_\sigma^+$ does not intersect $\mathcal{N}_\sigma^0$, then one can use standard minimization arguments to show that $\Phi_\sigma$ has a minimizer over $\mathcal{N}_\sigma^+$. The main difficult begins when $\mathcal{N}_\sigma^0$ belongs to the boundary of $\mathcal{N}_\sigma^+$. In that case, we need to study the following extremal parameter (see \cite{ilyasENMM})

\begin{equation}\label{extre}
\sigma^*=\inf_{(u,v)\in  W_0^{1,p}(\Omega)\times W_0^{1,q}(\Omega)}\left\{\max\left\{\frac{1}{\lambda_1}\frac{\int|\nabla u|^p}{\int |u|^p},\frac{1}{\mu_1}\frac{\int|\nabla v|^q}{\int |v|^q}\right\}:\ F(u,v)\ge 0\right\}.
\end{equation}
We say that $(u,v)\in W_0^{1,p}(\Omega)\times W_0^{1,q}(\Omega)$ solves $\sigma^*$ if  $F(u,v)\ge 0$ and 
\begin{equation*}
\sigma^*=\max\left\{\frac{1}{\lambda_1}\frac{\int |\nabla u|^p}{\int |u|^p},\frac{1}{\mu_1}\frac{\int |\nabla v|^q}{\int |v|^q}\right\}.
\end{equation*}	

The next proposition can be found in \cite{bobil} but for completeness we prove it here.

\begin{prop}\label{sigma1} Suppose that $F(\varphi_1,\psi_1)<0$, then $\sigma^*>1$.
\end{prop}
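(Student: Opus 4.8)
The plan is to argue by contradiction, using the Rayleigh-quotient characterization of $\lambda_1$ and $\mu_1$ together with the compactness of the embeddings $W_0^{1,p}(\Omega)\hookrightarrow L^p(\Omega)$ and $W_0^{1,q}(\Omega)\hookrightarrow L^q(\Omega)$. First I would record the easy bound $\sigma^*\ge 1$: since
\[
\lambda_1=\inf_{u\ne 0}\frac{\int|\nabla u|^p}{\int|u|^p},\qquad \mu_1=\inf_{v\ne 0}\frac{\int|\nabla v|^q}{\int|v|^q},
\]
each of the two ratios in \eqref{extre} is $\ge 1$, so their maximum is $\ge 1$ for every admissible pair, whence $\sigma^*\ge 1$. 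It therefore suffices to exclude the case $\sigma^*=1$.

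Next I would suppose $\sigma^*=1$ and take a minimizing sequence $(u_n,v_n)$ with $F(u_n,v_n)\ge 0$ along which the maximum tends to $1$. Since both ratios are bounded below by $1$, the maximum tending to $1$ forces $\frac{\int|\nabla u_n|^p}{\int|u_n|^p}\to\lambda_1$ and $\frac{\int|\nabla v_n|^q}{\int|v_n|^q}\to\mu_1$ simultaneously. The ratios are $0$-homogeneous and $F(tu,sv)=t^\alpha s^\beta F(u,v)$, so the constraint $F\ge 0$ is invariant under positive scaling; I may therefore normalize $\|u_n\|_p=\|v_n\|_q=1$, which makes $\int|\nabla u_n|^p\to\lambda_1$ and $\int|\nabla v_n|^q\to\mu_1$, so $(u_n)$ and $(v_n)$ are bounded in $W_0^{1,p}(\Omega)$ and $W_0^{1,q}(\Omega)$. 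Passing to a subsequence, $u_n\rightharpoonup u$ and $v_n\rightharpoonup v$ weakly, and by the compact embeddings $u_n\to u$ in $L^p$, $v_n\to v$ in $L^q$, so $\|u\|_p=\|v\|_q=1$. Weak lower semicontinuity of the gradient norms together with the reverse Rayleigh inequalities $\int|\nabla u|^p\ge\lambda_1\|u\|_p^p$ and $\int|\nabla v|^q\ge\mu_1\|v\|_q^q$ forces equality, so $u$ and $v$ are first eigenfunctions; since the norms converge in the uniformly convex spaces $W_0^{1,p}$, $W_0^{1,q}$, the convergence is in fact strong.

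Finally I would invoke the simplicity of the first eigenvalue of the $p$-Laplacian, which yields $u=c\,\varphi_1$ and $v=d\,\psi_1$ for some $c,d\ne 0$. The growth condition $\frac{\alpha}{p^*}+\frac{\beta}{q^*}<1$ guarantees, via H\"older and the subcritical Sobolev embeddings, that $F$ is continuous for strong convergence in $W_0^{1,p}(\Omega)\times W_0^{1,q}(\Omega)$; hence $0\le F(u_n,v_n)\to F(u,v)=|c|^\alpha|d|^\beta F(\varphi_1,\psi_1)$. As $c,d\ne 0$ and $F(\varphi_1,\psi_1)<0$ by hypothesis, the right-hand side is strictly negative, a contradiction, so $\sigma^*>1$. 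I expect the main technical point to be the passage from weak to strong convergence and the clean identification of the limits as scalar multiples of $\varphi_1,\psi_1$, which rests on simplicity of the first eigenvalue; the continuity of $F$ under the stated subcritical condition is routine.
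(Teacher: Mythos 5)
Your proof is correct and follows essentially the same route as the paper: assume $\sigma^*=1$, force both Rayleigh quotients to equal $\lambda_1$ and $\mu_1$, identify the limits as multiples of $\varphi_1,\psi_1$ by simplicity of the first eigenvalues, and contradict $F(\varphi_1,\psi_1)<0$ against the constraint $F\ge 0$. The only difference is that you spell out the compactness, Radon--Riesz, and weak/strong continuity details that the paper compresses into the phrase ``standard minimization arguments.''
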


\begin{proof} Suppose on the contrary that $\sigma^*=1$. Standard minimization arguments shows the existence of $(u,v)$ which solves $\sigma^*$. It follows that
	\begin{equation*}
	\frac{1}{\lambda_1}\frac{\int |\nabla u|^p}{\int |u|^p}\le 1,\ \ \frac{1}{\mu_1}\frac{\int |\nabla v|^q}{\int |v|^q}\le 1,
	\end{equation*}
which implies that 	$u=c_1\varphi_1$, $v=c_2\psi_1$, where $c_1,c_2$ are positive constants, however, this contradicts the inequality $F(\varphi_1,\psi_1)<0$.
\end{proof}

In the next lemma we prove that the minimum in \eqref{extre} is attained on the boundary.

\begin{lem}\label{extremal} Assume that \eqref{alphabeta} is satisfied and $F(\varphi_1,\psi_1)<0$. If $f_1$ or $f_2$ are satisfied, then there exists $(u,v)\in W_0^{1,p}(\Omega)\times W_0^{1,q}(\Omega)$ which solves $\sigma^*$.	Moreover, if $(u,v)$ satisfies \eqref{extre} then $F(u,v)=0$.
\end{lem}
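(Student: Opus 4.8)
The plan is to argue in two stages: first produce a minimizer of \eqref{extre} by the direct method, then show that any minimizer must lie on the boundary $\{F=0\}$ of the admissible set.

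For existence, I first note that the admissible set $\{(u,v):F(u,v)\ge 0,\ u\neq 0\neq v\}$ is nonempty: since $f^+\not\equiv 0$, at a Lebesgue point $x_0$ of $f$ with $f(x_0)>0$ a family of bumps $\phi_r$ concentrating at $x_0$ satisfies $F(\phi_r,\phi_r)=\int f\phi_r^{\alpha+\beta}>0$ for $r$ small (under $f_2$ one may instead place the bumps inside $\operatorname{int}(\Omega^0\cup\Omega^+)$). Since the ratios in \eqref{extre} are invariant under the separate scalings $u\mapsto tu$, $v\mapsto sv$ and $F(tu,sv)=t^\alpha s^\beta F(u,v)$ keeps the sign of the constraint, I may take a minimizing sequence $(u_n,v_n)$ normalized by $\int|u_n|^p=\int|v_n|^q=1$. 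Then $\max\{\frac{1}{\lambda_1}\int|\nabla u_n|^p,\frac{1}{\mu_1}\int|\nabla v_n|^q\}\to\sigma^*$ bounds $(u_n)$ in $W_0^{1,p}(\Omega)$ and $(v_n)$ in $W_0^{1,q}(\Omega)$, so up to a subsequence $u_n\rightharpoonup u$, $v_n\rightharpoonup v$. By the compact embeddings into $L^p$ and $L^q$ we get $\int|u|^p=\int|v|^q=1$, hence $u\neq 0\neq v$; the subcriticality $\frac{\alpha}{p^*}+\frac{\beta}{q^*}<1$ in \eqref{alphabeta} makes $F$ weakly continuous (via H\"older with conjugate $a,b$ and the compact embeddings into $L^{\alpha a}$, $L^{\beta b}$), so $F(u,v)=\lim F(u_n,v_n)\ge 0$. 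Finally weak lower semicontinuity of the Dirichlet integrals yields $\frac{1}{\lambda_1}\int|\nabla u|^p\le\sigma^*$ and $\frac{1}{\mu_1}\int|\nabla v|^q\le\sigma^*$, so $(u,v)$ is admissible and attains $\sigma^*$.

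For the ``moreover'' part, suppose toward a contradiction that a minimizer $(u,v)$ has $F(u,v)>0$. Replacing $u,v$ by $|u|,|v|$ leaves $F$ unchanged and does not increase the Dirichlet integrals, so I may assume $u,v\ge 0$ and, after normalization, $\int u^p=\int\varphi_1^p=1$, $\int v^q=\int\psi_1^q=1$, with $\varphi_1,\psi_1>0$. For $s\in[0,1]$ define the interpolants $u_s=((1-s)u^p+s\varphi_1^p)^{1/p}$ and $v_s=((1-s)v^q+s\psi_1^q)^{1/q}$. The hidden convexity of $w\mapsto\int|\nabla w^{1/p}|^p$ (the auxiliary result of the Appendix) gives $\int|\nabla u_s|^p\le (1-s)\int|\nabla u|^p+s\lambda_1$ and the analogue for $v_s$; since $\int u_s^p=\int v_s^q=1$ this reads $\frac{1}{\lambda_1}\int|\nabla u_s|^p\le (1-s)A_0+s$ and $\frac{1}{\mu_1}\int|\nabla v_s|^q\le (1-s)B_0+s$, where $A_0,B_0\le\sigma^*$ are the two original quotients.

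Because $\sigma^*>1$ by Proposition \ref{sigma1} and $A_0,B_0\ge 1$, one checks that $(1-s)A_0+s<\sigma^*$ and $(1-s)B_0+s<\sigma^*$ for every $s\in(0,1]$: the inequality is strict precisely because any term equal to $\sigma^*$ already exceeds $1$. Hence $\max\{\frac{1}{\lambda_1}\int|\nabla u_s|^p,\frac{1}{\mu_1}\int|\nabla v_s|^q\}<\sigma^*$ for all $s\in(0,1]$. On the other hand $s\mapsto F(u_s,v_s)$ is continuous (dominated convergence, using $\alpha<p^*$, $\beta<q^*$) with $F(u_0,v_0)=F(u,v)>0$, so $F(u_s,v_s)>0$ for all small $s>0$. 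Any such $(u_s,v_s)$ is then admissible with objective value strictly below $\sigma^*$, contradicting \eqref{extre}; therefore $F(u,v)=0$. The main obstacle is exactly this second stage: everything rests on the fact that sliding $u$ toward $\varphi_1$ through the substitution $w=u^p$ strictly lowers the Rayleigh quotient, which is the convexity proved in the Appendix, while $F(\varphi_1,\psi_1)<0$ enters only through $\sigma^*>1$, the strict gap that drives the decrease. The subsidiary technical point is the weak continuity of the indefinite functional $F$ under the mere subcriticality \eqref{alphabeta}.
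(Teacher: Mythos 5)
Your proof is correct, but its second stage takes a genuinely different route from the paper's. The paper handles the boundary property by a local perturbation argument: if $F(\bar u,\bar v)>0$, the constraint is inactive on a small product neighborhood $V_\epsilon\subset\{F>0\}$, and if the Rayleigh quotient of $\bar u$ could not be strictly decreased within $\pi_1(V_\epsilon)$ then $\bar u$ would be a local minimizer of the $p$-Rayleigh quotient, hence a first eigenfunction, which clashes with $\sigma^*>1$ (Proposition \ref{sigma1}); perturbing $u$ and $v$ separately then yields an admissible pair with both quotients strictly below $\sigma^*$, a contradiction. You instead build a global path toward $(\varphi_1,\psi_1)$, namely $u_s=((1-s)u^p+s\varphi_1^p)^{1/p}$, $v_s=((1-s)v^q+s\psi_1^q)^{1/q}$, and use convexity of the Dirichlet integral along such paths together with $\sigma^*>1$ to force a strict decrease of both quotients for every $s\in(0,1]$, while continuity of $s\mapsto F(u_s,v_s)$ preserves admissibility for small $s$. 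The two mechanisms are cousins --- the paper's step ``local minimum of the Rayleigh quotient $\Rightarrow$ first eigenfunction'' is itself normally proved via exactly this hidden convexity --- but your version is a quantitative, case-free implementation that avoids the local-minimum dichotomy and the Lagrange-multiplier flavored reasoning, at the price of importing the convexity inequality as an explicit ingredient. On that point one correction is needed: this inequality is \emph{not} ``the auxiliary result of the Appendix.'' The paper's Appendix proves Proposition \ref{uni}, a convergence statement for $\hat J_{\sigma,\omega}$, and contains nothing about $w\mapsto\int|\nabla w^{1/p}|^p$. The hidden convexity you invoke is true for nonnegative $W_0^{1,p}$ functions (it is the D\'iaz--Sa\'a/Benguria/Brasco--Franzina inequality), but it must be cited from the literature, not from this paper; with that citation fixed your argument is complete. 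Your existence stage (normalized minimizing sequence, compact embeddings, weak continuity of $F$ from $\frac{\alpha}{p^*}+\frac{\beta}{q^*}<1$, weak lower semicontinuity of the gradient terms) is precisely the ``standard minimization argument'' the paper leaves implicit.
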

\begin{proof} The existence follows from standard minimization arguments. Now, let $(\bar{u},\bar{v})\in W_0^{1,p}(\Omega)\times W_0^{1,q}(\Omega)$ satisfies \eqref{extre}. Without loss of generality we assume that $u\geq 0$ and $v\geq 0$. We want to show that $F(\bar{u},\bar{v})=0$. 
	
	Suppose by contradiction that $F(\bar{u},\bar{v})>0$. Thus 
	$$
	(\bar{u},\bar{v})\in\{(u,v)\in W_0^{1,p}(\Omega)\times W_0^{1,q}(\Omega): F(u,v)>0 \},
	$$ 
	that is an open set. For $\epsilon>0$ define
	$$
	V_\epsilon:=\{(u,v)\in W_0^{1,p}(\Omega)\times W_0^{1,q}(\Omega): \|u-\bar{u}\|_{1,p}<\epsilon \mbox{ and } \|v-\bar{v}\|_{1,q}<\epsilon \},
	$$
	and choose $\epsilon>0$ such that 
	$$
		V_\epsilon\subset \{(u,v)\in W_0^{1,p}(\Omega)\times W_0^{1,q}(\Omega): F(u,v)>0 \}.
	$$
	
	If
	\begin{equation}\label{sig u}
	 \frac{1}{\lambda_1}\frac{\int |\nabla \bar{u}|^p}{\int |\bar{u}|^p}\leq\frac{1}{\lambda_1}\frac{\int |\nabla u|^p}{\int |u|^p},
	\end{equation}
	 for all $u\in\pi_1(V_\epsilon)$, where $\pi_1$ is the projection over $W_0^{1,p}(\Omega)$, then  $\bar{u}\in W_0^{1,p}(\Omega)$ will be a local minimum for $\frac{1}{\lambda_1}\frac{\int |\nabla u|^p}{\int |u|^p}$ in $W_0^{1,p}(\Omega)$ and hence $\sigma^*=1$ which contradicts the Proposition \ref{sigma1}. Therefore there exists $u\in\pi_1(V_\epsilon)$ such that 
	 \begin{equation}\label{u pi1}
	 \frac{1}{\lambda_1}\frac{\int |\nabla u|^p}{\int |u|^p}< \frac{1}{\lambda_1}\frac{\int |\nabla \bar{u}|^p}{\int |\bar{u}|^p}\le \sigma^*.
	 \end{equation}
	 In a similar way there exists  $v\in\pi_2(V_\epsilon)$, where $\pi_2$ is the projection over $W_0^{1,q}(\Omega)$, such that 
	  \begin{equation}\label{v pi2}
	 \frac{1}{\mu_1}\frac{\int |\nabla v|^q}{\int |v|^q}<\frac{1}{\mu_1}\frac{\int |\nabla \bar{v}|^q}{\int |\bar{v}|^q} \leq\sigma^*.
	 \end{equation}
	 Thus $(u,v)\in V_\epsilon$ and
	 $$
	  \max\left\{\frac{1}{\lambda_1}\frac{\int |\nabla u|^p}{\int |u|^p},\frac{1}{\mu_1}\frac{\int |\nabla v|^q}{\int |v|^q}\right\}<\sigma^*,
	  $$
	  which is a contradiction and hence $F(\bar{u},\bar{v})=0$.
	  \end{proof}
	  From the Lemma \ref{extremal} we conclude   
	  \begin{cor}\label{fzero} There holds
	  	\begin{equation*}
	  	\sigma^*=\inf_{u,v}\left\{\max\left\{\frac{1}{\lambda_1}\frac{\int |\nabla u|^p}{\int |u|^p},\frac{1}{\mu_1}\frac{\int |\nabla v|^q}{\int |v|^q}\right\},\ F(u,v)= 0 \right\}.
	  	\end{equation*}
	  \end{cor}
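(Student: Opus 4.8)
The plan is to prove the two infima coincide by a simple sandwich argument, with all of the analytic content borrowed from Lemma \ref{extremal}. Write $\sigma_0$ for the quantity on the right-hand side of the corollary, namely
\begin{equation*}
\sigma_0=\inf_{u,v}\left\{\max\left\{\frac{1}{\lambda_1}\frac{\int |\nabla u|^p}{\int |u|^p},\frac{1}{\mu_1}\frac{\int |\nabla v|^q}{\int |v|^q}\right\}:\ F(u,v)= 0 \right\},
\end{equation*}
and recall that $\sigma^*$ is given by the identical expression in \eqref{extre} but subject to the weaker constraint $F(u,v)\ge 0$. The goal is to show $\sigma^*=\sigma_0$.

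First I would record the easy inequality $\sigma^*\le \sigma_0$. The admissible set $\{(u,v):F(u,v)= 0\}$ appearing in the definition of $\sigma_0$ is contained in the admissible set $\{(u,v):F(u,v)\ge 0\}$ used for $\sigma^*$. Since both infima are taken of the same functional, passing to the smaller admissible set can only raise the infimum, which gives $\sigma^*\le\sigma_0$ at once.

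For the reverse inequality $\sigma_0\le\sigma^*$ I would invoke Lemma \ref{extremal}. That lemma produces a pair $(u_0,v_0)$ which solves $\sigma^*$, meaning $F(u_0,v_0)\ge 0$ and the maximum of the two Rayleigh quotients at $(u_0,v_0)$ equals $\sigma^*$; moreover its second assertion forces $F(u_0,v_0)=0$. Hence $(u_0,v_0)$ is admissible in the definition of $\sigma_0$, and evaluating the functional there yields $\sigma_0\le\sigma^*$. Combining the two inequalities gives $\sigma^*=\sigma_0$, which is the claim.

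There is essentially no genuine obstacle in this argument: the only point requiring care is keeping straight which constraint set is the larger one, so that the inequality between infima points the correct way. Everything substantive—the existence of a minimizer realizing $\sigma^*$ and the fact that every such minimizer lies on the boundary locus $\{F=0\}$—has already been secured in Lemma \ref{extremal}, so the corollary is purely a bookkeeping consequence of restating that lemma.
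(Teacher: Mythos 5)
Your proof is correct and matches the paper's approach: the paper derives Corollary \ref{fzero} directly from Lemma \ref{extremal}, and your sandwich argument (inclusion of constraint sets for one inequality, the existence of a minimizer with $F=0$ from Lemma \ref{extremal} for the other) is exactly the reasoning the paper leaves implicit.
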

	  
	  Now we study some properties of the solutions of $\sigma^*$.
	  
	  \begin{lem} \label{extremequal} Assume that \eqref{alphabeta} is satisfied and $F(\varphi_1,\psi_1)<0$. If $f_1$ or $f_2$ are satisfied and $(\bar{u},\bar{v})$ solves $\sigma^*$, then 
	  	\begin{equation}\label{iq lev}
	  	\frac{1}{\lambda_1}\frac{\int |\nabla \bar{u}|^p}{\int |\bar{u}|^p}=\frac{1}{\mu_1}\frac{\int |\nabla \bar{v}|^q}{\int |\bar{v}|^q}.
	  	\end{equation}
	  	\end{lem}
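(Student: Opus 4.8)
The plan is to argue by contradiction and reduce everything to Lemma~\ref{extremal}. Write $\mathcal{R}_1(u)=\frac{1}{\lambda_1}\frac{\int|\nabla u|^p}{\int|u|^p}$ and $\mathcal{R}_2(v)=\frac{1}{\mu_1}\frac{\int|\nabla v|^q}{\int|v|^q}$, so that ``$(\bar u,\bar v)$ solves $\sigma^*$'' means $F(\bar u,\bar v)\ge0$ and $\sigma^*=\max\{\mathcal{R}_1(\bar u),\mathcal{R}_2(\bar v)\}$; as in Lemma~\ref{extremal} we may take $\bar u,\bar v\ge0$, and by that same lemma we already know $F(\bar u,\bar v)=0$. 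Since the two quotients play symmetric roles, it suffices to rule out the strict inequality $\mathcal{R}_1(\bar u)>\mathcal{R}_2(\bar v)$; so suppose it holds, whence $\sigma^*=\mathcal{R}_1(\bar u)$ and $\mathcal{R}_2(\bar v)<\sigma^*$.

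The key idea is that the strict slack in the second quotient lets me perturb $\bar v$ so as to push the constraint strictly into $\{F>0\}$ without raising the maximum above $\sigma^*$; the resulting pair then still solves $\sigma^*$ but violates the last assertion of Lemma~\ref{extremal}. Concretely, I would take $\tilde v=\bar v+t\,w$ with $t>0$ small and $w\in W_0^{1,q}(\Omega)$, $w\ge0$, supported in $\Omega^+\cap\{\bar u>0\}$. Since $\bar v\ge0$ and $s\mapsto s^\beta$ is increasing, on the support of $w$ one has $f\,\bar u^\alpha(\bar v+tw)^\beta\ge f\,\bar u^\alpha\bar v^\beta$, with strict inequality on $\{w>0,\ \bar u>0\}$, while off the support of $w$ nothing changes; hence $F(\bar u,\tilde v)>F(\bar u,\bar v)=0$. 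On the other hand $\tilde v\to\bar v$ in $W_0^{1,q}(\Omega)$ as $t\to0^+$, so by continuity of $\mathcal{R}_2$ we keep $\mathcal{R}_2(\tilde v)<\sigma^*$ for $t$ small. For such $t$ the pair $(\bar u,\tilde v)$ satisfies $F(\bar u,\tilde v)\ge0$ and $\max\{\mathcal{R}_1(\bar u),\mathcal{R}_2(\tilde v)\}=\mathcal{R}_1(\bar u)=\sigma^*$, i.e.\ it also solves $\sigma^*$; but $F(\bar u,\tilde v)>0$ contradicts Lemma~\ref{extremal}. This contradiction forces $\mathcal{R}_1(\bar u)=\mathcal{R}_2(\bar v)$.

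The one point that genuinely needs justification, and which I expect to be the main obstacle, is that the perturbation direction $w$ exists at all, i.e.\ that $\Omega^+\cap\{\bar u>0\}$ has positive measure, equivalently that $\bar u$ does not vanish a.e.\ on $\Omega^+$. This is exactly where the hypotheses $f_1$ or $f_2$ enter. The cleanest route is to show that any solution of $\sigma^*$ is strictly positive a.e.\ in $\Omega$: near the minimizer the strict inequality $\mathcal{R}_1>\mathcal{R}_2$ makes $\max\{\mathcal{R}_1,\mathcal{R}_2\}$ coincide with the smooth functional $\mathcal{R}_1(u)$, so $(\bar u,\bar v)$ locally minimizes $\mathcal{R}_1(u)$ subject to $F\ge0$; writing the associated Euler--Lagrange relation and using $\bar u\ge0$, the strong maximum principle for the $p$-Laplacian yields $\bar u>0$ a.e., and symmetrically $\bar v>0$ a.e., so that $|\Omega^+\cap\{\bar u>0\}|=|\Omega^+|>0$ because $f^+\not\equiv0$. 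Alternatively, one can treat the degenerate configuration directly: if $\bar u\equiv0$ on $\Omega^+$ then $F(\bar u,v)\le0$ for every $v$, the constraint becomes inactive for variations of $u$, and one shows — exactly as in the step ``a local minimizer of the Rayleigh quotient must be $c\varphi_1$'' used in the proof of Lemma~\ref{extremal} — that $\bar u$ reduces to a first eigenfunction, forcing $\sigma^*=1$ and contradicting Proposition~\ref{sigma1}. Either way the degenerate case is excluded and the perturbation argument of the previous paragraph applies.
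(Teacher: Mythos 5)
Your outer reduction is genuinely different from the paper's proof and, as pure logic, it is sound: if some perturbation $\tilde v=\bar v+tw$ keeps $\max\{\mathcal{R}_1(\bar u),\mathcal{R}_2(\tilde v)\}=\sigma^*$ while making $F(\bar u,\tilde v)>0$, this contradicts the last assertion of Lemma~\ref{extremal}. The paper instead chooses $\bar v$ extremal among all solutions, sets up the auxiliary problems \eqref{ex1}--\eqref{ex2}, proves that $DF(\bar u,\bar v)$ is surjective (this is where $f_1$/$f_2$, the regularity theory and V\'azquez's maximum principle on $\operatorname{int}(\Omega^0\cup\Omega^+)$ are consumed), and then applies the Lagrange multiplier rule to \eqref{extre} to force a vanishing multiplier. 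However, your proposal has a genuine gap exactly at the point you yourself flag -- the existence of the admissible direction $w$ -- and neither of your two routes closes it; both stumble on the same degenerate configuration that occupies most of the paper's proof, namely $DF(\bar u,\bar v)=0$, equivalently $\operatorname{supp}(\bar u\bar v)\subset\Omega^0$.

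Concretely: in route (a), ``writing the associated Euler--Lagrange relation'' for the problem $\min\{\mathcal{R}_1(u):F(u,\bar v)\ge 0\}$ requires a constraint qualification, namely $D_uF(\bar u,\bar v)\neq 0$. In the degenerate case $f\,\bar u^{\alpha-1}\bar v^{\beta}\equiv 0$ a.e.\ the constraint gradient vanishes, the Fritz--John conditions carry no information, no equation for $\bar u$ follows, and the maximum principle has nothing to act on; so route (a) silently assumes away precisely the case that hypotheses $f_1$/$f_2$ are designed to exclude, and indeed your argument never uses $f_1$ or $f_2$ in any concrete way. (A secondary issue: even granting a multiplier $\nu\ge 0$, controlling the term $\nu\alpha f\bar u^{\alpha-1}\bar v^{\beta}$ by $\bar u^{p-1}$, as V\'azquez's principle requires, uses $\alpha\ge p$, which \eqref{alphabeta} does not guarantee -- it allows $\alpha\le p$ with $\beta>q$.) In route (b), the claim that ``the constraint becomes inactive for variations of $u$'' is false: since $F(\bar u,\bar v)=0$, the point lies on the boundary of $\{F\ge 0\}$, and perturbations $u=\bar u+tw_1$ with $w_1\ge 0$ concentrated in $\Omega^-\cap\{\bar v>0\}$ make $F(u,\bar v)<0$; hence no neighborhood of $\bar u$ is contained in the constraint set, and constrained local minimality of $\mathcal{R}_1$ does not upgrade to free local minimality. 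The step of Lemma~\ref{extremal} you want to quote worked only because there $F(\bar u,\bar v)>0$, so an entire ball $V_\epsilon$ sat inside $\{F>0\}$; that is exactly what is unavailable here. Finally, a smaller but real technical point: $\Omega^+$ and $\{\bar u>0\}$ are merely measurable sets, so even if $|\Omega^+\cap\{\bar u>0\}|>0$ the existence of a nonzero $w\ge 0$ in $W_0^{1,q}(\Omega)$ vanishing a.e.\ off this set needs justification; the natural fix is to work inside the open set $\operatorname{int}(\Omega^0\cup\Omega^+)$, which again pulls in hypothesis $f_2$. So the reduction is a nice idea, but the heart of the lemma -- excluding the degenerate case via $f_1$/$f_2$ -- remains unproved in your proposal.
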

	  \begin{proof} Without loss of generality we assume $\bar{u}\geq 0$ and $\bar{v}\geq 0$ and that $\operatorname{int}(\Omega^0\cup \Omega^+)$ is  connected. First observe that for all $(w_1,w_2)\in W:= W_0^{1,p}(\operatorname{int}(\Omega^0\cup \Omega^+))\times W_0^{1,q}(\operatorname{int}(\Omega^0\cup \Omega^+))$ and $t,s\in \mathbb{R}$, there holds $F(\bar{u}+tw_1,\bar{v}+sw_2)\ge 0$. Suppose on the contrary that   \eqref{iq lev} is not true, let's say
	  	  \begin{equation}\label{abcdefg}
	  \sigma^*=\frac{1}{\lambda_1}\frac{\int |\nabla \bar{u}|^p}{\int |\bar{u}|^p}>\frac{1}{\mu_1}\frac{\int |\nabla \bar{v}|^q}{\int |\bar{v}|^q}.
	  \end{equation}

	  We suppose that $\bar{v}$ is choosen in such a way that 	  
	  \begin{equation*}
	  \frac{1}{\mu_1}\frac{\int |\nabla \bar{v}|^q}{\int |\bar{v}|^q}\le \frac{1}{\mu_1}\frac{\int |\nabla v|^q}{\int |v|^q},
	  \end{equation*}
	  for each pair $(u,v)$ of solutions of $\sigma^*$. It follows that $(\bar{u},\bar{v})$ solves the following problems
	  \begin{equation}\label{ex1}
	  \inf \left\{\frac{1}{\lambda_1}\frac{\int |\nabla u|^p}{\int |u|^p}: Q_{\sigma^*}(v)\le 0,\
	 \begin{matrix}
	  u=\bar{u}+t w_1\\ v=\bar{v}+sw_2
	 \end{matrix}, \  (w_1,w_2)\in W\right\},
	  \end{equation}
	  and
	  \begin{equation}\label{ex2}
	  \inf \left\{\frac{1}{\mu_1}\frac{\int |\nabla v|^q}{\int |v|^q}: P_{\sigma^*}(u)\le 0, \
	  \begin{matrix}
	  u=\bar{u}+t w_1\\ v=\bar{v}+sw_2
	  \end{matrix}, \ (w_1,w_2)\in W\right\}.
	  \end{equation}

	 From \eqref{abcdefg} we conclude that  the function 
	 \begin{equation*}
	 W_0^{1,p}(\Omega)\times W_0^{1,q}(\Omega)\ni (u,v)\mapsto\max\left\{\frac{1}{\lambda_1}\frac{\int |\nabla u|^p}{\int |u|^p},\frac{1}{\mu_1}\frac{\int |\nabla v|^q}{\int |v|^q}\right\},
	 \end{equation*}
	 is of class $C^1$ at  $(\bar{u},\bar{v})$. In order to apply the Lagrange's multiplier theorem to the problem $\sigma^*$, we need to show that $DF(\bar{u},\bar{v})$ is surjective. Suppose, ad absurdum, that $DF(\bar{u},\bar{v})$ is not surjective, then for each $(w,\overline{w})\in  W_0^{1,p}(\Omega)\times W_0^{1,q}(\Omega)$, there holds
	 \begin{equation*}
	 DF(\bar{u},\bar{v})(w,\overline{w})=\alpha\int f|\bar{u}|^{\alpha-2}\bar{u}w|\bar{v}|^\beta+\beta\int f|\bar{u}|^\alpha|\bar{v}|^{\beta-2}\bar{v}\overline{w}=0,
	 \end{equation*}
	 which implies that $\operatorname{supp}(\bar{u}\bar{v})\subset \Omega^0$. If $f_1$ is satisfied this is clearly impossible. Thus, assuming $f_2$, let us prove that this is a contradiction. In order to reach a contradiction, we will study the problems \eqref{ex1} and \eqref{ex2}.
	 
	  Indeed, we claim that  $D_uP_{\sigma^*}(\bar{u})$ is surjective in $W$ (which will give us a contradiction). Suppose on the contrary that $D_uP_{\sigma^*}(\bar{u})=0$ in $W$ . Combining the regularity results of \cite{Lieberman92} and the maximum principle of \cite{Vazquez84},  we obtain that $\bar{u}>0$ in $\operatorname{int}(\Omega^0\cup \Omega^+)$ and since $\operatorname{supp}(\bar{u}\bar{v})\subset \Omega^0$ we conclude that $\bar{v}=0$ over $\operatorname{int}( \Omega^+)$. We apply the maximum principle again to conclude  that $D_vQ_{\sigma^*}(\bar{v})$ is surjective and by using the Lagrange's multiplier theorem in \eqref{ex1}, we can find $\nu\neq 0$ such that 
	 \begin{equation*}
	 D_u\left(\frac{1}{\lambda_1}\frac{\int |\nabla \bar{u}|^p}{\int |\bar{u}|^p}\right)=\nu D_vQ_{\sigma^*}(\bar{v}),\ \mbox{in}\ W,
	 \end{equation*}
	  which implies that $ D_vQ_{\sigma^*}(\bar{v})=0$ in $W$ and this is a contradiction. It follows that $D_uP_{\sigma^*}(u)$ is surjective and applying the Lagrange's multiplier theorem on \eqref{ex2}, there exists   $\nu\neq 0$ such that 
	  
	  \begin{equation*}
	  D_v\left(\frac{1}{\mu_1}\frac{\int |\nabla \bar{v}|^p}{\int |\bar{v}|^q}\right)=\nu D_uP_{\sigma^*}(\bar{u}),\ \mbox{in}\ W,
	  \end{equation*}
	 however arguing as above we reach a contradiction.
	 
	  Therefore $DF(u,v)$ is surjective and from the Lagrange's multiplier theorem applied to the minimization problem $\sigma^*$, there exists $\nu\neq 0$ such that for each $(w,\overline{w})\in  W_0^{1,p}(\Omega)\times W_0^{1,q}(\Omega)$, there holds
	 
	 \begin{equation*}
	 \frac{p}{\lambda_1}\frac{\int |\bar{u}|^p\int |\nabla \bar{u}|^{p-2}\nabla \bar{u}\nabla w-\int |\nabla \bar{u}|^p \int |\bar{u}|^{p-2}\bar{u} w}{\left(\int |\bar{u}|^p\right)^2}=\nu DF(\bar{u},\bar{v})(w,\overline{w}).
	 \end{equation*}	 
	 It follows that $\nu=0$ which is a contradiction and hence \eqref{iq lev} holds true.	  
	 \end{proof}

Define 

 \begin{equation*}
 \lambda^*=\sigma^*\lambda_1,\ \mu^*=\sigma^*\mu_1.
 \end{equation*}

\begin{cor}\label{solution} Assume that \eqref{alphabeta} is satisfied and $F(\varphi_1,\psi_1)<0$. If $f_1$ or $f_2$ are satisfied and $(\bar{u},\bar{v})$ solves $\sigma^*$, then there exists $t,s>0$ such that $u=t\bar{u}$ and $v=s\bar{v}$ satisfies 
	
	\begin{equation*}
	\left\{
	\begin{aligned}
	-\Delta_p u &= \lambda^* |u|^{p-2}u+f\alpha |u|^{\alpha-2}|v|^\beta u 
	&\mbox{in}\ \ \Omega, \nonumber\\ 
	-\Delta_q v &= \mu^* |v|^{q-2}v+f\beta |u|^{\alpha}|v|^{\beta-2}v
	&\mbox{in}\ \ \Omega.
	\end{aligned}
	\right.
	\end{equation*}
\end{cor}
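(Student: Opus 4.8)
The plan is to read off the two equations from a Lagrange multiplier rule for the constrained minimization \eqref{extre} at the solution $(\bar u,\bar v)$, and then to fix the scaling factors $t,s>0$ so that the multipliers match. Assume as usual $\bar u,\bar v\ge 0$. By Lemma \ref{extremal} we have $F(\bar u,\bar v)=0$, and by Lemma \ref{extremequal} the two Rayleigh quotients coincide, both being equal to $\sigma^*$; in particular $\int|\nabla\bar u|^p=\lambda^*\int|\bar u|^p$ and $\int|\nabla\bar v|^q=\mu^*\int|\bar v|^q$. Writing $R_1(u)=\frac{1}{\lambda_1}\frac{\int|\nabla u|^p}{\int|u|^p}$ and $R_2(v)=\frac{1}{\mu_1}\frac{\int|\nabla v|^q}{\int|v|^q}$, a short computation that uses these identities gives, for $w\in W_0^{1,p}(\Omega)$,
\begin{equation*}
DR_1(\bar u)(w)=\frac{p}{\lambda_1\int|\bar u|^p}\left(\int|\nabla\bar u|^{p-2}\nabla\bar u\cdot\nabla w-\lambda^*\int|\bar u|^{p-2}\bar u\,w\right),
\end{equation*}
and an analogous formula for $DR_2(\bar v)$ involving $\mu^*$.

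The main point is that, precisely because $R_1(\bar u)=R_2(\bar v)$, the functional $(u,v)\mapsto\max\{R_1(u),R_2(v)\}$ is not differentiable at $(\bar u,\bar v)$, so the smooth Lagrange argument of Lemma \ref{extremequal} (carried out there under the strict inequality \eqref{abcdefg}) must be replaced by its nonsmooth counterpart. Since $R_1$ depends only on $u$ and $R_2$ only on $v$, the Clarke subdifferential of the max at $(\bar u,\bar v)$ is $\{(\theta\,DR_1(\bar u),(1-\theta)\,DR_2(\bar v)):\theta\in[0,1]\}$. Because $DF(\bar u,\bar v)\neq 0$ — equivalently $\operatorname{supp}(\bar u\bar v)\not\subset\Omega^0$, which is excluded under $f_1$ or $f_2$ exactly by the support analysis in the proof of Lemma \ref{extremequal} (the one ingredient that I would re-verify in the present equality setting) — the active constraint $F\ge0$ satisfies a constraint qualification, and the Karush–Kuhn–Tucker rule yields $\theta\in[0,1]$ and $\nu\ge 0$ with
\begin{equation*}
\theta\,DR_1(\bar u)=\nu\,D_uF(\bar u,\bar v),\qquad (1-\theta)\,DR_2(\bar v)=\nu\,D_vF(\bar u,\bar v).
\end{equation*}

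Next I would show that the multipliers are nondegenerate, namely $\nu>0$ and $\theta\in(0,1)$. If $\nu=0$, then $\theta\,DR_1(\bar u)=0$ and $(1-\theta)\,DR_2(\bar v)=0$; in every case this forces $\bar u$ or $\bar v$ to be a first eigenfunction, hence $R_1(\bar u)=1$ or $R_2(\bar v)=1$, contradicting $\sigma^*>1$ (Proposition \ref{sigma1}). Thus $\nu>0$. If $\theta=0$ (resp. $\theta=1$) then $D_uF(\bar u,\bar v)=0$ (resp. $D_vF(\bar u,\bar v)=0$); since $\bar u,\bar v\ge0$, each of these amounts to $\operatorname{supp}(\bar u\bar v)\subset\Omega^0$, hence to $DF(\bar u,\bar v)=0$, again a contradiction. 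Therefore $\theta\in(0,1)$.

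Finally, substituting the formula for $DR_1(\bar u)$ (and its analogue) into the two multiplier identities turns them, after dividing by $\theta$ and $1-\theta$, into exactly the weak forms of the two equations of the system for $u=t\bar u$, $v=s\bar v$: using the $(p-1)$- and $(q-1)$-homogeneity of $-\Delta_p$ and $-\Delta_q$ and dividing those equations by $t^{p-1}$ and $s^{q-1}$, the nonlinear coefficients become $t^{\alpha-p}s^\beta$ and $t^{\alpha}s^{\beta-q}$, so it remains to solve
\begin{equation*}
t^{\alpha-p}s^\beta=\frac{\nu\lambda_1\int|\bar u|^p}{p\,\theta},\qquad t^{\alpha}s^{\beta-q}=\frac{\nu\mu_1\int|\bar v|^q}{q\,(1-\theta)},
\end{equation*}
whose right-hand sides are positive because $\nu>0$ and $\theta\in(0,1)$. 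Taking logarithms converts this into a linear system whose determinant is $(\alpha-p)(\beta-q)-\alpha\beta=-pq\,d\neq0$, since $d>0$ by \eqref{alphabeta}; hence it has a unique solution, which yields $t,s>0$ and completes the proof. The only delicate point is the nonsmoothness of the max at $(\bar u,\bar v)$, which is what makes both equations appear simultaneously through the convex-combination multiplier $\theta$, and which requires the verification that $\theta$ lies in the open interval $(0,1)$.
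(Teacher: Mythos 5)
Your proposal is correct, and it reaches the same multiplier identities as the paper, but by a genuinely different route. The paper sidesteps the non-differentiability of $(u,v)\mapsto\max\{R_1(u),R_2(v)\}$ at $(\bar{u},\bar{v})$ --- the very point you emphasize --- by first reformulating, via Corollary \ref{fzero} and Lemma \ref{extremequal}, the extremal problem as the \emph{smooth} equality-constrained problem $\sigma^*=\inf\{R_1(u):\ Q_{\sigma^*}(v)=0,\ F(u,v)=0\}$, and then applying the classical Lagrange multiplier theorem; the $u$- and $v$-components of the resulting identity $DR_1(\bar{u})=\nu_1 D_vQ_{\sigma^*}(\bar{v})+\nu_2 DF(\bar{u},\bar{v})$ yield the two equations with constants $c_1,c_2\neq 0$, after which the paper finishes with ``a suitable change of variables.'' You instead attack the original min--max formulation head-on with the Clarke subdifferential and the nonsmooth KKT rule, which is where your convex-combination parameter $\theta$ comes from. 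What the paper's route buys is that it never leaves smooth Lagrange multiplier theory; what your route buys is sign information: the inequality-constrained KKT rule gives $\nu\ge 0$, and your nondegeneracy step ($\nu>0$ and $\theta\in(0,1)$, via Proposition \ref{sigma1} and the support analysis) makes the coefficients in front of the nonlinearities positive, so the $2\times 2$ log-linear system for $(t,s)$, with determinant $(\alpha-p)(\beta-q)-\alpha\beta=-pqd\neq 0$, is solvable with $t,s>0$. That positivity is precisely what the paper's ``suitable change of variables'' needs but never verifies --- $\nu_1,\nu_2\neq 0$ alone does not exclude a negative coefficient --- so on this point your argument is actually more complete than the paper's. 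Finally, both proofs rest on the same deferred ingredient, the constraint qualification $DF(\bar{u},\bar{v})\neq 0$ (equivalently, ruling out $\operatorname{supp}(\bar{u}\bar{v})\subset\Omega^0$ under $f_1$ or $f_2$): the paper invokes ``arguing as in the proof of Lemma \ref{extremequal}'' and you defer to the same support analysis, correctly flagging that it must be re-checked in the equality setting $R_1(\bar{u})=R_2(\bar{v})$; you are on equal footing with the paper there.
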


\begin{proof} Indeed, observe from the Corollary \ref{fzero} and Lemma \eqref{extremequal} that
	
	\begin{equation*}
	\sigma^*=\inf\left\{\frac{1}{\lambda_1}\frac{\int |\nabla u|^p}{\int |u|^p}:\ Q_{\sigma^*}(v)=0,\ F(u,v)=0\right\}.
	\end{equation*}
	
Arguing as in the proof of the Lemma \ref{extremequal}, we conclude from the Lagrange's multiplier theorem that there exists $\nu_1,\nu_2\neq 0$ such that

\begin{equation*}
D_u\left(\frac{1}{\lambda_1}\frac{\int |\nabla \bar{u}|^p}{\int |\bar{u}|^p}\right)=\nu_1 D_vQ_{\sigma^*}(\bar{v})+\nu_2DF(\bar{u},\bar{v}).
\end{equation*}  	
	
It follows that there exists constants $c_1,c_2\neq 0$ such that

	\begin{equation*}
\left\{
\begin{aligned}
-\Delta_p \bar{u} &= \lambda^* |\bar{u}|^{p-2}\bar{u}+c_1\alpha f|\bar{u}|^{\alpha-2}|\bar{v}|^\beta \bar{u}
&\mbox{in}\ \ \Omega, \nonumber\\ 
-\Delta_q \bar{v} &= \mu^* |\bar{v}|^{q-2}\bar{v}+c_2\beta f|\bar{u}|^{\alpha}|\bar{v}|^{\beta-2}\bar{v}
&\mbox{in}\ \ \Omega.
\end{aligned}
\right.
\end{equation*}

The proof can be completed with a suitable change of variables.	
\end{proof}

\section{Extremal parameters Curve} \label{Sec}

In this section we study the extremal parameters curve to the problem \eqref{pq}. This curve determines a threshold for the applicability of the Nehari manifold method. In fact, as we will see, if the parameter $\sigma$ is above the curve, then $\hat{J}_\sigma=-\infty$, while if it is under the curve then $\hat{J}_\sigma<-\infty$. For $\lambda\in(\lambda_1,\lambda^*]$ and $\mu\in(\mu_1,\mu^*]$ we consider the following functions

\begin{equation*}
\mu_{ext}(\lambda)=\inf\left\{\frac{1}{\mu_1}\frac{\int |\nabla v|^q}{\int |v|^q}:\  P_\lambda(u)\le 0,\ F(u,v)\ge 0 \right\},
\end{equation*}
and
\begin{equation*}
\lambda_{ext}(\mu)=\inf\left\{\frac{1}{\lambda_1}\frac{\int |\nabla u|^p}{\int |u|^p}:\  Q_\mu(v)\le 0,\ F(u,v)\ge 0 \right\}.
\end{equation*}

We prove the following

\begin{lem}\label{extrecurvepa} Assume that \eqref{alphabeta} is satisfied and $F(\varphi_1,\psi_1)<0$. If $f_1$ or $f_2$ are satisfied, then for each $(\lambda,\mu)\in (\lambda_1,\lambda^*]\times (\mu_1,\mu^*]$ there holds $\mu^*< \mu_{ext}(\lambda)<\infty$, $\lambda^*< \lambda_{ext}(\mu)<\infty$ and $\mu_{ext}(\lambda^*)=\lambda_{ext}(\mu^*)=\sigma^*$. Furthermore
	
	\begin{description}
		\item[i)] the problem $\mu_{ext}(\lambda)$ has a minimizer $(u,v)$. Moreover, any minimizer $(u,v)$ of  $\mu_{ext}(\lambda)$ satisfies $F(u,v)=0$, $P_\lambda(u)=0$ and there exists $t,s>0$ such that $(tu,sv)$ solves \eqref{pq};
		\item[ii)] the function $\mu_{ext}$ is continuous and decreasing;
		\item[iii)] assume that $\sigma=(\lambda,\mu)$ satisfies $\lambda\in (\lambda_1,\lambda^*)$ and $\mu^*\le \mu<\mu_{ext}(\lambda)$. Then $P_\lambda(u)\le 0$ and $Q_\mu(v)\le 0$ implies that $F(u,v)<0$.	
		\item[iv)] suppose that $\sigma=(\lambda,\mu)$ satisfies $\lambda\in (\lambda_1,\lambda^*]$ and $\mu_{ext}(\lambda)<\mu$. Then, $\hat{J}_\sigma=-\infty$;
			\item[v)] the problem $\lambda_{ext}(\mu)$ has a minimizer $(u,v)$. Moreover, any minimizer $(u,v)$ of  $\lambda_{ext}(\mu)$ satisfies $F(u,v)=0$, $Q_\mu(v)=0$ and there exists $t,s>0$ such that $(tu,sv)$ solves \eqref{pq};
		\item[vi)] the function $\lambda_{ext}$ is continuous and decreasing;
		\item[vii)] assume that $\sigma=(\lambda,\mu)$ satisfies $\mu\in (\mu_1,\mu^*)$ and $\lambda^*\le \lambda<\lambda_{ext}(\mu)$. Then $P_\lambda(u)\le 0$ and $Q_\mu(v)\le 0$ implies that $F(u,v)<0$.	
		\item[viii)] suppose that $\sigma=(\lambda,\mu)$ satisfies $\mu\in (\mu_1,\mu^*]$ and $\lambda_{ext}(\mu)<\lambda$. Then, $\hat{J}_\sigma=-\infty$;
	\end{description}

\end{lem}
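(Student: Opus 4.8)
The plan is to prove all statements about $\mu_{ext}$ in detail and to obtain those for $\lambda_{ext}$ (items v)--viii)) by the symmetric argument, interchanging $(p,\lambda,\lambda_1,\varphi_1,P_\lambda)$ with $(q,\mu,\mu_1,\psi_1,Q_\mu)$. It is convenient to set $R_1(u)=\frac{1}{\lambda_1}\frac{\int|\nabla u|^p}{\int|u|^p}$ and $R_2(v)=\frac{1}{\mu_1}\frac{\int|\nabla v|^q}{\int|v|^q}$, so that $P_\lambda(u)\le 0\Leftrightarrow R_1(u)\le\lambda/\lambda_1$, $Q_\mu(v)\le 0\Leftrightarrow R_2(v)\le\mu/\mu_1$, and the admissible set of $\mu_{ext}(\lambda)$ is $\{(u,v):R_1(u)\le\lambda/\lambda_1,\ F(u,v)\ge0\}$; all comparisons below are between $R_2(v)$ and $\mu_{ext}(\lambda)$, which sidesteps the normalization mismatch between $\sigma$- and $\mu$-values. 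Finiteness is obtained by exhibiting one admissible pair: take $u=\varphi_1$, so $P_\lambda(\varphi_1)=(\lambda_1-\lambda)\|\varphi_1\|_p^p<0$, and, using $f^+\not\equiv0$, a $v\ne0$ with $F(\varphi_1,v)>0$, giving $\mu_{ext}(\lambda)\le R_2(v)<\infty$. For the lower bound I use the definition of $\sigma^*$ with Lemma \ref{extremequal}: any admissible $(u,v)$ has $\max\{R_1(u),R_2(v)\}\ge\sigma^*$; for $\lambda<\lambda^*$ one has $R_1(u)\le\lambda/\lambda_1<\sigma^*$, forcing $R_2(v)\ge\sigma^*$, and $R_2(v)=\sigma^*$ would make $(u,v)$ solve $\sigma^*$ with $R_1(u)<R_2(v)$, contradicting Lemma \ref{extremequal}; hence $\mu_{ext}(\lambda)>\sigma^*$. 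At $\lambda=\lambda^*$ the same dichotomy gives $\mu_{ext}(\lambda^*)\ge\sigma^*$, while a solver of $\sigma^*$ (which has $R_1=R_2=\sigma^*$ and $F=0$ by Lemmas \ref{extremal} and \ref{extremequal}) is admissible and yields $\mu_{ext}(\lambda^*)\le\sigma^*$, so $\mu_{ext}(\lambda^*)=\sigma^*$; by symmetry $\lambda_{ext}(\mu^*)=\sigma^*$.

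For item i), existence is the direct method. Normalizing a minimizing sequence by $\|u_n\|_p=\|v_n\|_q=1$, the constraint $P_\lambda(u_n)\le0$ and $R_2(v_n)\to\mu_{ext}(\lambda)$ bound the gradients; passing to subsequences $u_n\rightharpoonup u$, $v_n\rightharpoonup v$ with $u_n\to u$ in $L^p$ and $v_n\to v$ in $L^q$, so $\|u\|_p=\|v\|_q=1$ and $u,v\ne0$. Weak lower semicontinuity gives $P_\lambda(u)\le0$ and $R_2(v)\le\mu_{ext}(\lambda)$, and the strict subcriticality $\frac{\alpha}{p^*}+\frac{\beta}{q^*}<1$ makes $F$ sequentially continuous (generalized H\"older plus compact Sobolev embeddings), so $F(u,v)\ge0$ and $(u,v)$ is a minimizer. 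Replacing $u,v$ by $|u|,|v|$ I may assume $u,v\ge0$. If $F(u,v)>0$, then since $\mu_{ext}(\lambda)>\sigma^*>1$ the nonnegative $v$ is not a multiple of $\psi_1$, hence not a critical point of $R_2$; a descent direction $w$ for $R_2$ keeps $F(u,v+tw)>0$ by continuity and strictly lowers $R_2$, contradicting minimality, so $F(u,v)=0$. If $P_\lambda(u)<0$, then either $D_uF(u,v)\ne0$---in which case $u+tw$ preserves $P_\lambda<0$ and gives $F(u+tw,v)>0$, reducing to the slack case just excluded---or $D_uF(u,v)=0$, forcing $\operatorname{supp}(uv)\subset\Omega^0$, which is impossible under $f_1$ and is excluded under $f_2$ by the Lieberman \cite{Lieberman92} regularity and the V\'azquez \cite{Vazquez84} strong maximum principle exactly as in Lemma \ref{extremequal}; hence $P_\lambda(u)=0$. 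With both constraints active, $(u,v)$ minimizes $R_2(v)$ subject to $P_\lambda(u)=0$ and $F(u,v)=0$, and the Lagrange multiplier theorem (with nonvanishing multipliers shown as in Lemma \ref{extremequal} and Corollary \ref{solution}) together with the scaling $u\mapsto tu$, $v\mapsto sv$ allowed by $d>0$ produces $t,s>0$ with $(tu,sv)$ solving \eqref{pq} at $\mu=\mu_1\mu_{ext}(\lambda)$.

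For item ii), monotonicity is immediate: if $\lambda<\lambda'$ then $P_{\lambda'}(u)\le0$ is weaker than $P_\lambda(u)\le0$, so the admissible set grows and $\mu_{ext}(\lambda')\le\mu_{ext}(\lambda)$; strictness follows because a minimizer at $\lambda'$ has $P_{\lambda'}(u')=0$, whence $P_\lambda(u')=(\lambda'-\lambda)\|u'\|_p^p>0$, so equality of the two infima is incompatible with the $P_\lambda=0$, $F=0$ characterization of minimizers. Continuity uses the compactness from item i): along $\lambda_n\to\lambda_0$ the minimizers converge (after normalization) to an admissible pair at $\lambda_0$, giving $\limsup_n\mu_{ext}(\lambda_n)\le\mu_{ext}(\lambda_0)$, which with monotonicity settles one side; the other side follows by feeding the minimizer at $\lambda_0$ through a controlled perturbation that realizes the tightened constraint, so that both one-sided limits equal $\mu_{ext}(\lambda_0)$.

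For item iii), suppose toward a contradiction that $P_\lambda(u)\le0$, $Q_\mu(v)\le0$ and $F(u,v)\ge0$. Then $(u,v)$ is admissible for $\mu_{ext}(\lambda)$, so $R_2(v)\ge\mu_{ext}(\lambda)$; but $Q_\mu(v)\le0$ gives $R_2(v)\le\mu/\mu_1<\mu_{ext}(\lambda)$ since $\mu$ lies strictly below the curve, a contradiction, whence $F(u,v)<0$. For item iv), $\mu$ lies strictly above the curve, i.e.\ $\mu/\mu_1>\mu_{ext}(\lambda)$, so there is an admissible pair $(u_0,v_0)$ with $R_2(v_0)<\mu/\mu_1$, that is $P_\lambda(u_0)\le0$, $Q_\mu(v_0)<0$, $F(u_0,v_0)\ge0$; perturbing first so that $P_\lambda(u_0)<0$ and then moving $v_0$ in a direction lowering $F$ (available because $f^-\not\equiv0$) yields $(u_0,v_t)\in\Theta_\sigma$ with $F(u_0,v_t)\to0^-$ while $|P_\lambda(u_0)|$ and $|Q_\mu(v_t)|$ stay bounded away from $0$. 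By \eqref{J1} the ratio $|P_\lambda|^{\alpha/(pd)}|Q_\mu|^{\beta/(qd)}/|F|^{1/d}\to+\infty$, so $J_\sigma(u_0,v_t)\to-\infty$ and $\hat J_\sigma=\inf_{\Theta_\sigma}J_\sigma=-\infty$.

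Items v)--viii) are the statements for $\lambda_{ext}$ and follow verbatim from the four arguments above after the indicated interchange of roles. The main obstacle is item i): proving $P_\lambda(u)=0$ and extracting a genuine solution of \eqref{pq}, because ruling out $\operatorname{supp}(uv)\subset\Omega^0$ under the degenerate hypothesis $f_2$ requires the Lieberman regularity theory and the V\'azquez strong maximum principle, reproducing the delicate surjectivity and nonvanishing-of-multipliers analysis of Lemma \ref{extremequal}. The continuity in item ii) is the second delicate point, since one of the one-sided limits needs a constraint-perturbation argument rather than pure compactness.
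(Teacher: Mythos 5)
Your overall architecture is the paper's own: the Lemma is assembled there from Proposition \ref{minimiproblems} (finiteness, lower bounds, item \textbf{i)}), Proposition \ref{extecurvepro} (item \textbf{ii)}), Corollary \ref{solutions} (items \textbf{iii)}, \textbf{vii)}) and Corollary \ref{unbounini} (items \textbf{iv)}, \textbf{viii)}), and for the first three blocks your arguments (direct method, the dichotomy with $\sigma^*$ via Lemma \ref{extremequal}, the Lagrange multiplier and maximum-principle analysis under $f_1$ or $f_2$, compactness plus a strict competitor for continuity) coincide with the paper's. Two slips there should be repaired. First, the strict inequality $\mu_{ext}(\lambda)>\sigma^*$ cannot be deduced from ``every admissible pair has Rayleigh quotient $>\sigma^*$'': an infimum of quantities each exceeding $\sigma^*$ may still equal $\sigma^*$. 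You need attainment first (your item \textbf{i)}, which only uses the non-strict bound $\ge\sigma^*$) and then run the dichotomy on the minimizer. Second, strict monotonicity must be argued with the minimizer at the \emph{smaller} parameter: if $\lambda<\lambda'$ and $\mu_{ext}(\lambda)=\mu_{ext}(\lambda')$, then the minimizer $(u,v)$ at $\lambda$ is admissible and optimal for $\lambda'$, hence by \textbf{i)} must satisfy $P_{\lambda'}(u)=0$, contradicting $P_{\lambda'}(u)<P_\lambda(u)=0$; your observation that the minimizer at $\lambda'$ fails to be admissible at $\lambda$ does not by itself exclude equality of the two infima.

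The genuine gap is item \textbf{iv)} (and hence \textbf{viii)}). Your construction starts from an admissible pair $(u_0,v_0)$ with $P_\lambda(u_0)\le 0$, $Q_\mu(v_0)<0$, $F(u_0,v_0)\ge 0$ and tries to produce by hand a sequence in $\Theta_\sigma$ with $F\to 0^-$ while $|P_\lambda|,|Q_\mu|$ stay bounded away from zero; three steps fail. (a) There is no slack in $P_\lambda$: nothing prevents $P_\lambda(u_0)=0$, and then an infinitesimal perturbation keeps $|P_\lambda|$ infinitesimal (making the quotient in \eqref{J1} indeterminate), while a fixed-size perturbation loses control of the sign and size of $F$. (b) ``Moving $v_0$ in a direction lowering $F$'' need not let $F$ cross zero: if $u_0$ vanishes a.e.\ on $\Omega^-$ (admissibility does not forbid this), then $f|u_0|^\alpha\ge 0$ a.e.\ and $F(u_0,v)\ge 0$ for \emph{every} $v$. (c) Even when $F$ can be made negative, nothing guarantees that $Q_\mu(v_t)$ is still negative, and bounded away from zero, at the moment $F$ changes sign. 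The paper closes \textbf{iv)} differently (Proposition \ref{unboun} together with Corollary \ref{unbounini}), using exactly the two items you have already proved: by continuity and monotonicity of $\mu_{ext}$ (item \textbf{ii)}) pick a point $\sigma_0=(\lambda_0,\mu_0)\in\gamma^*$ with $\lambda_0<\lambda$ and $\mu_0<\mu$ strictly in both coordinates, and take a minimizer $(u,v)$ there, which by item \textbf{i)} solves \eqref{pq} at $\sigma_0$ after scaling and satisfies $P_{\lambda_0}(u)=Q_{\mu_0}(v)=F(u,v)=0$. The equations identify $D_uF(u,v)$ and $D_vF(u,v)$ with positive multiples of the functionals $DP_{\lambda_0}(u)$ and $DQ_{\mu_0}(v)$, which are nonzero because a nonnegative minimizer cannot be an eigenfunction for $\lambda_0>\lambda_1$ or $\mu_0>\mu_1$; hence there is $(w,\bar w)$ with $DF(u,v)(w,\bar w)<0$. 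Since $\lambda>\lambda_0$ and $\mu>\mu_0$ provide the strict slack $P_\lambda(u)<0$, $Q_\mu(v)<0$, the pairs $(u,v)+t(w,\bar w)$ with $t\to 0^+$ have $F<0$ tending to zero while $P_\lambda$ and $Q_\mu$ remain below a fixed negative constant, and \eqref{J1} (which is scale invariant in each component) yields $\hat{J}_\sigma=-\infty$. Without this detour through the solution on the extremal curve, your item \textbf{iv)} does not close.
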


The proof of the Lemma \ref{extrecurvepa} will be a consequence of several results. Arguing as in the Corollary \ref{fzero}, Lemma \ref{extremequal} and Corollary \ref{solution}, one can prove the following

\begin{prop}\label{minimiproblems} For each $(\lambda,\mu)\in (\lambda_1,\lambda^*]\times (\mu_1,\mu^*]$ there holds $\mu^*< \mu_{ext}(\lambda)<\infty$, $\lambda^*< \lambda_{ext}(\mu)<\infty$ and $\mu_{ext}(\lambda^*)=\lambda_{ext}(\mu^*)=\sigma^*$. Moreover, 	
\begin{equation*}
\mu_{ext}(\lambda)=\inf\left\{\frac{1}{\mu_1}\frac{\int |\nabla v|^q}{\int |v|^q}:\  P_\lambda(u)= 0,\ F(u,v)= 0 \right\},
\end{equation*}
and
\begin{equation*}
\lambda_{ext}(\mu)=\inf\left\{\frac{1}{\lambda_1}\frac{\int |\nabla u|^p}{\int |u|^p}:\  Q_\mu(v)= 0,\ F(u,v)= 0 \right\}.
\end{equation*}

Furthermore, there exists $(u,v)$ such that $P_\lambda(u)= 0$, $F(u,v)=0$ and 
\begin{equation*}
\frac{1}{\mu_1}\frac{\int |\nabla v|^q}{\int |v|^q}=\mu_{ext}(\lambda),
\end{equation*}
and if $(u,v)$ solves $\mu_{ext}(\lambda)$, then $P_\lambda(u)= 0$, $F(u,v)=0$ and there exists $t,s>0$ such that $(tu,sv)$ is a solution of \eqref{pq}.

Also, there exists $(u',v')$ such that $Q_{\mu}(v')= 0$, $F(u',v')=0$ and 
\begin{equation*}
\frac{1}{\lambda_1}\frac{\int |\nabla u'|^p}{\int |u'|^p}=\lambda_{ext}(\mu),
\end{equation*}
and if $(u',v')$ solves $\lambda_{ext}(\mu)$, then $Q_\mu(v')=F(u',v')=0$ and there exists $t,s>0$ such that $(tu',sv')$ is a solution of \eqref{pq}.

\end{prop}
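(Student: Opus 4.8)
The plan is to carry out the argument in detail for the problem defining $\mu_{ext}(\lambda)$; the assertions for $\lambda_{ext}(\mu)$ then follow verbatim after interchanging the roles of the two equations. Throughout I write $R_p(u)=\int|\nabla u|^p/\int|u|^p$ and $R_q(v)=\int|\nabla v|^q/\int|v|^q$, so that $P_\lambda(u)\le 0\iff R_p(u)\le\lambda$ and the minimized quantity is $R_q(v)/\mu_1$. First I would obtain a minimizer by the direct method. Since the objective and the constraints $P_\lambda(u)\le 0$, $F(u,v)\ge 0$ are each separately homogeneous in $u$ and in $v$, I normalize a minimizing sequence by $\|u_n\|_{1,p}=\|v_n\|_{1,q}=1$ and pass to weak limits $(\bar u,\bar v)$. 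The constraint $P_\lambda(u_n)\le 0$ forces $\int|u_n|^p\ge 1/\lambda$, and the boundedness of $R_q(v_n)$ keeps $\int|v_n|^q$ bounded below; because \eqref{alphabeta} makes $\alpha,\beta$ subcritical, the relevant embeddings are compact, whence $\bar u,\bar v\neq 0$, the constraints $P_\lambda(\bar u)\le 0$ and $F(\bar u,\bar v)\ge 0$ pass to the limit, and weak lower semicontinuity yields $R_q(\bar v)/\mu_1\le\mu_{ext}(\lambda)$. Finiteness of $\mu_{ext}(\lambda)$ follows by exhibiting one admissible pair: $u=\varphi_1$ gives $P_\lambda(\varphi_1)=(\lambda_1-\lambda)\int|\varphi_1|^p<0$ for $\lambda>\lambda_1$, and any $v$ supported in $\Omega^+$ gives $F(\varphi_1,v)>0$.

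Next I would derive the bounds and the reformulation with equality constraints. For any admissible pair one has $R_p(u)/\lambda_1\le\lambda/\lambda_1\le\sigma^*$, so the definition of $\sigma^*$ (which applies since $F\ge 0$) forces $R_q(v)/\mu_1\ge\sigma^*$; hence $\mu_{ext}(\lambda)\ge\sigma^*>1$ by Proposition \ref{sigma1}. Feeding a solution of $\sigma^*$ into the problem at $\lambda=\lambda^*$ (it satisfies $P_{\lambda^*}(\bar u)=0$ and $F(\bar u,\bar v)=0$ by Lemmas \ref{extremequal} and \ref{extremal}) gives $\mu_{ext}(\lambda^*)\le\sigma^*$, and Lemma \ref{extremequal} rules out the equality $\mu_{ext}(\lambda)=\sigma^*$ for $\lambda<\lambda^*$ because an attaining pair would then solve $\sigma^*$ while having strictly unequal quotients $R_p(\bar u)/\lambda_1<\sigma^*=R_q(\bar v)/\mu_1$. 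This yields $\mu_{ext}(\lambda^*)=\sigma^*$ and the strict lower bound for $\lambda\in(\lambda_1,\lambda^*)$. Mirroring Lemma \ref{extremal} and Corollary \ref{fzero}, I would then show a minimizer has $F(\bar u,\bar v)=0$: if $F(\bar u,\bar v)>0$, the inequality persists for $v$ near $\bar v$ while $P_\lambda(\bar u)\le 0$ is unaffected, so feasibility is open near $\bar v$; since $\bar v$ cannot be a local (hence global, i.e. first-eigenfunction) minimizer of $R_q$ — that would force $\mu_{ext}(\lambda)=1$, contradicting $\mu_{ext}(\lambda)>1$ — one can strictly decrease $R_q(v)$ feasibly, a contradiction.

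Finally I would show $P_\lambda(\bar u)=0$ and produce the equations, as in Corollary \ref{solution}. Applying the Lagrange multiplier theorem to $\inf\{R_q(v)/\mu_1:\ P_\lambda(u)=0,\ F(u,v)=0\}$ requires the constraint map $(u,v)\mapsto(P_\lambda(u),F(u,v))$ to have surjective differential at $(\bar u,\bar v)$. Since $P_\lambda$ is independent of $v$, surjectivity reduces to $D_uP_\lambda(\bar u)\neq 0$ and $D_vF(\bar u,\bar v)\neq 0$; the first holds because $\lambda\neq\lambda_1$ forbids a nonnegative $p$-eigenfunction at level $\lambda$, and the second is equivalent to $\operatorname{supp}(\bar u\bar v)\not\subset\Omega^0$. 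The multipliers then give
\begin{equation*}
-\Delta_p\bar u=\lambda|\bar u|^{p-2}\bar u+c_1\alpha f|\bar u|^{\alpha-2}|\bar v|^\beta\bar u,\qquad
-\Delta_q\bar v=\mu|\bar v|^{q-2}\bar v+c_2\beta f|\bar u|^\alpha|\bar v|^{\beta-2}\bar v,
\end{equation*}
with $\mu=\mu_1\mu_{ext}(\lambda)$, and a scaling $(u,v)\mapsto(tu,sv)$ absorbs $c_1,c_2$, so $(tu,sv)$ solves \eqref{pq}. The same surjectivity argument applied with $P_\lambda(u)\le 0$ in the inactive-constraint role shows $P_\lambda(\bar u)=0$: otherwise the Euler--Lagrange system would force either $\bar v$ to be a first eigenfunction (whence $\mu_{ext}(\lambda)=1$, absurd) or $\operatorname{supp}(\bar u\bar v)\subset\Omega^0$.

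The hard part will be exactly this surjectivity, i.e.\ excluding $\operatorname{supp}(\bar u\bar v)\subset\Omega^0$, under hypothesis $f_2$ where $\Omega^0$ has positive measure. This is the technical heart already met in Lemma \ref{extremequal}: one invokes the regularity theory of \cite{Lieberman92} and the strong maximum principle of \cite{Vazquez84} on the connected component of $\operatorname{int}(\Omega^0\cup\Omega^+)$ meeting both $\Omega^0$ and $\Omega^+$ to conclude that $\bar u$ and $\bar v$ are strictly positive there, contradicting the containment in $\Omega^0$. Under $f_1$ this step is immediate since $|\Omega^0|=0$, and the symmetric statements for $\lambda_{ext}(\mu)$ are obtained by exchanging the two equations.
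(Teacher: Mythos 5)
Your proposal is correct and takes essentially the same route as the paper: the paper's own proof is the one-line remark ``arguing as in Corollary \ref{fzero}, Lemma \ref{extremequal} and Corollary \ref{solution}'', and your write-up is exactly that argument carried out in detail --- direct method with the subcritical compactness, activeness of the constraints $F(u,v)=0$ and $P_\lambda(u)=0$ via the local-minimizer-of-the-Rayleigh-quotient perturbation, and Lagrange multipliers with the maximum-principle exclusion of $\operatorname{supp}(\bar{u}\bar{v})\subset\Omega^0$ under $f_2$, plus the same scaling to produce a solution of \eqref{pq}. Your observation that the strict inequality $\mu_{ext}(\lambda)>\sigma^*$ holds only for $\lambda<\lambda^*$, with equality $\mu_{ext}(\lambda^*)=\sigma^*$ at the endpoint, is the accurate reading of the (slightly loosely stated) claim and matches what the paper's referenced lemmas actually give.
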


From the Proposition \ref{minimiproblems} it follows that (see also \cite{bobil,bobil2})

\begin{cor}\label{solutions} Suppose that $\sigma=(\lambda,\mu)$ satisfies $\lambda\in (\lambda_1,\lambda^*)$ and $\mu^*\le \mu<\mu_{ext}(\lambda)$ or $\mu\in (\mu_1,\mu^*)$ and $\lambda^*\le \lambda<\lambda_{ext}(\mu)$. Then $P_\lambda(u)\le 0$ and $Q_\mu(v)\le 0$ implies that $F(u,v)<0$.	
	
\end{cor}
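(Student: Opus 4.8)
The plan is to argue by contradiction and to treat only the first alternative, namely $\lambda\in(\lambda_1,\lambda^*)$ with $\mu^*\le\mu<\mu_{ext}(\lambda)$; the second alternative then follows by the symmetric argument, interchanging the two components and replacing $\mu_{ext}(\lambda)$ by $\lambda_{ext}(\mu)$. So I would fix a pair $(u,v)$ with $u$ and $v$ nontrivial (so that the Rayleigh quotients below are defined) satisfying $P_\lambda(u)\le 0$ and $Q_\mu(v)\le 0$, and suppose, contrary to the assertion, that $F(u,v)\ge 0$.

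The key observation is that such a pair is admissible for the variational problem defining $\mu_{ext}(\lambda)$: the two constraints appearing in that infimum are precisely $P_\lambda(u)\le 0$ and $F(u,v)\ge 0$, both of which now hold. Hence, directly from the definition of $\mu_{ext}(\lambda)$ as a greatest lower bound,
\begin{equation*}
\frac{1}{\mu_1}\frac{\int|\nabla v|^q}{\int|v|^q}\ \ge\ \mu_{ext}(\lambda).
\end{equation*}
On the other hand, the remaining hypothesis $Q_\mu(v)\le 0$ is equivalent to $\int|\nabla v|^q\le\mu\int|v|^q$, which bounds the very same Rayleigh quotient of $v$ above by the value attached to $\mu$. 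Since the standing hypothesis places $\mu$ strictly below $\mu_{ext}(\lambda)$, these two inequalities are incompatible, and the resulting contradiction forces $F(u,v)<0$, as claimed. Note that the strict inequality also rules out the borderline case $F(u,v)=0$, so no separate treatment is needed.

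The point worth emphasizing is that all the genuine analysis has already been carried out in Proposition~\ref{minimiproblems}: it is there that $\mu_{ext}(\lambda)$ (and $\lambda_{ext}(\mu)$) is shown to be finite and to satisfy $\mu^*<\mu_{ext}(\lambda)$ (respectively $\lambda^*<\lambda_{ext}(\mu)$), which is exactly what guarantees that the parameter window $[\mu^*,\mu_{ext}(\lambda))$ appearing in the statement is nonempty and that the threshold is meaningful. Granting that, the corollary is little more than an unwinding of the infimum, so I anticipate no substantive obstacle. The one place demanding care is purely bookkeeping: one must track the $1/\mu_1$ (respectively $1/\lambda_1$) normalization consistently, so that the strict inequality $\mu<\mu_{ext}(\lambda)$ is compared against the correctly normalized Rayleigh quotient produced by $Q_\mu(v)\le 0$.
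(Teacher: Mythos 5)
Your proof is correct and is essentially the paper's own argument: the paper offers no written proof, simply asserting that the corollary "follows from Proposition \ref{minimiproblems}," and your unwinding of the infimum defining $\mu_{ext}(\lambda)$ (resp. $\lambda_{ext}(\mu)$) — admissibility of $(u,v)$ under $P_\lambda(u)\le 0$, $F(u,v)\ge 0$ versus the bound on the Rayleigh quotient of $v$ forced by $Q_\mu(v)\le 0$ and $\mu<\mu_{ext}(\lambda)$ — is exactly the intended deduction. Your closing caveat about the $1/\mu_1$ normalization is well taken, since the paper itself is inconsistent on this point (compare $\mu_{ext}(\lambda^*)=\sigma^*$ in Lemma \ref{extrecurvepa} with $\mu_{ext}(\lambda^*)=\mu^*$ in Theorem \ref{AKGLOBAL}), and the contradiction only closes once the quotient and the parameter are measured in the same normalization.
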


We prove some properties of the functions  $\mu_{ext}$ and $\lambda_{ext}$

\begin{prop}\label{extecurvepro} The functions $\mu_{ext}$ and $\lambda_{ext}$ are continuous and non increasing.
\end{prop}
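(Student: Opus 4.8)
My plan is to establish monotonicity and continuity for $\mu_{ext}$; the arguments for $\lambda_{ext}$ are symmetric (swap the roles of the two components). The key structural fact I will exploit is the reformulation from Proposition \ref{minimiproblems}, namely that
\begin{equation*}
\mu_{ext}(\lambda)=\inf\left\{\frac{1}{\mu_1}\frac{\int |\nabla v|^q}{\int |v|^q}:\ P_\lambda(u)=0,\ F(u,v)=0\right\},
\end{equation*}
together with the fact that the feasible set is nonempty and the infimum is attained for every $\lambda\in(\lambda_1,\lambda^*]$.

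\emph{Monotonicity.} For the non-increasing property, I would fix $\lambda_1<\lambda<\lambda'\le\lambda^*$ and take a minimizer $(u,v)$ for $\mu_{ext}(\lambda)$, so that $P_\lambda(u)=0$ and $F(u,v)=0$. Since $\lambda'>\lambda$ and (from the properties of minimizers and the first eigenvalue) $\int|u|^p>0$, we have $P_{\lambda'}(u)=P_\lambda(u)-(\lambda'-\lambda)\int|u|^p=-(\lambda'-\lambda)\int|u|^p<0$. Thus $(u,v)$ is feasible for the relaxed problem defining $\mu_{ext}(\lambda')$ with the inequality constraint $P_{\lambda'}(u)\le 0$ (recall from the original definition that $\mu_{ext}(\lambda')$ uses $P_{\lambda'}(u)\le 0$, $F(u,v)\ge 0$), and the two formulations agree by Proposition \ref{minimiproblems}. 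Hence $\mu_{ext}(\lambda')\le \frac{1}{\mu_1}\frac{\int|\nabla v|^q}{\int|v|^q}=\mu_{ext}(\lambda)$, giving the non-increasing property.

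\emph{Continuity.} For continuity I would argue by upper and lower semicontinuity separately. Upper semicontinuity at a point $\lambda_0$ follows from a perturbation argument: given a minimizer $(u_0,v_0)$ for $\mu_{ext}(\lambda_0)$, I would rescale $u_0$ slightly (replacing $u_0$ by $t u_0$ does not change $P_{\lambda}(u_0)$ being zero or the ratio, so instead I perturb the constraint) to produce, for $\lambda$ near $\lambda_0$, a feasible pair for $\mu_{ext}(\lambda)$ whose Rayleigh quotient in $v$ is close to $\mu_{ext}(\lambda_0)$; this yields $\limsup_{\lambda\to\lambda_0}\mu_{ext}(\lambda)\le\mu_{ext}(\lambda_0)$. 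For lower semicontinuity I would take $\lambda_n\to\lambda_0$ with corresponding minimizers $(u_n,v_n)$ normalized in $S$, extract weakly convergent subsequences using the compact Sobolev embeddings (available by the subcritical condition $\frac{\alpha}{p^*}+\frac{\beta}{q^*}<1$ in \eqref{alphabeta}), pass to the limit in the constraints $P_{\lambda_n}(u_n)=0$ and $F(u_n,v_n)=0$, and use weak lower semicontinuity of the Dirichlet norms to conclude $\mu_{ext}(\lambda_0)\le\liminf\mu_{ext}(\lambda_n)$. Combining monotonicity (which already forces one-sided limits to exist) with these two semicontinuity bounds gives continuity.

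\emph{Main obstacle.} The delicate point is the lower-semicontinuity step: I must ensure the weak limit $(u_0,v_0)$ is admissible and nondegenerate, i.e. that neither $u_0$ nor $v_0$ vanishes and that the constraint $F(u_0,v_0)=0$ survives in the limit rather than degenerating to $F>0$ or the trivial pair. The uniform bounds $\mu^*<\mu_{ext}(\lambda_n)<\infty$ from Proposition \ref{minimiproblems} prevent the Rayleigh quotient in $v$ from blowing up or collapsing to the first eigenvalue, and the strong convergence of $F$ (guaranteed by the compact embedding and continuity of $F$) lets me pass the equality constraint to the limit; controlling these simultaneously, especially under hypothesis $f_2$ where $\Omega^0$ has positive measure, is where the care is required.
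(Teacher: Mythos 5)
Your skeleton (monotonicity from feasibility, lower semicontinuity by weak compactness, upper semicontinuity by perturbation) is the same as the paper's, and two of the three pieces are fine: your monotonicity argument is correct (the paper gets it even more directly from the inclusion of feasible sets, since $P_\lambda(u)\le 0$ implies $P_{\lambda'}(u)\le 0$ for $\lambda'>\lambda$), and your lower-semicontinuity step is exactly the paper's compactness argument (normalized minimizers, weak limits, nondegeneracy of $u$ and $v$ from $P_{\lambda_n}(u_n)\le 0$ and the bound $\mu^*<\mu_{ext}(\lambda_n)$, weak lower semicontinuity of the gradient norms, weak continuity of $F$). Note, however, that by monotonicity this compactness step is only needed for the limit from the right; from the left, $\liminf_{\lambda\uparrow\lambda_0}\mu_{ext}(\lambda)\ge\mu_{ext}(\lambda_0)$ is automatic, so your ``main obstacle'' paragraph points at the wrong place. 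The genuinely hard half of continuity is the upper bound from the left, $\limsup_{\lambda\uparrow\lambda_0}\mu_{ext}(\lambda)\le\mu_{ext}(\lambda_0)$, and this is precisely where your proposal has a gap.

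The gap is not a routine detail. By Proposition \ref{minimiproblems}, \emph{every} minimizer $(u_0,v_0)$ of $\mu_{ext}(\lambda_0)$ saturates the constraint, $P_{\lambda_0}(u_0)=0$, hence $P_\lambda(u_0)=(\lambda_0-\lambda)\int|u_0|^p>0$ for every $\lambda<\lambda_0$: exact minimizers are infeasible for \emph{all} parameters to the left, no matter how close, and (as you yourself observe) rescaling cannot fix this since $P_\lambda$ is homogeneous. So ``perturb the constraint'' cannot mean a small perturbation of the exact minimizer; one needs competitors that satisfy the constraint \emph{strictly}. This is the paper's key device: assuming $I=\lim_{\lambda_n\uparrow\lambda_0}\mu_{ext}(\lambda_n)>\mu_{ext}(\lambda_0)$, it selects, for each $\varepsilon>0$, a near-minimizer $(u_\varepsilon,v_\varepsilon)$ with $P_{\lambda_0}(u_\varepsilon)<0$, $F(u_\varepsilon,v_\varepsilon)=0$ and $\frac{1}{\mu_1}\frac{\int|\nabla v_\varepsilon|^q}{\int|v_\varepsilon|^q}<\mu_{ext}(\lambda_0)+\varepsilon<I$; since $P_{\lambda_n}(u_\varepsilon)=P_{\lambda_0}(u_\varepsilon)+(\lambda_0-\lambda_n)\int|u_\varepsilon|^p<0$ for large $n$, this pair is feasible for $\mu_{ext}(\lambda_n)$, forcing $\mu_{ext}(\lambda_n)<I$, a contradiction with monotone convergence of $\mu_{ext}(\lambda_n)$ down to $I$. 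Your proposal contains no counterpart of this strictly-feasible near-minimizer idea, so as written left-continuity is unproven; to repair it you must justify that the infimum taken over the strictly feasible set $\{P_{\lambda_0}(u)<0,\ F(u,v)\ge 0\}$ still equals $\mu_{ext}(\lambda_0)$ (for instance by perturbing a minimizer in directions governed by the Euler--Lagrange system it satisfies, via Proposition \ref{minimiproblems}), which is an additional argument, not a soft compactness fact.
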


\begin{proof} We prove for $\mu_{ext}$ (the other is similar). Observe from the definition that $\mu_{ext}$ is non increasing. Let us prove that 

	\begin{equation*}
	\lim_{\lambda_n\uparrow \lambda}\mu_{ext}(\lambda_n)=\lim_{\lambda_n\downarrow \lambda}\mu_{ext}(\lambda_n)=\mu_{ext}(\lambda).
	\end{equation*} 
	
	Suppose that $\lambda_n\uparrow \lambda\in (\lambda_1,\lambda^*)$ or  $\lambda_n\downarrow \lambda$ as $n\to \infty$. From the Proposition \ref{minimiproblems}, for each $n$, there exists $(u_n,v_n)$ such that $P_{\lambda_n}(u_n)\le 0$, $F(u_n,v_n)=0$ and $\frac{1}{\mu_1}\frac{\int |\nabla v_n|^q}{\int |v_n|^q}=\mu_{ext}(\lambda_n)$. Moreover, we can assume that $\|u_n\|_{1,p}=\|v_n\|_{1,q}=1$ for each $n$. 
	
	 Once $\|u_n\|_{1,p}=\|v_{n}\|_{1,q}=1$, we can assume that 
	$(u_n,v_n)\rightharpoonup (u,v)$ in $W_0^{1,p}(\Omega)\times 	W_0^{1,q}(\Omega)$, $(u_n,v_n)\to (u,v)$ in $L^p(\Omega)\times 	L^q(\Omega)$. Since $P_{\lambda_n}(u_n)\le 0$, we have that  $u\neq 0$. Since $\mu_{ext}$ is non increasing, we can suppose that $\mu_{ext}(\lambda_n)\to I$ as $n\to \infty$. It follows from this that $v\neq 0$. From the weak lower semi-continuity of the norm we conclude that $P_{\lambda}(u)\le \liminf P_{\lambda_n}(u_n)\le 0$ and $F(u,v)=0$ and consequently 	
	\begin{equation*}
	 \mu_{ext}(\lambda)\le \frac{1}{\mu_1}\frac{\int |\nabla v|^q}{\int |v|^q}\le \liminf \frac{1}{\mu_1}\frac{\int |\nabla v_n|^q}{\int |v_n|^q}=I.
	\end{equation*}	

We claim that $I=\mu_{ext}(\lambda)$. Indeed, this is true if $\lambda_n\downarrow \lambda$ as $n\to \infty$ because $\mu_{ext}$ is non increasing. Therefore, let us assume on the contrary that $\lambda_n\uparrow \lambda$ as $n\to \infty$ but however $I>\mu_{ext}(\lambda)$. Given $\varepsilon>0$, choose any $(u_\varepsilon,v_\varepsilon)$ such that $P_{\lambda}(u_\varepsilon)<0$, $F(u_\varepsilon,v_\varepsilon)=0$ and
\begin{equation}\label{contiex1}
\mu_{ext}(\lambda)\le \frac{1}{\mu_1}\frac{\int |\nabla v_\varepsilon|^q}{\int|v_\varepsilon| ^q}< \mu_{ext}(\lambda)+\varepsilon<I.
\end{equation}

Observe that $P_{\lambda_n}(u_\varepsilon)=P_\lambda(u_\varepsilon)+(\lambda-\lambda_n)\int |u_\varepsilon|^p$ for each $n$ and since $\lambda_n\to \lambda$, we conclude that $P_{\lambda_n}(u_\varepsilon)<0$ for sufficienty large $n$. From \eqref{contiex1} it follows that 
\begin{equation*}
\mu_{ext}(\lambda_n)\le \frac{1}{\mu_1}\frac{\int |\nabla v_\varepsilon|^q}{\int|v_\varepsilon| ^q}< \mu_{ext}(\lambda)+\varepsilon<I,
\end{equation*}
which contradicts $I=\lim \mu_{ext}(\lambda_n)$ and henece $I=\mu_{ext}(\lambda)$ for the case where $\lambda_n\uparrow \lambda$. The proof that $\lambda_{ext}$ is decreasing is a consequence of the Proposition \ref{minimiproblems}.

\end{proof}

Let $\sigma=(\lambda,\mu)$ and $\sigma'=(\lambda',\mu')$. We say that $\sigma\le\sigma'$ if $\lambda\le \lambda'$ and $\mu\le \mu'$. If at least one of the inequalities is striclty, we write $\sigma<\sigma'$.

\begin{prop}\label{unboun}
	Suppose that for $\sigma=(\lambda,\mu)$ where $\lambda\in (\lambda_1,\lambda^*]$ and $\mu=\mu_{ext}(\lambda)$ or $\mu\in (\mu_1,\mu^*]$ and $\lambda=\lambda_{ext}(\mu)$, there exists $(u,v)$ such that $P_\lambda(u)=Q_\mu(v)=F(u,v)=0$. Then, for each $\sigma'>\sigma$ there holds
	
	\begin{equation*}
	\hat{J}_{\sigma'}=-\infty.
	\end{equation*}
\end{prop}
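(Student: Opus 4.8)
The plan is to exhibit, for the given $\sigma'=(\lambda',\mu')>\sigma$, a sequence of points of $\Theta_{\sigma'}$ along which $J_{\sigma'}\to-\infty$. First I note that $J_{\sigma'}$ is homogeneous of degree zero: from \eqref{J1}, using $P_{\lambda'}(tu)=t^pP_{\lambda'}(u)$, $Q_{\mu'}(sv)=s^qQ_{\mu'}(v)$ and $F(tu,sv)=t^\alpha s^\beta F(u,v)$, one checks $J_{\sigma'}(tu,sv)=J_{\sigma'}(u,v)$. Hence I may drop the normalization $\|u\|_{1,p}=\|v\|_{1,q}=1$ and seek the infimum of the right-hand side of \eqref{J1} over the whole cone $\{P_{\lambda'}(u)<0,\ Q_{\mu'}(v)<0,\ F(u,v)<0\}$. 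The driving observation is already visible in \eqref{J1}: if I can produce points of this cone at which the two factors $|P_{\lambda'}(u)|$ and $|Q_{\mu'}(v)|$ stay bounded away from zero while $|F(u,v)|\to 0^+$, then $J_{\sigma'}\to-\infty$.

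Evaluating the three functionals at the base point $(u_0,v_0)$ furnished by the hypothesis (taking $u_0,v_0\ge 0$, which alters none of $P_\lambda,Q_\mu,F$), and using $P_\lambda(u_0)=Q_\mu(v_0)=F(u_0,v_0)=0$, I get
\begin{equation*}
P_{\lambda'}(u_0)=-(\lambda'-\lambda)\|u_0\|_p^p\le 0,\qquad Q_{\mu'}(v_0)=-(\mu'-\mu)\|v_0\|_q^q\le 0,\qquad F(u_0,v_0)=0,
\end{equation*}
and since $\sigma'>\sigma$ at least one of the two linear factors is strictly negative; by the symmetry between the two equations I assume $Q_{\mu'}(v_0)<0$. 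The crucial case is then $\lambda'=\lambda$ (so that $P_{\lambda'}(u_0)=P_\lambda(u_0)=0$ vanishes), which is exactly what underlies parts iv) and viii) of Lemma \ref{extrecurvepa}.

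In that case I activate $P_\lambda$ and push $F$ below zero while keeping $Q_{\mu'}$ negative, exploiting that $P_\lambda$ depends only on $u$ and $Q_{\mu'}$ only on $v$. I first perturb $u_0$ in a descent direction $\phi$ for $P_\lambda$, setting $u_\tau=u_0+\tau\phi$ with $P_\lambda(u_\tau)<0$ for a small fixed $\tau>0$; this is possible because $DP_\lambda(u_0)\neq 0$. Indeed, in the situations where this proposition is applied $(u_0,v_0)$ is a rescaling of a solution of \eqref{pq} (Proposition \ref{minimiproblems}), so $-\Delta_p u_0-\lambda|u_0|^{p-2}u_0$ is a nonzero multiple of $f|u_0|^{\alpha-2}|v_0|^\beta u_0$, whence $DP_\lambda(u_0)$ is proportional to $D_uF(u_0,v_0)$ and both vanish only if $\operatorname{supp}(u_0v_0)\subset\Omega^0$, which is excluded under $f_1$ or $f_2$ exactly as in the proof of Lemma \ref{extremequal}. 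With $u_\tau$ now \emph{fixed}, so that $P_{\lambda'}(u_\tau)=P_\lambda(u_\tau)<0$ is frozen at a definite size, I vary only the second variable: the continuous map $s\mapsto F(u_\tau,v_0+s\eta)$, with $\eta$ a descent direction for $F$ in $v$ (available since $D_vF(u_0,v_0)\neq 0$ by the same support dichotomy, or alternatively by inserting a nonnegative bump supported in $\Omega^-$), takes the value $0$ near $s=0$; choosing $s$ on the side where this quantity is negative and letting it approach the zero, I obtain points with $P_{\lambda'}(u_\tau)<0$ fixed, $Q_{\mu'}(v_0+s\eta)<0$ bounded away from zero (for small $s$, since $Q_{\mu'}(v_0)<0$), and $F(u_\tau,v_0+s\eta)\to 0^-$. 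By \eqref{J1} these points lie in $\Theta_{\sigma'}$ and $J_{\sigma'}\to-\infty$, so $\hat{J}_{\sigma'}=-\infty$. The remaining cases ($\mu'=\mu$ with $\lambda'>\lambda$, and both inequalities strict) are handled by the same decoupling argument with the roles of the two variables interchanged, the latter being easiest as no activation step is needed.

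The delicate point, and the reason a naive perturbation fails, is precisely this boundary case $\lambda'=\lambda$: a first-order perturbation moving $u$ alone makes $|P_\lambda(u)|$ and $|F(u,v_0)|$ tend to zero at comparable rates (their derivatives are proportional along a solution), so their quotient need not blow up, and one cannot control the exponent $\alpha/p-1$ in either direction. The construction above circumvents this by decoupling the two scales: after fixing $u_\tau$ I move $v$ alone, which changes $F$ but leaves $P_{\lambda'}$ entirely untouched, thereby guaranteeing $|F|\to 0$ against a numerator that stays bounded below. The main technical care therefore lies in justifying the non-degeneracies $DP_\lambda(u_0)\neq 0$ and $D_vF(u_0,v_0)\neq 0$ through the support dichotomy of Lemma \ref{extremequal}, and in verifying that the small $v$-perturbation preserves the strict negativity of $Q_{\mu'}$.
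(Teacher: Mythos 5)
Your argument is correct and rests on the same two pillars as the paper's own proof: Proposition \ref{minimiproblems} (the hypothesis point is, after rescaling, a solution of \eqref{pq}, which ties $DP_\lambda$, $DQ_\mu$ to $D_uF$, $D_vF$) and the explicit formula \eqref{J1}, with the blow-up produced by driving $F$ to $0^-$ while $|P_{\lambda'}|$ and $|Q_{\mu'}|$ stay bounded away from zero. The difference is in the execution, and your version is genuinely stronger. The paper perturbs both components at once along a single direction $(w,\bar w)$ with $DP_\lambda(u)w<0$ and $DQ_\mu(v)\bar w<0$, so that $DF(u,v)(w,\bar w)<0$, and then relies on the \emph{strict} inequalities $P_{\lambda'}(u)<0$, $Q_{\mu'}(v)<0$ to keep the numerator of \eqref{J1} away from zero; as written this covers only the case $\lambda'>\lambda$ \emph{and} $\mu'>\mu$. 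It does not cover the boundary case $\lambda'=\lambda$ (or $\mu'=\mu$), which is permitted by the relation $\sigma'>\sigma$ and is exactly the case invoked in Corollary \ref{unbounini}, where $(\lambda,\mu)$ with $\mu>\mu_{ext}(\lambda)$ is compared to $(\lambda,\mu_{ext}(\lambda))$. Your decoupling --- first freeze $u_\tau$ with $P_\lambda(u_\tau)<0$ at a fixed size, then move only $v$ to bring $F$ to $0^-$, which leaves $P_{\lambda'}$ untouched --- is precisely what closes this case; you also supply the non-degeneracy $DP_\lambda(u_0)\neq 0$ via the support dichotomy, which the paper tacitly assumes when it writes ``choose $(w,\bar w)$ such that \dots''. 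What the paper's route buys is brevity; what yours buys is the full statement as claimed, including the one-coordinate-equality case.

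Two small repairs. First, your parenthetical alternative for a $v$-direction --- a nonnegative bump $\eta$ supported in $\Omega^-$ --- does not bypass the dichotomy: if $u_0v_0$ vanishes a.e.\ on the bump's support, then $D_vF(u_0,v_0)\eta=0$ and $F(u_\tau,v_0+s\eta)\le F(u_\tau,v_0)$ for \emph{both} signs of $s$ (on the set where $v_0\equiv 0$ one has $|v_0+s\eta|^\beta=|s|^\beta\eta^\beta$), so no zero crossing is produced exactly in the degenerate situation where an alternative would be needed; keep only the dichotomy justification, which suffices. Second, the phrase ``takes the value $0$ near $s=0$'' deserves one line: it holds because $|F(u_\tau,v_0)|=O(\tau)$ while $|D_vF(u_\tau,v_0)\eta|$ stays bounded below for small $\tau$, so the crossing occurs at some $s^*=O(\tau)$, small enough that $Q_{\mu'}(v_0+s\eta)$ remains below $\tfrac{1}{2}Q_{\mu'}(v_0)<0$ on $[0,s^*]$.
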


\begin{proof} Write $\sigma'=(\lambda',\mu')$. Then $P_{\lambda'}(u)=P_{\lambda}(u)+(\lambda-\lambda')\int |u|^p<0$ and $Q_{\mu'}(v)=Q_{\mu}(v)+(\mu-\mu')\int |v|^q<0$. From the Proposition \ref{minimiproblems}, there exists $t,s>0$ such that 	
		\begin{equation}\label{entendeu}
	\left\{
	\begin{aligned}
	-\Delta_p u &= \lambda |u|^{p-2}u+\alpha t^{\alpha-p}s^\beta f|u|^{\alpha-2}|v|^\beta u
	&\mbox{in}\ \ \Omega, \\ 
	-\Delta_q v &= \mu |v|^{q-2}v+\beta t^\alpha s^{\beta-q} f|u|^{\alpha}|v|^{\beta-2}v
	&\mbox{in}\ \ \Omega.
	\end{aligned}
	\right.
	\end{equation}
	
Choose $(w,\bar{w})\in W_0^{1,p}(\Omega)\times W_0^{1,q}(\Omega)$ such that $-\Delta_p uw- \lambda |u|^{p-2}uw<0$ and $-\Delta_q v\bar{w}- \mu |v|^{q-2}v\bar{w}<0$. It follows from \eqref{entendeu} that 	$D F(u,v)(w,\bar{w})<0$ and consequently, once $F(u,v)=0$, we can find a sequence $(u_n,v_n)\in W_0^{1,p}(\Omega)\times W_0^{1,q}(\Omega)$ such that $F(u_n,v_n)<0$ and $(u_n,v_n)\to (u,v)$ in $ W_0^{1,p}(\Omega)\times W_0^{1,q}(\Omega)$. It follows that for sufficiently large $n$, there exists $c<0$ such that $P_{\lambda'}(u_n)<c$, $Q_{\mu'}(v_n)<c$ and hence, from \eqref{J1} we have that	
	\begin{equation*}
	\lim_{n\to \infty} J_{\sigma'} \left(\frac{u_n}{\|u_n\|_{1,p}},\frac{v_n}{\|v_n\|_{1,q}}\right)= -\infty.
	\end{equation*}

\end{proof}

From the Propositions \ref{extecurvepro} and \ref{unboun} we obtain

\begin{cor}\label{unbounini} Suppose that $\sigma=(\lambda,\mu)$ satisfies $\lambda\in (\lambda_1,\lambda^*]$ and $\mu_{ext}(\lambda)<\mu$ or $\mu\in (\mu_1,\mu^*]$ and $\lambda_{ext}(\mu)<\lambda$. Then, $\hat{J}_\sigma=-\infty$.
\end{cor}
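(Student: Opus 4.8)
The plan is to reduce the corollary to Proposition \ref{unboun} by exhibiting, for any $\sigma$ in the stated region, a point $\sigma_0$ on the extremal curve with $\sigma_0<\sigma$ at which the triple‑vanishing hypothesis $P_\lambda(u)=Q_\mu(v)=F(u,v)=0$ of Proposition \ref{unboun} is satisfied. Since the region splits into two symmetric pieces, I would treat only the first, where $\lambda\in(\lambda_1,\lambda^*]$ and $\mu_{ext}(\lambda)<\mu$; the other case follows verbatim with the roles of the two equations interchanged, using $\lambda_{ext}$ in place of $\mu_{ext}$.

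First I would set $\sigma_0=(\lambda,\mu_{ext}(\lambda))$, the point of $\gamma^*$ lying directly below $\sigma$. By Proposition \ref{minimiproblems} the minimization problem defining $\mu_{ext}(\lambda)$ is solved by some pair $(u,v)$ with $P_\lambda(u)=0$ and $F(u,v)=0$; moreover, since this pair realizes the infimum, the Rayleigh quotient of $v$ equals $\mu_{ext}(\lambda)$, so that $Q_{\mu_{ext}(\lambda)}(v)=0$ as well. Thus $(u,v)$ is exactly the triple‑zero configuration required, and the hypothesis of Proposition \ref{unboun} holds at $\sigma_0$ (the endpoint $\lambda=\lambda^*$, for which $\sigma_0=(\lambda^*,\mu^*)$, is already included in the range $(\lambda_1,\lambda^*]$ covered by Proposition \ref{minimiproblems}).

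Next, because $\mu>\mu_{ext}(\lambda)$ while the first coordinates of $\sigma$ and $\sigma_0$ coincide, we have $\sigma>\sigma_0$ in the partial order introduced just before Proposition \ref{unboun}. Applying that proposition with $\sigma'=\sigma$ then gives $\hat J_\sigma=-\infty$, which is the assertion. The monotonicity and continuity of $\mu_{ext}$ and $\lambda_{ext}$ from Proposition \ref{extecurvepro} are what guarantee that $\sigma_0$ is genuinely a point of the single curve $\gamma^*$ lying weakly below $\sigma$, so that the two branches fit together and the argument covers the entire region above $\gamma^*$ consistently.

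The only point requiring care is the verification of the triple‑zero hypothesis of Proposition \ref{unboun}: one must be certain that the minimizer of $\mu_{ext}(\lambda)$ furnished by Proposition \ref{minimiproblems} simultaneously saturates all three constraints, that is $P_\lambda(u)=0$, $F(u,v)=0$ and the Rayleigh‑quotient identity forcing $Q_{\mu_{ext}(\lambda)}(v)=0$, rather than merely the inequalities $P_\lambda(u)\le 0$, $F(u,v)\ge 0$ appearing in the definition of $\mu_{ext}$. Once this saturation at the minimizer is in hand, the corollary is immediate.
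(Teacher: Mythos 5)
Your proposal is correct and takes essentially the same route as the paper: the paper obtains this corollary in one line from Propositions \ref{extecurvepro} and \ref{unboun}, i.e.\ precisely by noting that the curve point $(\lambda,\mu_{ext}(\lambda))$ (resp.\ $(\lambda_{ext}(\mu),\mu)$) lies below $\sigma$ in the partial order and carries a pair with $P_\lambda(u)=Q_{\mu}(v)=F(u,v)=0$, so that Proposition \ref{unboun} applies with $\sigma'=\sigma$. Your write-up simply makes explicit that this triple-zero hypothesis is supplied by Proposition \ref{minimiproblems} (the minimizer saturates all three constraints), which is exactly the ingredient the paper's citation implicitly relies on.
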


Now we can prove the Lemma \ref{extrecurvepa}

\begin{proof}[Proof of Lemma \ref{extrecurvepa}] It follows from the Propositions \ref{minimiproblems}, \ref{extecurvepro} and the Corollaries 	\ref{solutions}, \ref{unbounini}.
\end{proof}
\section{Minimizers to $\hat{J}_\sigma$ under the extremal parameter curve} \label{abovepar}

We denote the extremal parameters curve studied in Section \ref{Sec} by

\begin{equation}\label{extcruvegamma}
\gamma^*=\{\sigma=(\lambda,\mu):\  \lambda\in (\lambda_1,\lambda^*],\ \mu=\mu_{ext}(\lambda)\ \mbox{or}\ \mu\in (\mu_1,\mu^*],\ \lambda=\lambda_{ext}(\mu)\}.
\end{equation}

This curve determines two sets over the parameter plane $(\lambda,\mu)$ in the following way

\begin{equation}\label{extcruvegamma1}
	\gamma^*_-=\{\sigma=(\lambda,\mu):\  \lambda\in (\lambda_1,\lambda^*],\ \mu<\mu_{ext}(\lambda)\ \mbox{or}\ \mu\in (\mu_1,\mu^*],\ \lambda<\lambda_{ext}(\mu) \},
\end{equation}
and
\begin{equation}\label{extcruvegamma2}
\gamma^*_+=\{\sigma=(\lambda,\mu):\  \lambda\in (\lambda_1,\lambda^*],\ \mu>\mu_{ext}(\lambda)\ \mbox{or}\ \mu\in (\mu_1,\mu^*],\ \lambda>\lambda_{ext}(\mu)\ \mbox{or}\ \lambda>\lambda^*,\ \mu>\mu^* \}.
\end{equation}

For $\sigma\in \gamma^*_-$, consider the following constrained minimzation problem

\begin{equation}\label{min}
\hat{J}_\sigma=\inf \{J_\sigma (u,v):\ (u,v)\in \Theta_\sigma\}.
\end{equation}

Also consider

\begin{equation}\label{minimizersset}
\mathcal{S}_\sigma=\{(u,v)\in \Theta_\sigma:\ \hat{J}_{\sigma}= J_{\sigma}(u,v)\}.
\end{equation}

The following result can be found in \cite{bobil2} (remember that $\gamma^*=\Gamma_f((-,\infty))$. We state it here for completeness and observe that its proof is a consequence of the Lemma \ref{extrecurvepa}.

\begin{lem}\label{globalmin} Suppose that $\sigma\in \gamma^*_-$. Then, $\mathcal{S}_\sigma\neq \emptyset$. Moreover, for each $(u,v)\in \mathcal{S}_\sigma$ the pair $(t_\sigma u,s_\sigma v)$ solves the equation \eqref{pq}.
\end{lem}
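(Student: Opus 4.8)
The plan is to solve the constrained minimization problem \eqref{min} by the direct method, exploiting the scale invariance of $J_\sigma$ together with the compactness coming from the subcritical growth in \eqref{alphabeta}, and then to identify a minimizer as a critical point through Proposition \ref{criticalpoint}. First I would record the homogeneity of $J_\sigma$: from \eqref{J1} and the scalings $P_\lambda(au)=a^pP_\lambda(u)$, $Q_\mu(bv)=b^qQ_\mu(v)$, $F(au,bv)=a^\alpha b^\beta F(u,v)$ one checks that $J_\sigma(au,bv)=J_\sigma(u,v)$ for all $a,b>0$. Hence it costs nothing to work with a minimizing sequence $(u_n,v_n)\in\Theta_\sigma$, so that $\|u_n\|_{1,p}=\|v_n\|_{1,q}=1$, and to renormalize any limit back into $\Theta_\sigma$. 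Passing to a subsequence, reflexivity gives $u_n\rightharpoonup u$ and $v_n\rightharpoonup v$, while the condition $\frac{\alpha}{p^*}+\frac{\beta}{q^*}<1$ yields, via compact Sobolev embeddings, the strong convergences $\int|u_n|^p\to\int|u|^p$, $\int|v_n|^q\to\int|v|^q$ and $F(u_n,v_n)\to F(u,v)$. Since $\int|\nabla u_n|^p\equiv1$, the identity $P_\lambda(u_n)=1-\lambda\int|u_n|^p$ shows that $P_\lambda(u_n)$ has a genuine limit $-A$ with $A\ge0$, and likewise $|Q_\mu(v_n)|\to B\ge0$; moreover $u\neq0$ and $v\neq0$, because $u=0$ would force $P_\lambda(u_n)\to1>0$, contradicting $P_\lambda(u_n)<0$.

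The core step is to show that the limit does not slide onto $\mathcal{N}_\sigma^0$. Weak lower semicontinuity of the gradient norms together with the strong convergence of the lower-order terms gives $P_\lambda(u)\le0$ and $Q_\mu(v)\le0$; since also $u,v\neq0$, Corollary \ref{solutions} — which is exactly where the hypothesis $\sigma\in\gamma^*_-$ enters — yields $F(u,v)<0$. This single fact does double duty. On the one hand it rules out $\hat{J}_\sigma=-\infty$: a sequence with $J_\sigma\to-\infty$ would need $|F(u_n,v_n)|\to0$, i.e. $F(u,v)=0$, contradicting $F(u,v)<0$; so $\hat{J}_\sigma$ is finite and negative. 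On the other hand, for a genuine minimizing sequence it gives $|F(u,v)|>0$, whence $\hat{J}_\sigma=-C\,A^{\alpha/(pd)}B^{\beta/(qd)}/|F(u,v)|^{1/d}$ with $A,B>0$. From $A>0$ we get $\lambda\int|u|^p>1\ge\int|\nabla u|^p$, so $P_\lambda(u)<0$ strictly, and similarly $Q_\mu(v)<0$. Finally $\int|\nabla u|^p\le1$ gives $|P_\lambda(u)|\ge A$ and $|Q_\mu(v)|\ge B$, so $J_\sigma(u,v)\le\hat{J}_\sigma$; after normalizing to $(\tilde u,\tilde v)=(u/\|u\|_{1,p},v/\|v\|_{1,q})\in\Theta_\sigma$ the reverse inequality is automatic, so $(\tilde u,\tilde v)$ attains $\hat{J}_\sigma$ and $\mathcal{S}_\sigma\neq\emptyset$.

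For the second assertion, any $(u,v)\in\mathcal{S}_\sigma$ is an interior minimizer of the $C^1$ functional $J_\sigma$ on the open subset $\Theta_\sigma$ of the $C^1$ manifold $S$, hence a critical point of $J_\sigma$; Proposition \ref{criticalpoint} then shows that $(t_\sigma u,s_\sigma v)$ is a critical point of $\Phi_\sigma$, i.e. a weak solution of \eqref{pq} (replacing $u,v$ by $|u|,|v|$ from the outset one may take it nonnegative). The main obstacle throughout is precisely the degeneration issue of the core step: without the strict position $\sigma\in\gamma^*_-$ the weak limit could land on $\mathcal{N}_\sigma^0$ and the infimum would fail to be attained — indeed it would be $-\infty$, by Lemma \ref{extrecurvepa} — so everything hinges on Corollary \ref{solutions} keeping $F(u,v)$ strictly negative in the limit, combined with the compactness of $F$ furnished by \eqref{alphabeta}.
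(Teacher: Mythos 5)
Your proof is correct in substance, and it is genuinely more self-contained than what the paper offers: the paper does not prove Lemma \ref{globalmin} at all, but refers to \cite{bobil2} and remarks that it is a consequence of Lemma \ref{extrecurvepa}. Your argument realizes exactly that outline (the hypothesis $\sigma\in\gamma^*_-$ enters only to keep the weak limit away from $\mathcal{N}_\sigma^0$, and the solution property comes from Proposition \ref{criticalpoint}), but the compactness mechanism is your own and is worth noting. Where the paper, in the analogous Proposition \ref{extre1} at the curve $\gamma^*$, passes to strong convergence through the Euler--Lagrange system and the $S^+$ property of $-\Delta_p,-\Delta_q$, you avoid strong convergence entirely: the $0$-homogeneity $J_\sigma(au,bv)=J_\sigma(u,v)$, the strong convergence of the lower-order terms and of $F$ (from $\frac{\alpha}{p^*}+\frac{\beta}{q^*}<1$), and weak lower semicontinuity of the gradient norms give $|P_\lambda(u)|\ge A$, $|Q_\mu(v)|\ge B$, hence $J_\sigma(u/\|u\|_{1,p},v/\|v\|_{1,q})\le\hat{J}_\sigma$, and the reverse inequality is free. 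Since $J_\sigma$ is monotone decreasing in $|P_\lambda|$ and $|Q_\mu|$, possible loss of mass in the limit can only help the minimization; this is a cleaner route for this particular lemma.

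One citation needs patching. Corollary \ref{solutions}, as stated, covers only those $\sigma\in\gamma^*_-$ with $\mu^*\le\mu$ or $\lambda^*\le\lambda$; it does not literally cover the box $(\lambda_1,\lambda^*)\times(\mu_1,\mu^*)$, which is part of $\gamma^*_-$. The separation property you need is still true there, with a one-line proof from the definition of the extremal parameter: if $u,v\neq0$, $P_\lambda(u)\le0$, $Q_\mu(v)\le0$ and $F(u,v)\ge0$, then
\[
\max\left\{\frac{1}{\lambda_1}\frac{\int|\nabla u|^p}{\int|u|^p},\frac{1}{\mu_1}\frac{\int|\nabla v|^q}{\int|v|^q}\right\}\le\max\left\{\frac{\lambda}{\lambda_1},\frac{\mu}{\mu_1}\right\}<\sigma^*,
\]
which contradicts \eqref{extre}. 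With that substitution (or by invoking Lemma \ref{extrecurvepa} together with this observation) your argument covers all of $\gamma^*_-$ in the quadrant $\lambda>\lambda_1$, $\mu>\mu_1$ --- which is the only region where $\Theta_\sigma\neq\emptyset$, and hence the only region where the lemma has content.
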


\begin{cor}\label{decrea} Let $\sigma=(\lambda,\mu),\sigma'=(\lambda',\mu')$ and suppose that $\sigma,\sigma' \in \gamma^*_-$ and $\sigma\le\sigma'$. Then $\hat{J}_\sigma\le \hat{J}_{\sigma'}$. Moreover, if  $\sigma<\sigma'$, then $\hat{J}_{\sigma'}< \hat{J}_\sigma$.
\end{cor}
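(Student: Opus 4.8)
The plan is to read both claims directly off the closed-form expression \eqref{J1} for $J_\sigma$, exploiting two monotonicity facts: the constraint set $\Theta_\sigma$ grows with $\sigma$, and $J_\sigma$ decreases pointwise as $\sigma$ increases. Thus what one actually establishes is that $\hat{J}_\sigma$ is non-increasing in $\sigma$, i.e. $\hat{J}_{\sigma'}\le\hat{J}_\sigma$ when $\sigma\le\sigma'$, with the strict refinement $\hat{J}_{\sigma'}<\hat{J}_\sigma$ going the same way; this is the direction dictated by the negative prefactor $-C$ in \eqref{J1}.

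First I would record the domain inclusion. If $(u,v)\in\Theta_\sigma$ then $\|u\|_{1,p}=\|v\|_{1,q}=1$ and $P_\lambda(u)<0$, $Q_\mu(v)<0$, $F(u,v)<0$. Since $\lambda\le\lambda'$ and $\int|u|^p>0$, we have
\[
P_{\lambda'}(u)=P_\lambda(u)+(\lambda-\lambda')\int|u|^p\le P_\lambda(u)<0,
\]
and likewise $Q_{\mu'}(v)\le Q_\mu(v)<0$; as $F$ does not depend on $\sigma$, it follows that $(u,v)\in\Theta_{\sigma'}$, so $\Theta_\sigma\subseteq\Theta_{\sigma'}$. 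Next comes the pointwise monotonicity: for fixed $(u,v)\in\Theta_\sigma$ the estimates above give $|P_{\lambda'}(u)|\ge|P_\lambda(u)|$ and $|Q_{\mu'}(v)|\ge|Q_\mu(v)|$, so since the exponents $\alpha/(pd),\beta/(qd)$ are positive and $-C<0$, formula \eqref{J1} yields
\[
J_{\sigma'}(u,v)=-C\frac{|P_{\lambda'}(u)|^{\alpha/(pd)}|Q_{\mu'}(v)|^{\beta/(qd)}}{|F(u,v)|^{1/d}}\le J_\sigma(u,v).
\]
Combining the two facts gives the chain
\[
\hat{J}_{\sigma'}=\inf_{\Theta_{\sigma'}}J_{\sigma'}\le\inf_{\Theta_\sigma}J_{\sigma'}\le\inf_{\Theta_\sigma}J_\sigma=\hat{J}_\sigma,
\]
the first inequality because the infimum over the larger set $\Theta_{\sigma'}$ is no larger, and the second by the pointwise bound.

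For the strict statement when $\sigma<\sigma'$, I would use Lemma \ref{globalmin}: since $\sigma\in\gamma^*_-$, the set $\mathcal{S}_\sigma$ is nonempty, so there is $(u_0,v_0)\in\Theta_\sigma\subseteq\Theta_{\sigma'}$ with $\hat{J}_\sigma=J_\sigma(u_0,v_0)$. Because $\sigma<\sigma'$, at least one of $\lambda<\lambda'$ or $\mu<\mu'$ holds, and since $\int|u_0|^p>0$ and $\int|v_0|^q>0$ the corresponding factor increases strictly; as $P_\lambda(u_0),Q_\mu(v_0),F(u_0,v_0)$ are all strictly negative, \eqref{J1} is a genuine continuous expression there and the pointwise bound becomes strict, $J_{\sigma'}(u_0,v_0)<J_\sigma(u_0,v_0)$. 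Hence $\hat{J}_{\sigma'}\le J_{\sigma'}(u_0,v_0)<J_\sigma(u_0,v_0)=\hat{J}_\sigma$.

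The only real subtlety is ensuring that the minimizer of $J_\sigma$ is an admissible competitor for $\hat{J}_{\sigma'}$; this is exactly the content of the inclusion $\Theta_\sigma\subseteq\Theta_{\sigma'}$, so beyond Lemma \ref{globalmin} (which furnishes the minimizer) no compactness or regularity input is needed, and the whole argument reduces to the elementary monotonicity of the explicit ratio in \eqref{J1}.
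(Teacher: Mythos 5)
Your proposal is correct and takes essentially the same route as the paper's own proof: both read the monotonicity off the explicit formula \eqref{J1} and evaluate at a minimizer furnished by Lemma \ref{globalmin}, giving $\hat{J}_{\sigma'}\le J_{\sigma'}(u_0,v_0)\le J_{\sigma}(u_0,v_0)=\hat{J}_{\sigma}$, with strictness when $\sigma<\sigma'$. If anything your write-up is the more careful one: you prove the inclusion $\Theta_\sigma\subseteq\Theta_{\sigma'}$ explicitly (the paper leaves it implicit and writes equalities at both ends of its chain, which cannot both hold), and you correctly identify that the monotone direction is $\hat{J}_{\sigma'}\le\hat{J}_{\sigma}$, so the first inequality in the corollary as printed is a typo, exactly as the paper's own computation shows.
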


\begin{proof} Choose any $(u,v)\in \mathcal{S}_{\overline{\sigma}}$ and observe that $|P_{\lambda'}(u)|\ge |P_{\lambda}(u)|$ and $|Q_{\mu'}(v)|\ge |Q_{\sigma}(v)|$. Therefore
	
	\begin{align*}
	\hat{J}_{\sigma'}&=J_{\sigma'}(u,v) \\
	&=-C\frac{|P_{\lambda'}(u)|^{\alpha/(pd)}|Q_{\mu'}(v)|^{\beta/(qd)}}{|F(u,v)|^{1/d}} \\
	&\le -C\frac{|P_{\lambda}(u)|^{\alpha/(pd)}|Q_{\mu}(v)|^{\beta/(qd)}}{|F(u,v)|^{1/d}} \\
	&=\hat{J}_{\sigma}.
	\end{align*}
	
	To prove that $\hat{J}_\sigma< \hat{J}_{\sigma'}$ if 	$\sigma<\sigma'$ just observe that $|P_{\lambda'}(u)|> |P_{\lambda}(u)|$ or $|Q_{\mu'}(v)|> |Q_{\mu}(v)|$.
	
\end{proof}
\section{Minimizers to $\hat{J}_\sigma$ at the extremal parameter curve}\label{atpar}

In this section we prove the following:

\begin{lem}\label{minextre} Suppose that $\sigma\in \gamma^*$, then $\mathcal{S}_\sigma\neq \emptyset$. Moreover, 
	\begin{description}
		\item[i)] if $\sigma=(\lambda,\mu_{ext}(\lambda))$, then there exists $(\nu_\sigma,\bar{\nu}_\sigma)\in (\lambda_1,\lambda)\times (\mu_1,\mu_{ext}(\lambda))$ such that $P_{\nu_\sigma}(u)\le 0$ and $Q_{\bar{\nu}_\sigma}(v)\le 0$ for each $(u,v)\in \mathcal{S}_\sigma$;
		\item[ii)] if $\sigma=(\lambda_{ext}(\mu),\mu)$, then there exists $(\nu_\sigma,\bar{\nu}_\sigma)\in (\lambda_1,\lambda_{ext}(\mu))\times (\mu_1,\mu)$ such that $P_{\nu_\sigma}(u)\le 0$ and $Q_{\bar{\nu}_\sigma}(v)\le 0$ for each $(u,v)\in \mathcal{S}_\sigma$.
	\end{description} 
\end{lem}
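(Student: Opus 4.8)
The plan is to treat case i), $\sigma=(\lambda,\mu_{ext}(\lambda))$; case ii) is entirely symmetric under interchanging the roles of $(p,\lambda,\alpha)$ and $(q,\mu,\beta)$. I would first exploit the homogeneity of the functional: a direct inspection of \eqref{J1} shows that the expression defining $J_\sigma$ is invariant under the scaling $(u,v)\mapsto(ru,\tau v)$ with $r,\tau>0$, since $P_\lambda(ru)=r^pP_\lambda(u)$, $Q_\mu(\tau v)=\tau^qQ_\mu(v)$ and $F(ru,\tau v)=r^\alpha\tau^\beta F(u,v)$. Hence it extends to a functional $\tilde J_\sigma$ on the open cone $\mathcal C_\sigma=\{(u,v):P_\lambda(u)<0,\ Q_\mu(v)<0,\ F(u,v)<0\}$ with $\hat J_\sigma=\inf_{\mathcal C_\sigma}\tilde J_\sigma=\inf_{\Theta_\sigma}J_\sigma$. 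Working on $\mathcal C_\sigma$ lets me normalize any minimizing sequence on $S$ without loss of generality.

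So I take $(u_n,v_n)\in\Theta_\sigma$ with $J_\sigma(u_n,v_n)\to\hat J_\sigma$. Since $\|u_n\|_{1,p}=1$ and $P_\lambda(u_n)<0$ force $\int|u_n|^p>1/\lambda>0$, and similarly $\int|v_n|^q>1/\mu>0$, the weak limits $u_n\rightharpoonup u$, $v_n\rightharpoonup v$ (along a subsequence) are nonzero. The compact Sobolev embeddings together with the subcritical condition in \eqref{alphabeta} give $u_n\to u$ in $L^p(\Omega)$, $v_n\to v$ in $L^q(\Omega)$ and $F(u_n,v_n)\to F(u,v)\le 0$, while weak lower semicontinuity of the gradient norms yields $|P_\lambda(u)|\ge\limsup|P_\lambda(u_n)|$ and $|Q_\mu(v)|\ge\limsup|Q_\mu(v_n)|$. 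Consequently, if the limit $(u,v)$ lies in $\mathcal C_\sigma$, then $\tilde J_\sigma(u,v)\le\liminf\tilde J_\sigma(u_n,v_n)=\hat J_\sigma$, so after rescaling onto $S$ it is a minimizer and $\mathcal S_\sigma\neq\emptyset$. Everything therefore reduces to excluding that $(u,v)$ lands on the boundary of $\mathcal C_\sigma$.

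Two of the three boundary possibilities are cheap. If $P_\lambda(u)=0$ (or $Q_\mu(v)=0$) while $F(u,v)<0$, then $|P_\lambda(u_n)|\to 0$ whereas $|F(u_n,v_n)|\to|F(u,v)|>0$ and $|Q_\mu(v_n)|$ stays bounded, so $J_\sigma(u_n,v_n)\to 0$; but $\Theta_\sigma\neq\emptyset$ forces $\hat J_\sigma<0$, a contradiction. The decisive case is the graceful degeneration $F(u,v)=0$ with $P_\lambda(u),Q_\mu(v)\to0$ at a rate that keeps the quotient in \eqref{J1} bounded, and I expect this to be the main obstacle. Here I would use that $\sigma$ lies on $\gamma^*$: by Proposition \ref{minimiproblems} the relations $F(u,v)=0$, $P_\lambda(u)\le0$, $Q_\mu(v)\le0$ at $\sigma=(\lambda,\mu_{ext}(\lambda))$ force $P_\lambda(u)=Q_\mu(v)=0$, so $(u,v)\in\mathcal N_\sigma^0$; in particular $P_\lambda(u_n)\to0$ with $\int|u_n|^p\to\int|u|^p$ upgrades the convergence to $u_n\to u$ strongly in $W_0^{1,p}(\Omega)$, and likewise $v_n\to v$ in $W_0^{1,q}(\Omega)$. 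To rule this configuration out I would approximate from below, choosing $\sigma_k\in\gamma^*_-$ with $\sigma_k\uparrow\sigma$, invoking Lemma \ref{globalmin} and the monotonicity of Corollary \ref{decrea} to identify $\hat J_\sigma=\lim_k\hat J_{\sigma_k}$, and then deriving a uniform lower bound for $\hat J_{\sigma_k}$ from the elliptic estimates of \cite{Lieberman92} applied to the solutions $(t_{\sigma_k}u_k,s_{\sigma_k}v_k)$ of \eqref{pq}. This keeps $\hat J_\sigma$ finite and, combined with the strong convergence just obtained, contradicts $F(u,v)=0$. This is the quantitative heart of the statement and is where the auxiliary result of the Appendix should enter.

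Finally, once a minimizer $(u,v)\in\Theta_\sigma$ is available, Proposition \ref{criticalpoint} makes $(t_\sigma u,s_\sigma v)$ a solution of \eqref{pq}; the regularity of \cite{Lieberman92} and the maximum principle of \cite{Vazquez84} give $u,v>0$ in $\Omega$. Testing the first equation against $t_\sigma u$ gives $P_\lambda(u)=\alpha t_\sigma^{\alpha-p}s_\sigma^\beta F(u,v)<0$, and similarly $Q_\mu(v)<0$, so $\lambda-\frac{\int|\nabla u|^p}{\int|u|^p}>0$ and $\mu-\frac{\int|\nabla v|^q}{\int|v|^q}>0$ strictly. To promote this to a uniform gap over all of $\mathcal S_\sigma$, I note that any sequence of minimizers is itself a minimizing sequence, so the compactness established above makes $\mathcal S_\sigma$ precompact in $S$; continuity of $(u,v)\mapsto\int fu^\alpha v^\beta$ and of the two Rayleigh quotients then bounds $\sup_{\mathcal S_\sigma}\frac{\int|\nabla u|^p}{\int|u|^p}$ strictly below $\lambda$ and $\sup_{\mathcal S_\sigma}\frac{\int|\nabla v|^q}{\int|v|^q}$ strictly below $\mu$. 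Choosing $\nu_\sigma$ between the first supremum and $\lambda$ (necessarily $\nu_\sigma>\lambda_1$, as every Rayleigh quotient is at least $\lambda_1$) and $\bar\nu_\sigma$ analogously between the second supremum and $\mu$ yields $P_{\nu_\sigma}(u)\le0$ and $Q_{\bar\nu_\sigma}(v)\le0$ for every $(u,v)\in\mathcal S_\sigma$, which is the asserted conclusion.
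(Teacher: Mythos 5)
Your skeleton (direct minimization at $\sigma$, splitting off the easy boundary cases, then a uniform-gap argument via compactness of $\mathcal{S}_\sigma$) is reasonable, and the two ``cheap'' boundary cases are handled correctly, but the decisive step is not proved. You correctly isolate the degenerate alternative $F(u,v)=0$ with $P_\lambda(u_n),Q_\mu(v_n)\to 0$, yet your way of excluding it --- ``deriving a uniform lower bound for $\hat{J}_{\sigma_k}$ from the elliptic estimates of \cite{Lieberman92}'' and then claiming that finiteness of $\hat{J}_\sigma$ together with strong convergence ``contradicts $F(u,v)=0$'' --- is not an argument. The results of \cite{Lieberman92} are $C^{1,\alpha}$ regularity estimates; they do not produce energy lower bounds. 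Worse, arriving at a limit with $P_\lambda(u)=Q_\mu(v)=F(u,v)=0$ is not in itself contradictory: by Proposition \ref{minimiproblems} such configurations exist at \emph{every} $\sigma\in\gamma^*$ (the minimizers of $\mu_{ext}(\lambda)$ are exactly of this form), and by Proposition \ref{unboun} they are precisely what makes $\hat{J}_{\sigma'}=-\infty$ for $\sigma'>\sigma$. So even granting finiteness of $\hat{J}_\sigma$, your chain of deductions does not close; and finiteness itself cannot be obtained from a generic minimizing sequence at $\sigma$, because such a sequence satisfies no equation.

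What is actually needed --- and what the paper does in Proposition \ref{extre1} --- is finer information than $F(u,v)=0$, namely the support condition $fu^{\alpha-1}v^{\beta}=0$ a.e., i.e.\ $\operatorname{supp}(uv)\subset\Omega^0$, which contradicts hypotheses $f_1$/$f_2$ through the Lagrange multiplier and maximum principle argument of Lemma \ref{extremequal}. To get it, the paper takes $\sigma_n\in\gamma^*_-$ with $\sigma_n\to\sigma$ and works with actual minimizers $(u_n,v_n)\in\mathcal{S}_{\sigma_n}$, which by Lemma \ref{globalmin} solve the system \eqref{pq1} with fibering scalings $t_n,s_n$. If $\hat{J}_{\sigma_n}\to-\infty$, the identities \eqref{J2}--\eqref{J3} force $t_n,s_n\to\infty$; the equations and the assumption $\alpha>p$ from \eqref{alphabeta} then yield $fu^{\alpha-1}v^{\beta}=0$ in the limit, the limit solves $\sigma^*$, and the contradiction follows. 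The same equations plus the $S^+$ property of $-\Delta_p,-\Delta_q$ give the strong convergence $(u_n,v_n)\to(u,v)$ with $P_\lambda(u)<0$, $Q_\mu(v)<0$, $F(u,v)<0$, identifying the limit as an element of $\mathcal{S}_\sigma$; your substitute for this (uniform convexity from $P_\lambda(u_n)\to 0=P_\lambda(u)$) only operates inside the degenerate case you are trying to exclude. Finally, the uniform gap of items i) and ii) is proved in Proposition \ref{Ssigma} by the same mechanism: a sequence of minimizers violating the gap limits onto a configuration with $P=Q=F=0$ and $\operatorname{supp}(uv)\subset\Omega^0$, again contradicting Lemma \ref{extremequal}. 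Your separation step is the right idea, but as written it rests entirely on the compactness argument that is missing.
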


The items of Lemma \ref{minextre}  says that the minimizers of $\Phi_\sigma$ over the Nehari manifold $\mathcal{N}_\sigma^+$ are separated from the Nehari set $\mathcal{N}_\sigma ^0$. This will be important later to show existence of solutions near the extremal parameter curve. We divide the proof of Lemma \ref{minextre} in two propositions:

\begin{prop}\label{extre1}
	Suppose that $\sigma=(\lambda,\mu)\in \gamma^*$. Assume that $\sigma_n\in \gamma^*_-$ and $\sigma_n\to \sigma$ as $n\to \infty$. Then, $\hat{J}_{\sigma_n}\to \hat{J}_\sigma$ as $n\to \infty$ and $ \hat{J}_\sigma>-\infty$. Moreover, $\mathcal{S}_\sigma\neq \emptyset$.
\end{prop}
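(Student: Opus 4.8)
The plan is to prove Proposition~\ref{extre1} by combining the monotonicity of $\hat J_\sigma$ in $\sigma$ (Corollary~\ref{decrea}), the structural information about the extremal curve (Proposition~\ref{minimiproblems} and Lemma~\ref{extrecurvepa}), and a compactness argument to produce a minimizer at the boundary parameter.

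Let me sketch the proof.

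=== PROOF PROPOSAL ===

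\begin{proof}[Proof proposal for Proposition~\ref{extre1}]
The plan is to first establish the convergence $\hat J_{\sigma_n}\to \hat J_\sigma$ together with the finiteness $\hat J_\sigma>-\infty$, and then to extract a minimizer by a concentration-compactness type argument on $S$.

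First I would record the monotonicity. Since $\sigma_n\in\gamma^*_-$ and $\sigma_n\to\sigma\in\gamma^*$, I may pass to a monotone subsequence. If $\sigma_n\uparrow\sigma$ along a cofinal sequence, Corollary~\ref{decrea} gives that $n\mapsto\hat J_{\sigma_n}$ is nonincreasing and bounded below by $\hat J_\sigma$ in the sense that every admissible test pair for $\sigma$ remains admissible for the slightly smaller $\sigma_n$; thus the limit $L:=\lim_n\hat J_{\sigma_n}$ exists in $[-\infty,\infty)$. To see $L>-\infty$ and $L=\hat J_\sigma$, the crucial input is Lemma~\ref{extrecurvepa}: on the extremal curve itself, items (iii) and (vii) (equivalently Corollary~\ref{solutions}) guarantee that $P_\lambda(u)\le 0$ and $Q_\mu(v)\le 0$ force $F(u,v)<0$, so $\Theta_\sigma\neq\emptyset$ is genuinely separated from the degenerate set where $F=0$. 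This separation is exactly what prevents $|F(u,v)|$ from degenerating to zero in the denominator of~\eqref{J1} along a minimizing sequence, and it is the reason $\hat J_\sigma$ stays finite precisely on $\gamma^*$ (as opposed to $\gamma^*_+$, where Lemma~\ref{extrecurvepa}(iv),(viii) give $\hat J_\sigma=-\infty$).

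Next I would produce the minimizer. Take a minimizing sequence $(u_n,v_n)\in\Theta_\sigma$ for $\hat J_\sigma$, normalized so that $\|u_n\|_{1,p}=\|v_n\|_{1,q}=1$. Since $S$ is bounded, pass to subsequences so that $u_n\rightharpoonup u$ in $W_0^{1,p}(\Omega)$ and $v_n\rightharpoonup v$ in $W_0^{1,q}(\Omega)$, with strong convergence $u_n\to u$ in $L^p$, $v_n\to v$ in $L^q$, and—using the subcriticality condition $\frac{\alpha}{p^*}+\frac{\beta}{q^*}<1$ in~\eqref{alphabeta}—strong convergence of the coupling term so that $F(u_n,v_n)\to F(u,v)$. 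The weak lower semicontinuity of the norms gives $P_\lambda(u)\le\liminf P_\lambda(u_n)$ and $Q_\mu(v)\le\liminf Q_\mu(v_n)$, hence $P_\lambda(u)\le 0$ and $Q_\mu(v)\le 0$; by the separation established above, $F(u,v)<0$, so in particular $u\neq 0$ and $v\neq 0$. I then argue $(u,v)$ (after rescaling back to $S$) is the desired minimizer: the numerator of~\eqref{J1} can only drop under weak limits while the denominator converges, so $J_\sigma(u,v)\le\liminf J_\sigma(u_n,v_n)=\hat J_\sigma$, forcing equality. This places $(u,v)\in\mathcal S_\sigma$ and shows $\mathcal S_\sigma\neq\emptyset$.

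The main obstacle I expect is controlling the denominator $|F(u,v)|$ in~\eqref{J1} along the minimizing sequence: a priori the infimum could be approached by pairs with $F(u_n,v_n)\to 0^-$, which would make $J_\sigma\to-\infty$ and preclude a minimizer. The resolution is precisely the separation property on $\gamma^*$ from Lemma~\ref{extrecurvepa}(iii),(vii): because every pair with $P_\lambda\le 0$, $Q_\mu\le 0$ is bounded away from $F=0$ in the relevant sense, together with the strong $L^p\times L^q$ convergence coming from the strict subcriticality in~\eqref{alphabeta}, the coupling term $F$ is continuous along the sequence and cannot collapse. A secondary point to handle carefully is the identification $L=\hat J_\sigma$, where I would use the separation together with the explicit monotonicity in Corollary~\ref{decrea} to sandwich $\hat J_{\sigma_n}$ between values of $J$ at fixed admissible test pairs, exactly as in the continuity argument for $\mu_{ext}$ in Proposition~\ref{extecurvepro}.
\end{proof}
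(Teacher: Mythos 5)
Your argument has a genuine gap, and it sits exactly at the point you call ``the crucial input.'' You invoke Lemma~\ref{extrecurvepa}(iii),(vii) (equivalently Corollary~\ref{solutions}) \emph{on the curve} $\gamma^*$ to claim that $P_\lambda(u)\le 0$ and $Q_\mu(v)\le 0$ force $F(u,v)<0$, and you use this separation both to keep the denominator $|F|$ in \eqref{J1} away from zero (hence $\hat J_\sigma>-\infty$) and to show that the weak limit of a minimizing sequence stays admissible. But those items are stated only for parameters \emph{strictly below} the curve ($\lambda\in(\lambda_1,\lambda^*)$ and $\mu^*\le\mu<\mu_{ext}(\lambda)$, resp.\ the symmetric case); on $\gamma^*$ itself the separation is false. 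Indeed, by Proposition~\ref{minimiproblems}, for $\sigma=(\lambda,\mu_{ext}(\lambda))$ the problem $\mu_{ext}(\lambda)$ has a minimizer $(u,v)$ with $P_\lambda(u)=0$, $F(u,v)=0$ and $Q_\mu(v)=0$ at the curve parameter; that is, on $\gamma^*$ there exist pairs with $P_\lambda\le 0$, $Q_\mu\le 0$ and $F=0$. (This is precisely why Theorem~\ref{AKGLOBAL}(i) produces a solution with \emph{zero} energy on the curve, and why the paper emphasizes that the difficulty begins when $\mathcal N^0_\sigma$ touches the boundary of $\mathcal N^+_\sigma$.) Consequently, along a minimizing sequence in $\Theta_\sigma$ nothing prevents $F(u_n,v_n)\to 0^-$, and your argument gives no control on the ratio in \eqref{J1}: both the finiteness of $\hat J_\sigma$ and the admissibility of the weak limit are left unproved.

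This is why the paper's proof takes a different route: it does not minimize at $\sigma$ directly, but takes minimizers $(u_n,v_n)\in\mathcal S_{\sigma_n}$ at the nearby parameters $\sigma_n\in\gamma^*_-$ (Lemma~\ref{globalmin}), which are genuine solutions of the system \eqref{pq1}. The possibility $\hat J_{\sigma_n}\to-\infty$ is excluded not by any separation property, but by the identities \eqref{J2}--\eqref{J3}, which would force the fibering scalars $t_n,s_n\to\infty$; the equations \eqref{pq1} together with $\alpha>p$ then yield $fu^{\alpha-1}v^\beta=0$ a.e., so the weak limit solves $\sigma^*$ with $\operatorname{supp}(uv)\subset\Omega^0$, contradicting hypothesis $f_1$ or $f_2$ exactly as in Lemma~\ref{extremequal}. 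Once $\hat J_{\sigma_n}$ is bounded, the scalars $t_n,s_n$ stay bounded and the $S^+$ property of $-\Delta_p,-\Delta_q$ applied to \eqref{pq1} upgrades weak to \emph{strong} convergence, which is what places the limit in $\Theta_\sigma$ with the strict inequalities $P_\lambda(u)<0$, $Q_\mu(v)<0$, $F(u,v)<0$. Your final sandwich step identifying $\lim\hat J_{\sigma_n}$ with $\hat J_\sigma$ does match the paper's last step, but it only makes sense after finiteness and compactness have been secured by this PDE-based argument; that is the machinery your proposal would need to import.
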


\begin{proof} Define $\sigma_n=(\lambda_n,\mu_n)$. From the Lemma \ref{globalmin}, for each $n$, choose $(u_n,v_n)\in \mathcal{S}_{\sigma_n}$ such that

	\begin{equation}\label{pq1}
	\left\{
	\begin{aligned}
	-\Delta_p u_{n} &= \lambda_n {u_n}^{p-1}+\alpha t_n^{\alpha-p}s_n^\beta fu_{n}^{\alpha-1}v_{n}^\beta  
	&\mbox{in}\ \ \Omega \\ 
	-\Delta_q v_{n} &= \mu_n v_{n}^{q-1}+\beta t_n^{\alpha}s_n^{\beta-q} fu_{n}^{\alpha}u_{n}^{\beta-1}
	&\mbox{in}\ \ \Omega 
	\end{aligned}
	\right.
	\end{equation}
	where  $t_n=t_{\sigma_n}(u_n,v_n)$ and $s_n=s_{\sigma_n}(u_n,v_n)$.

	Once $\|u_n\|_{1,p}=\|v_{n}\|_{1,q}=1$, we can assume that 
	$(u_n,v_n)\rightharpoonup (u,v)$ in $W_0^{1,p}(\Omega)\times 	W_0^{1,q}(\Omega)$, $(u_n,v_n)\to (u,v)$ in $L^p(\Omega)\times 	L^q(\Omega)$. Since $P_{\lambda_n}(u_n),H_{\mu_n}(v_n)<0$, we have that  $u,v\neq 0$.
	
	From the Proposition \ref{decrea}, we may assume that there exists a negative constant $c$ such that $c>\hat{J}_{\sigma_n}$ for each $n$. We claim that  $\hat{J}_{\sigma_n}$ is bounded. Indeed suppose on the contrary that up to a subsequence $\hat{J}_{\sigma_n}\to-\infty$ as $n\to \infty$, then from (\ref{J1}) we obtain that $F(u,v)=\lim F(u_n,v_n)=0$. For $(u,v)\in \Theta_{\sigma}$ observe from \eqref{t}, \eqref{s} and \eqref{J1} that
	\begin{equation}\label{J2}
	J_{\sigma_n}(u_n,v_n)=-C_1t_n^p|P_{\lambda_n}(u_n)|,
	\end{equation}
	and
	
	\begin{equation}\label{J3}
	J_{\sigma_n}(u_n,v_n)=-C_2s_n^q|Q_{\mu_n}(v_n)|,
	\end{equation}
	where $C_1,C_2>0$.
	
	Once $|P_\lambda(u)|,|Q_\mu(v)|$ are bounded in $\Theta_{\sigma}$, it follows from (\ref{J2}) and (\ref{J3}) that $t_n,s_n\to \infty$ as $n\to \infty$, however, from (\ref{pq1}) and $\alpha> p$, we conclude that $fu^{\alpha-1}v^\beta=0$ a.e. in $\Omega$. Observe that $F(u,v)=0$ and from the Lemma \ref{extrecurvepa} it follows that $P_{\lambda_n}(u_n)$, $Q_{\mu_n}(v_n)$ converge to zero as $n\to \infty$ and $(u,v)$ solves $\sigma^*$, however, arguing as in the Lemma \ref{extremequal} we reach a contradiction.
	
	Therefore there exists $C<0$ such that  $c>\hat{J}_{\sigma_n}>C$ for each $n$, and from \eqref{J1}, \eqref{J2} and \eqref{J3}, we may assume that $t_n,s_n\to t,s$ with $t,s\in (0,\infty)$. From the $S^+$ property of $-\Delta_p,-\Delta_q$ (see \cite{drabekMilota}), and (\ref{pq1}) we obtain that $(u_n,v_n)\to (u,v)$ in $W_0^{1,p}(\Omega)\times W_0^{1,q}(\Omega)$. It follows that $P_{\lambda}(u)<0$, $Q_\mu(v)<0$, $F(u,v)<0$ and hence $\hat{J}_{\sigma_n}\to I=J_\sigma (u,v)$ as $n\to \infty$, where $c\ge I\ge C$.
	
	 Now we claim that $I=\hat{J}_\sigma$. Suppose ad absurdum that $I>\hat{J}_\sigma$. Choose $(w,\overline{w})\in\Theta_\sigma$ such that $I>J_\sigma(w,\overline{w})\ge \hat{J}_\sigma$. Observe from \eqref{J1} that
	
	\begin{equation*}
	\lim_{n\to \infty}(J_{\sigma_n}(w,\overline{w})-J_\sigma (w,\overline{w}))=0,
	\end{equation*}
	and therefore there exists $n$ such that $\hat{J}_{\sigma_n}>I>J_{\sigma_n}(w,\overline{w})$, which is clearly a contradiciton and consequently 
	
	\begin{equation*}
	\lim_{n\to \infty}\hat{J}_{\sigma_n}=J_\sigma(u,v)=I=\hat{J}_\sigma.
	\end{equation*}	
\end{proof}

Now we study the sets $\mathcal{S}_\sigma$ when $\sigma\in \gamma^*$.

\begin{prop}\label{Ssigma} Suppose that $(\lambda,\mu)\in (\lambda_1,\lambda^*]\times (\mu_1,\mu^*]$. There holds
	
	\begin{description}
		\item[i)] if $\sigma=(\lambda,\mu_{ext}(\lambda))$, then there exists $(\nu_\sigma,\bar{\nu}_\sigma)\in (\lambda_1,\lambda)\times (\mu_1,\mu_{ext}(\lambda))$ such that $P_{\nu_\sigma}(u)\le 0$ and $Q_{\bar{\nu}_\sigma}(v)\le 0$ for each $(u,v)\in \mathcal{S}_\sigma$;
		\item[ii)] if $\sigma=(\lambda_{ext}(\mu),\mu)$, then there exists $(\nu_\sigma,\bar{\nu}_\sigma)\in (\lambda_1,\lambda_{ext}(\mu))\times (\mu_1,\mu)$ such that $P_{\nu_\sigma}(u)\le 0$ and $Q_{\bar{\nu}_\sigma}(v)\le 0$ for each $(u,v)\in \mathcal{S}_\sigma$.
\end{description} 
\end{prop}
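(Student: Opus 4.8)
The plan is to prove i); part ii) is obtained by interchanging the roles of $(\lambda,\lambda_1,\varphi_1)$ and $(\mu,\mu_1,\psi_1)$, and since for a fixed $\sigma$ the two claims $P_{\nu_\sigma}(u)\le 0$ and $Q_{\bar\nu_\sigma}(v)\le 0$ are symmetric, I only detail the construction of $\nu_\sigma$. Writing $R_p(u)=\int|\nabla u|^p/\int|u|^p$, it suffices to show that $\sup\{R_p(u):(u,v)\in\mathcal S_\sigma\}<\lambda$, because then any $\nu_\sigma$ chosen in the nonempty interval $(\max\{\lambda_1,\sup R_p\},\lambda)$ satisfies $P_{\nu_\sigma}(u)\le 0$ for every minimizer. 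I argue by contradiction: suppose there is a sequence $(u_n,v_n)\in\mathcal S_\sigma$ with $R_p(u_n)\to\lambda$, normalized so that $\|u_n\|_{1,p}=\|v_n\|_{1,q}=1$; then $P_\lambda(u_n)=1-\lambda\int|u_n|^p\to 0$.

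Since $\hat J_\sigma\in(-\infty,0)$ by Proposition \ref{extre1} and each $(u_n,v_n)\in\mathcal S_\sigma$, the identity \eqref{J1} together with the boundedness of $|Q_\mu(v_n)|$ and $|F(u_n,v_n)|$ on $S$ forces $F(u_n,v_n)\to 0$, while \eqref{J2}--\eqref{J3} give $t_n:=t_\sigma(u_n,v_n)\to\infty$ and $s_n:=s_\sigma(u_n,v_n)$ bounded away from $0$. Passing to a subsequence, $(u_n,v_n)\rightharpoonup(u,v)$ in $W_0^{1,p}(\Omega)\times W_0^{1,q}(\Omega)$ and $(u_n,v_n)\to(u,v)$ in $L^p(\Omega)\times L^q(\Omega)$; by the subcritical condition in \eqref{alphabeta} one also has $F(u_n,v_n)\to F(u,v)$, so $F(u,v)=0$. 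The constraints defining $\Theta_\sigma$ and weak lower semicontinuity give $u,v\neq 0$, $P_\lambda(u)\le 0$ and $Q_\mu(v)\le 0$. Now $(u,v)$ is admissible for the infimum defining $\mu_{ext}(\lambda)$ (as $P_\lambda(u)\le 0$ and $F(u,v)=0\ge 0$); because $\mu=\mu_{ext}(\lambda)$ this yields $Q_\mu(v)\ge 0$, hence $Q_\mu(v)=0$, and then $P_\lambda(u)=0$ since $(u,v)$ realizes $\mu_{ext}(\lambda)$ (Proposition \ref{minimiproblems}). Thus $(u,v)$ solves $\mu_{ext}(\lambda)$ and Proposition \ref{minimiproblems} provides $t,s>0$ such that $(tu,sv)$ is a positive solution of \eqref{pq}.

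To reach a contradiction I exploit the blow-up of the fibering parameters. Testing the first equation in \eqref{pq1} against $\phi\in C_c^\infty(\Omega)$ gives
\begin{equation*}
\alpha\,t_n^{\alpha-p}s_n^\beta\int f u_n^{\alpha-1}v_n^\beta\phi=\int|\nabla u_n|^{p-2}\nabla u_n\cdot\nabla\phi-\lambda\int u_n^{p-1}\phi,
\end{equation*}
whose right-hand side stays bounded; since $t_n^{\alpha-p}s_n^\beta\to\infty$ when $\alpha>p$ (and symmetrically $t_n^\alpha s_n^{\beta-q}\to\infty$ in the second equation of \eqref{pq1} when $\beta>q$), the strong $L^p(\Omega)\times L^q(\Omega)$-convergence lets me pass to the limit and conclude $f u^{\alpha-1}v^\beta=0$ a.e., i.e. $\operatorname{supp}(uv)\subset\Omega^0$. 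This is precisely the configuration excluded in the proof of Lemma \ref{extremequal}: combining the regularity theory of \cite{Lieberman92} with the strong maximum principle of \cite{Vazquez84} applied to the positive solution $(tu,sv)$ shows $u,v>0$ on $\operatorname{int}(\Omega^0\cup\Omega^+)$ (respectively a.e. in $\Omega$ under $f_1$), so $uv>0$ on a set of positive measure meeting $\Omega^+$, contradicting $\operatorname{supp}(uv)\subset\Omega^0$.

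The main obstacle is the compactness step: identifying the weak limit $(u,v)$ as a genuine solution of the extremal problem $\mu_{ext}(\lambda)$ — which requires the convergence $F(u_n,v_n)\to F(u,v)$ and the upgrade $Q_\mu(v)=P_\lambda(u)=0$ — so that the regularity plus maximum-principle argument of Lemma \ref{extremequal} can be reused against $\operatorname{supp}(uv)\subset\Omega^0$. Once the $P$-separation is established, the $Q$-separation (producing $\bar\nu_\sigma$) follows by assuming instead $R_q(v_n)\to\mu$, which forces $s_n\to\infty$ through \eqref{J3} and runs the identical argument, and part ii) follows verbatim after exchanging the two components.
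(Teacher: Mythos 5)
Your proposal is correct and follows essentially the same strategy as the paper's own proof: argue by contradiction with a sequence of minimizers in $\mathcal{S}_\sigma$ whose $P_\lambda$ (or $Q_\mu$) value degenerates to zero, use the fixed negative level $\hat{J}_\sigma$ together with \eqref{J1}--\eqref{J3} to force $F(u_n,v_n)\to 0$ and blow-up of the fibering parameters, identify the weak limit as a solution of the extremal problem $\mu_{ext}(\lambda)$ with $P_\lambda(u)=Q_\mu(v)=F(u,v)=0$ and $\operatorname{supp}(uv)\subset\Omega^0$, and conclude via the regularity/maximum-principle argument of Lemma \ref{extremequal}. Your decoupled formulation through $\sup R_p$ and $\sup R_q$, and your explicit final contradiction, are only cosmetic reorganizations of the steps the paper leaves implicit in ``It follows that\dots'' and ``Arguing as in Proposition \ref{extre1}\dots''.
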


\begin{proof} We prove $\mathbf{i)}$ (the proof of $\mathbf{ii)}$ being similar). Indeed, fix $\sigma=(\lambda,\mu_{ext}(\lambda))$ and suppose on the contrary that for each $(\nu_\sigma,\bar{\nu}_\sigma)\in (\lambda_1,\lambda)\times (\mu_1,\mu_{ext}(\lambda))$, there exists $(u_{\sigma},v_\sigma)\in \mathcal{S}_\sigma$ such that $P_{\nu_\sigma}(u_\sigma)> 0$ or $Q_{\overline{\nu}_\sigma}(v_\sigma)> 0$. Therefore, given a sequence $(\nu_n,\bar{\nu}_n)$ such that $\nu_n\to \lambda$ and $\bar{\nu}_n\to \mu_{ext}(\lambda)$ as $n\to \infty$,	 there exist sequence $(u_n,v_n)\in  \mathcal{S}_{\sigma}$ such that $P_{\nu_n}(u_n)> 0$ or $Q_{\overline{\nu}_n}(v_n)> 0$ for each $n$. Once $\|u_n\|_{1,p}=\|v_{n}\|_{1,q}=1$, we can assume that 
$(u_n,v_n)\rightharpoonup (u,v)$ in $W_0^{1,p}(\Omega)\times 	W_0^{1,q}(\Omega)$, $(u_n,v_n)\to (u,v)$ in $L^p(\Omega)\times 	L^q(\Omega)$. Since $P_{\lambda}(u_n),Q_{\mu_{ext}(\lambda)}(v_n)<0$, we have that  $u,v\neq 0$.

Note that for all $n$ it follows that $0>P_\lambda(u_n)=P_{\nu_n}(u_n)+(\nu_n-\lambda)\int |u_n|^p\ge (\nu_n-\lambda)\int |u_n|^p$ and $0>Q_\mu(v_n)=Q_{\bar{\nu}_n}(v_n)+(\bar{\nu}_n-\lambda)\int |v_n|^q\ge (\bar{\nu}_n-\lambda)\int |v_n|^q$, which implies that at least one of the sequences $P_\lambda(u_n),Q_\mu(v_n)$ converge to zero as $n\to \infty$.  From \eqref{J1} and the Proposition \ref{extre1} it follows $F(u_n,v_n)\to 0$ as $n\to \infty$. It follows that $P_\lambda(u)=Q_{\mu_{ext}}(v)=F(u,v)=0$ and $\operatorname{supp}(uv)\subset \Omega^0$. Arguing as in the Proposition \ref{extre1} we reach a contradiction and therefore, there exists $(\nu_\sigma,\bar{\nu}_\sigma)\in (\lambda_1,\lambda)\times (\mu_1,\mu_{ext}(\lambda))$ such that $P_{\nu_\sigma}(u)\le 0$ and $Q_{\bar{\nu}_\sigma}(v)\le 0$ for each $(u,v)\in \mathcal{S}_\sigma$.

\end{proof}

Now we prove the Lemma \ref{minextre}. 

\begin{proof}[Proof of the Lemma \ref{minextre}] It follows from the Propositions \ref{extre1} and \ref{Ssigma}.

\end{proof}
\section{Local minimizers for $J_\sigma$ when $\sigma\in\gamma^*_+$}\label{over*}

Assume that $\sigma=(\lambda,\mu)\in \gamma^*_+$ and $\omega=(\nu,\bar{\nu})\in \gamma^*_-$ with $\nu<\lambda$ and $\bar{\nu}<\mu$. Define

\begin{equation*}
\Theta_{\sigma,\omega}=\{(u,v)\in \Theta_\sigma:\ P_\nu(u)< 0,\ Q_{\overline{\nu}}(v)< 0\},
\end{equation*}
and the closure of $\Theta_{\sigma,\omega}$ with respect to the norm topology

\begin{equation*}
\overline{\Theta}_{\sigma,\omega}=\{(u,v)\in \Theta_\sigma:\ P_\nu(u)\le 0,\ Q_{\overline{\nu}}(v)\le 0\},
\end{equation*}
and observe that $\overline{\Theta}_{\sigma,\omega}\subset \Theta_\sigma$. Consider the following constrained minimization problem

\begin{equation*}
\hat{J}_{\sigma,\omega}=\inf\{J_\sigma(u,v):\ (u,v)\in \overline{\Theta}_{\sigma,\omega}\},
\end{equation*}
and
\begin{equation*}
\mathcal{S}_{\sigma,\omega}=\{(u,v)\in \overline{\Theta}_{\sigma,\omega}:\ \hat{J}_{\sigma,\omega}=J_\sigma(u,v)\}.
\end{equation*}

We are interested in the minimizers of $\hat{J}_{\sigma,\omega}$ which belongs to the interior of $\overline{\Theta}_{\sigma,\omega}$, therefore, let us consider the following set

\begin{equation*}
\mathring{\mathcal{S}}_{\sigma,\omega}=\{(u,v)\in \Theta_{\sigma,\omega}:\ \hat{J}_{\sigma,\omega}=J_\sigma(u,v)\}.
\end{equation*}

Observe from the Proposition \ref{unboun} that $\hat{J}_\sigma=-\infty$ if $\sigma \in \gamma^*_+$. However, at least for those parameters which lies near $\gamma^*$, we will see as a consequence of the Lemma \ref{minextre} that $\Phi_\sigma$ still has local minimizers over the Nehari manifold $\mathcal{N}_\sigma^+$.

\begin{lem}\label{locmin} For each $\sigma=(\lambda,\mu)\in \gamma^*$, choose $(\nu_\sigma,\bar{\nu}_\sigma)$ as in the Lemma \ref{minextre}. Suppose that $(\nu_\sigma,\bar{\nu}_\sigma)<\omega\in \gamma^*_-$, then there exists $\varepsilon_\sigma>0$ such that for each $\bar{\sigma}\in (\lambda,\lambda+\varepsilon_\sigma)\times (\mu,\mu+\varepsilon_\sigma)$ there holds $\mathring{\mathcal{S}}_{\bar{\sigma},\omega}\neq \emptyset$.
\end{lem}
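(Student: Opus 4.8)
The plan is to reduce the statement to minimizing a \emph{varying} functional over an essentially \emph{fixed} constraint set, and then to propagate to nearby parameters $\bar{\sigma}$ the interior character of the minimizers at $\sigma$ supplied by Lemma~\ref{minextre}. Say $\sigma=(\lambda,\mu_{ext}(\lambda))$, the other case being symmetric, and let $(\nu_\sigma,\bar{\nu}_\sigma)\in(\lambda_1,\lambda)\times(\mu_1,\mu_{ext}(\lambda))$ be as there. I would take $\omega=(\nu,\bar{\nu})$ with $\nu\in(\nu_\sigma,\lambda)$ and $\bar{\nu}\in[\mu^*,\mu)$, which is possible because $\mu=\mu_{ext}(\lambda)>\mu^*$ by Lemma~\ref{extrecurvepa}; then $\omega\in\gamma^*_-$ lies in the range of Corollary~\ref{solutions}.

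First I would observe that for $\bar{\sigma}=(\bar{\lambda},\bar{\mu})$ with $\bar{\lambda}>\nu$ and $\bar{\mu}>\bar{\nu}$ the set $\overline{\Theta}_{\bar{\sigma},\omega}$ does not depend on $\bar{\sigma}$. Indeed $P_\nu(u)\le 0$ forces $P_{\bar{\lambda}}(u)=P_\nu(u)-(\bar{\lambda}-\nu)\int|u|^p<0$, and likewise $Q_{\bar{\mu}}(v)<0$, so the defining inequalities of $\Theta_{\bar{\sigma}}$ are automatic; moreover $\omega\in\gamma^*_-$ and Corollary~\ref{solutions} give $F(u,v)<0$. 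Hence $\overline{\Theta}_{\bar{\sigma},\omega}=K:=\{(u,v)\in S:\ P_\nu(u)\le 0,\ Q_{\bar{\nu}}(v)\le 0\}$, and only the functional $J_{\bar{\sigma}}$ varies with $\bar{\sigma}$. I would also record from the formula \eqref{J1} that $J_{\bar{\sigma}}$ is invariant under independent scaling of the two components, which lets the direct method tolerate loss of norm under weak limits.

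Next I would prove that $\inf_K|F|>0$: a sequence in $K$ with $F\to 0$ would, after passing to weak limits (strong in $L^p(\Omega)\times L^q(\Omega)$), yield $(u,v)$ with $u,v\neq 0$, $P_\nu(u)\le 0$, $Q_{\bar{\nu}}(v)\le 0$ and $F(u,v)=0$, contradicting the implication attached to $\omega\in\gamma^*_-$. Consequently $J_{\bar{\sigma}}\to J_\sigma$ uniformly on $K$, each $\hat{J}_{\bar{\sigma},\omega}$ is finite and attained (direct method on $K$, using weak lower semicontinuity of $|P_{\bar{\lambda}}|,|Q_{\bar{\mu}}|$, weak continuity of $F$, and the scaling invariance to restore unit norm), and $\hat{J}_{\bar{\sigma},\omega}\to\hat{J}_{\sigma,\omega}$. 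At $\sigma$ itself, Lemma~\ref{minextre} forces every $(u,v)\in\mathcal{S}_\sigma$ to satisfy $P_{\nu_\sigma}(u)\le 0$ and $Q_{\bar{\nu}_\sigma}(v)\le 0$; since $\nu_\sigma<\nu$ and $\bar{\nu}_\sigma<\bar{\nu}$ this upgrades to the \emph{strict} inequalities $P_\nu(u)<0$, $Q_{\bar{\nu}}(v)<0$, so such minimizers lie in the interior $\Theta_{\sigma,\omega}$. In particular $\hat{J}_{\sigma,\omega}=\hat{J}_\sigma$ and $\mathring{\mathcal{S}}_{\sigma,\omega}\neq\emptyset$.

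With these preparations I would argue by contradiction: if there were $\bar{\sigma}_n\downarrow\sigma$ (so $\bar{\sigma}_n\in\gamma^*_+$) with $\mathring{\mathcal{S}}_{\bar{\sigma}_n,\omega}=\emptyset$, then every $(u_n,v_n)\in\mathcal{S}_{\bar{\sigma}_n,\omega}$ would lie on $\partial K$, say $P_\nu(u_n)=0$ along a subsequence. The rescaled weak limit $(u,v)$ would be a minimizer of $J_\sigma$ over $K$ attaining $\lim_n\hat{J}_{\bar{\sigma}_n,\omega}=\hat{J}_{\sigma,\omega}=\hat{J}_\sigma$; equality in the semicontinuity estimate then forces $\int|\nabla u_n|^p\to\int|\nabla u|^p$, and uniform convexity of $W_0^{1,p}(\Omega)$ (equivalently the $S^+$ property used in Proposition~\ref{extre1}) gives $u_n\to u$ strongly, whence $P_\nu(u)=\lim_n P_\nu(u_n)=0$ and $(u,v)\in\mathcal{S}_\sigma$. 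This contradicts the strict inequality $P_\nu(u)<0$ just established for every element of $\mathcal{S}_\sigma$, so $\mathring{\mathcal{S}}_{\bar{\sigma},\omega}\neq\emptyset$ on a box $(\lambda,\lambda+\varepsilon_\sigma)\times(\mu,\mu+\varepsilon_\sigma)$. I expect the recovery of this strong convergence of the boundary minimizers to be the main obstacle: without it one only obtains $P_\nu(u)\le 0$ in the limit and can no longer pin $(u,v)$ to $\partial K$, so the contradiction with Lemma~\ref{minextre} would collapse.
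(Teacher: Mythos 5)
Your proposal is correct, and its skeleton is the same as the paper's: argue by contradiction along parameters $\bar{\sigma}_n\downarrow\sigma$, extract minimizers lying on the boundary of the truncated set, show they converge strongly to an element of $\mathcal{S}_\sigma$ that inherits the boundary equality, and contradict the strict separation $P_{\nu}(u)<0$, $Q_{\bar{\nu}}(v)<0$ which Lemma \ref{minextre} (via Proposition \ref{Ssigma}) imposes on every element of $\mathcal{S}_\sigma$. Where you genuinely diverge is in the supporting machinery, which the paper delegates to Proposition \ref{existence} (attributed to \cite{bobil}) and to the Appendix Proposition \ref{uni}: there, strong convergence of the approximating minimizers is obtained by an infimum-comparison contradiction (if convergence were not strong, weak lower semicontinuity would force $\hat{J}_\sigma<\lim\hat{J}_{\sigma_n,\omega}$, which is then ruled out by testing $J_{\sigma_n}$ at a fixed member of $\mathcal{S}_\sigma$). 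You instead observe that $\overline{\Theta}_{\bar{\sigma},\omega}$ is one fixed set $K$ for all relevant $\bar{\sigma}$, prove $\inf_K|F|>0$ (this is the paper's Proposition \ref{Fbounded}), deduce that $J_{\bar{\sigma}}\to J_\sigma$ uniformly on $K$ --- which yields existence of minimizers and continuity of $\bar{\sigma}\mapsto\hat{J}_{\bar{\sigma},\omega}$ in one stroke --- and then recover strong convergence from equality in the semicontinuity chain together with uniform convexity (Radon--Riesz), i.e.\ the same $S^+$-type mechanism the paper uses elsewhere. This makes the argument self-contained and arguably cleaner than Proposition \ref{uni}. Two small repairs are needed: (1) the lemma quantifies over \emph{every} $\omega\in\gamma^*_-$ with $(\nu_\sigma,\bar{\nu}_\sigma)<\omega$, whereas you fix $\bar{\nu}\in[\mu^*,\mu)$ in order to invoke Corollary \ref{solutions}; this restriction is unnecessary, since the implication $P_\nu(u)\le 0,\ Q_{\bar{\nu}}(v)\le 0\Rightarrow F(u,v)<0$ holds for every $\omega\in\gamma^*_-$ (it is exactly what the definition of the extremal curve gives, and it is the fact you yourself invoke when proving $\inf_K|F|>0$), so cite that instead of Corollary \ref{solutions}; (2) if you do fix $\omega$, you must also take $\bar{\nu}>\bar{\nu}_\sigma$, e.g.\ $\bar{\nu}\in(\max\{\mu^*,\bar{\nu}_\sigma\},\mu)$, otherwise the hypothesis $(\nu_\sigma,\bar{\nu}_\sigma)<\omega$ may fail.
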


To prove Lemma \ref{locmin} we need to study the function $F$ over the sets $ \overline{\Theta}_{\sigma,\omega}$.

\begin{prop}\label{Fbounded} Assume that $\sigma=(\lambda,\mu)\in \gamma^*_+$ and $\omega=(\nu,\bar{\nu})\in \gamma^*_-$ with $\nu<\lambda$, $\bar{\nu}<\mu$. Then, there exists a constant $c_{\sigma,\omega}<0$ such that $F(u,v)<c_{\sigma,\omega}$ for each $(u,v)\in \overline{\Theta}_{\sigma,\omega}$.
\end{prop}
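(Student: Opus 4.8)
The plan is to argue by contradiction, exploiting the compactness provided by the strict subcriticality $\frac{\alpha}{p^*}+\frac{\beta}{q^*}<1$. Since $\overline{\Theta}_{\sigma,\omega}\subset\Theta_\sigma$, we already have $F(u,v)<0$ on the whole set, so the only thing to rule out is that $\sup_{\overline{\Theta}_{\sigma,\omega}}F=0$. Accordingly, I would suppose no negative upper bound exists and pick a sequence $(u_n,v_n)\in\overline{\Theta}_{\sigma,\omega}$ with $F(u_n,v_n)\to 0$. Every element of $\overline{\Theta}_{\sigma,\omega}$ lies on $S$, so $\|u_n\|_{1,p}=\|v_n\|_{1,q}=1$; the sequence is bounded and, up to a subsequence, $(u_n,v_n)\rightharpoonup(u,v)$ in $W_0^{1,p}(\Omega)\times W_0^{1,q}(\Omega)$ and $(u_n,v_n)\to(u,v)$ strongly in $L^p(\Omega)\times L^q(\Omega)$.

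First I would pass the relevant quantities to the limit. The subcritical growth makes $F$ weakly continuous, so $F(u,v)=\lim F(u_n,v_n)=0$. Nontriviality of the limit comes from the constraints: $P_\nu(u_n)\le 0$ forces $1=\|\nabla u_n\|_p^p\le\nu\|u_n\|_p^p$ (in particular $\nu>0$, else $\overline{\Theta}_{\sigma,\omega}=\emptyset$ and the claim is vacuous), whence $\|u_n\|_p^p\ge 1/\nu$ and, by strong $L^p$ convergence, $\|u\|_p^p\ge 1/\nu>0$, giving $u\neq 0$; symmetrically $v\neq 0$. Weak lower semicontinuity of the norms together with the strong $L^p$, $L^q$ convergence then yields $P_\nu(u)\le\liminf P_\nu(u_n)\le 0$ and $Q_{\bar\nu}(v)\le\liminf Q_{\bar\nu}(v_n)\le 0$.

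At this stage I have produced $(u,v)$ with $u,v\neq 0$, $P_\nu(u)\le 0$, $Q_{\bar\nu}(v)\le 0$ and $F(u,v)=0\ge 0$, and I would contradict $\omega=(\nu,\bar\nu)\in\gamma^*_-$. If $\omega$ lies below the point $(\lambda^*,\mu^*)$, i.e. $\nu<\lambda^*$ and $\bar\nu<\mu^*$, then $P_\nu(u)\le 0$ and $Q_{\bar\nu}(v)\le 0$ give $\frac{1}{\lambda_1}\frac{\int|\nabla u|^p}{\int|u|^p}\le\frac{\nu}{\lambda_1}<\sigma^*$ and $\frac{1}{\mu_1}\frac{\int|\nabla v|^q}{\int|v|^q}\le\frac{\bar\nu}{\mu_1}<\sigma^*$, so the maximum of these two quantities is strictly below $\sigma^*$ while $F(u,v)\ge 0$; this is impossible by the definition \eqref{extre} of the extremal parameter $\sigma^*$. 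For the remaining points of $\gamma^*_-$ one is exactly in the range of Corollary \ref{solutions}: with $\sigma=\omega$ the implication there forces $F(u,v)<0$, contradicting $F(u,v)=0$ (equivalently, $(u,v)$ is an admissible competitor in the infimum defining $\mu_{ext}(\nu)$, resp. $\lambda_{ext}(\bar\nu)$, contradicting $\bar\nu<\mu_{ext}(\nu)$, resp. $\nu<\lambda_{ext}(\bar\nu)$). In all cases such $(u,v)$ cannot exist, so $\sup_{\overline{\Theta}_{\sigma,\omega}}F<0$, and this value may be taken as $c_{\sigma,\omega}$.

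The main obstacle I anticipate is the limiting step producing a \emph{nontrivial} pair that simultaneously satisfies $F(u,v)\ge 0$ and both constraints $P_\nu(u)\le 0$, $Q_{\bar\nu}(v)\le 0$: weak continuity of $F$ depends on the strict subcriticality, and nontriviality of $u$ and $v$ relies on the uniform $L^p$, $L^q$ lower bounds extracted from the constraints. Once this limiting configuration is secured, the contradiction is a direct reading of the extremal-parameter structure established in Lemma \ref{extrecurvepa} and Corollary \ref{solutions}, together with the definition of $\sigma^*$.
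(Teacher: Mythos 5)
Your proof is correct and follows essentially the same route as the paper's: argue by contradiction with a sequence $(u_n,v_n)\in\overline{\Theta}_{\sigma,\omega}$ satisfying $F(u_n,v_n)\to 0$, use boundedness on $S$ to extract a weak limit with strong $L^p\times L^q$ convergence, obtain $u,v\neq 0$, $F(u,v)=0$, $P_\nu(u)\le 0$, $Q_{\bar\nu}(v)\le 0$ by weak continuity of $F$ and weak lower semicontinuity of the gradient norms, and contradict $\omega\in\gamma^*_-$. The only difference is that you spell out the final contradiction, which the paper dismisses with ``contradiction with the definition of $\gamma^*$''; note that your parenthetical argument (such a pair $(u,v)$ is an admissible competitor in the infimum defining $\mu_{ext}(\nu)$, resp.\ $\lambda_{ext}(\bar{\nu})$, forcing $Q_{\bar{\nu}}(v)>0$, resp.\ $P_{\nu}(u)>0$) is the cleaner one, since it covers all of $\gamma^*_-$ uniformly, including corner points such as $\nu=\lambda^*$ with $\bar{\nu}\le\mu_1$ that fall outside both your $\sigma^*$-based case and the hypotheses of Corollary \ref{solutions}.
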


\begin{proof} Suppose on the contrary that there exists $(u_n,v_n)\in \overline{\Theta}_{\sigma,\omega}$ such that $F(u_n,v_n)\to 0$ as $n\to \infty$.
Once $\|u_n\|_{1,p}=\|v_{n}\|_{1,q}=1$, we can assume that 
$(u_n,v_n)\rightharpoonup (u,v)$ in $W_0^{1,p}(\Omega)\times 	W_0^{1,q}(\Omega)$, $(u_n,v_n)\to (u,v)$ in $L^p(\Omega)\times 	L^q(\Omega)$. Since $P_{\lambda}(u_n),H_{\mu}(v_n)<0$, we have that  $u,v\neq 0$. Moreover $F(u,v)=0$. From the weak lower semi-continuity of the norm we obtain that $P_\nu(u)\le \liminf P_\nu(u_n)\le 0$ and $Q_{\overline{\nu}}(v)\le \liminf Q_{\overline{\nu}}(v_n)\le 0$. Once $\omega=(\nu,\bar{\nu})\in \gamma^*_-$ we get a contradiction with the definition of $\gamma^*$. Therefore, for each  $\sigma=(\lambda,\mu)\in \gamma^*_+$ and $\omega=(\nu,\bar{\nu})\in \gamma^*_-$ with $\nu<\lambda$, $\bar{\nu}<\mu$, there exists a constant $c_{\sigma,\omega}$ such that $F(u,v)<c_{\sigma,\omega}$ for each $(u,v)\in \overline{\Theta}_{\sigma,\omega}$. 

\end{proof}

From the definition of $J_\sigma$ we conclude 

\begin{cor}\label{locaminwelldefined} Suppose that $\sigma=(\lambda,\mu)\in \gamma^*_+$ and $\omega=(\nu,\bar{\nu})\in \gamma^*_-$ with $\nu<\lambda$, $\bar{\nu}<\mu$, then $\hat{J}_{\sigma,\omega}>-\infty$.
\end{cor}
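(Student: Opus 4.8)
The plan is to read off a uniform lower bound for $\hat{J}_{\sigma,\omega}$ directly from the explicit formula \eqref{J1} for $J_\sigma$, exploiting the fact that on $\overline{\Theta}_{\sigma,\omega}$ the numerator of $J_\sigma$ stays bounded while the denominator stays bounded away from zero.

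First I would record that $\overline{\Theta}_{\sigma,\omega}\subset\Theta_\sigma\subset S$, so every pair $(u,v)$ entering the infimum satisfies $\|u\|_{1,p}=\|v\|_{1,q}=1$, i.e. $\int|\nabla u|^p=\int|\nabla v|^q=1$. The variational characterization of the first eigenvalues then gives $\int|u|^p\le 1/\lambda_1$ and $\int|v|^q\le 1/\mu_1$, whence
\begin{equation*}
|P_\lambda(u)|\le 1+\frac{|\lambda|}{\lambda_1}=:M_1,\qquad |Q_\mu(v)|\le 1+\frac{|\mu|}{\mu_1}=:M_2,
\end{equation*}
uniformly over $\overline{\Theta}_{\sigma,\omega}$.

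Next I would invoke Proposition \ref{Fbounded}, which supplies a constant $c_{\sigma,\omega}<0$ with $F(u,v)<c_{\sigma,\omega}$ on $\overline{\Theta}_{\sigma,\omega}$; since $F<0$ on $\Theta_\sigma$ this yields $|F(u,v)|>|c_{\sigma,\omega}|>0$ throughout. Recalling that $C>0$ and $d>0$ by \eqref{alphabeta}, plugging the three bounds into \eqref{J1} gives
\begin{equation*}
-J_\sigma(u,v)=C\frac{|P_\lambda(u)|^{\alpha/(pd)}|Q_\mu(v)|^{\beta/(qd)}}{|F(u,v)|^{1/d}}\le C\,\frac{M_1^{\alpha/(pd)}M_2^{\beta/(qd)}}{|c_{\sigma,\omega}|^{1/d}},
\end{equation*}
a finite bound independent of $(u,v)\in\overline{\Theta}_{\sigma,\omega}$. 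Taking the infimum shows $\hat{J}_{\sigma,\omega}>-\infty$.

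The computation is routine once the two uniform estimates are in hand; the only real content is the strictly positive lower bound on $|F|$, and that has already been secured by Proposition \ref{Fbounded}. Thus the main (and essentially sole) obstacle, namely keeping $|F|$ away from zero as one approaches the set where $F=0$, was dealt with in the preceding proposition, and here it suffices to combine it with the trivial boundedness of $P_\lambda$ and $Q_\mu$ on the normalized sphere $S$.
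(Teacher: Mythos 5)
Your proof is correct and is essentially the paper's own argument: the paper derives the corollary directly from Proposition \ref{Fbounded} together with the formula \eqref{J1}, exactly as you do, with the denominator $|F|$ bounded away from zero by $|c_{\sigma,\omega}|$ and the numerator bounded since $\|u\|_{1,p}=\|v\|_{1,q}=1$ on $S$. Your write-up merely makes explicit the uniform bounds $|P_\lambda(u)|\le 1+|\lambda|/\lambda_1$ and $|Q_\mu(v)|\le 1+|\mu|/\mu_1$ that the paper leaves implicit.
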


From the Corollary \ref{locaminwelldefined} and arguing as in the Section 6 of \cite{bobil} we obtain

\begin{prop}\label{existence} Suppose that $\sigma=(\lambda,\mu)\in \gamma^*_+$ and $\omega=(\nu,\bar{\nu})\in \gamma^*_-$ with $\nu<\lambda$, $\bar{\nu}<\mu$. Then $\mathcal{S}_{\sigma,\omega}\neq \emptyset$.
\end{prop}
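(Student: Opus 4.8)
The plan is to apply the direct method of the calculus of variations to the functional $J_\sigma$ over the closed constraint set $\overline{\Theta}_{\sigma,\omega}$. First I would take a minimizing sequence $(u_n,v_n)\in\overline{\Theta}_{\sigma,\omega}$ with $J_\sigma(u_n,v_n)\to\hat{J}_{\sigma,\omega}$, which by Corollary \ref{locaminwelldefined} is a finite (negative) number. Since every element of $\overline{\Theta}_{\sigma,\omega}\subset\Theta_\sigma\subset S$ satisfies $\|u_n\|_{1,p}=\|v_n\|_{1,q}=1$, the sequence is bounded, and by reflexivity together with the compact Sobolev embeddings guaranteed by the subcritical condition $\frac{\alpha}{p^*}+\frac{\beta}{q^*}<1$ of \eqref{alphabeta}, I may pass to a subsequence with $(u_n,v_n)\rightharpoonup(u,v)$ weakly in $W_0^{1,p}(\Omega)\times W_0^{1,q}(\Omega)$, strongly in $L^p(\Omega)\times L^q(\Omega)$, and such that $F(u_n,v_n)\to F(u,v)$.

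The decisive point is to keep the denominator of $J_\sigma$ in \eqref{J1} bounded away from zero. This is exactly what Proposition \ref{Fbounded} provides: since $(u_n,v_n)\in\overline{\Theta}_{\sigma,\omega}$ one has $F(u_n,v_n)<c_{\sigma,\omega}<0$, and passing to the limit $F(u,v)\le c_{\sigma,\omega}<0$. In particular $u\neq0$ and $v\neq0$, and $|F(u_n,v_n)|\to|F(u,v)|>0$. I regard this step as the main obstacle of the whole argument; without the restriction to $\overline{\Theta}_{\sigma,\omega}$ the infimum would be $-\infty$ by Proposition \ref{unboun}, so it is precisely the constraints $P_\nu(u)\le0$, $Q_{\overline{\nu}}(v)\le0$ with $\omega\in\gamma^*_-$ that rule out the vanishing of $F$ along the minimizing sequence.

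Next I would establish the weak lower semicontinuity of $J_\sigma$. Writing $|P_\lambda(u_n)|=-P_\lambda(u_n)$ and using that $\int|\nabla\cdot|^p$ is weakly lower semicontinuous while $\int|\cdot|^p$ converges under the strong $L^p$ convergence, I get $P_\lambda(u)\le\liminf P_\lambda(u_n)$, hence $|P_\lambda(u)|\ge\limsup|P_\lambda(u_n)|$, and similarly $|Q_\mu(v)|\ge\limsup|Q_\mu(v_n)|$. Combined with $|F(u_n,v_n)|\to|F(u,v)|$ and formula \eqref{J1}, this yields $J_\sigma(u,v)\le\liminf J_\sigma(u_n,v_n)=\hat{J}_{\sigma,\omega}$. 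The same semicontinuity gives $P_\lambda(u)\le0$, $Q_\mu(v)\le0$ and $P_\nu(u)\le0$, $Q_{\overline{\nu}}(v)\le0$; moreover, since $\hat{J}_{\sigma,\omega}<0$, the inequality $J_\sigma(u,v)\le\hat{J}_{\sigma,\omega}<0$ forces $|P_\lambda(u)|>0$ and $|Q_\mu(v)|>0$, so in fact $P_\lambda(u)<0$ and $Q_\mu(v)<0$.

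Finally I would resolve the normalization. The weak limit need not lie on $S$, but each of $P_\lambda$, $Q_\mu$, $F$, $P_\nu$, $Q_{\overline{\nu}}$ is homogeneous in $u$ and in $v$ separately, so $J_\sigma$ is invariant under the rescaling $(u,v)\mapsto(u/\|u\|_{1,p},\,v/\|v\|_{1,q})$ and all the sign constraints are preserved by it. Replacing $(u,v)$ by this rescaled pair, I obtain an element of $\Theta_\sigma$ satisfying $P_\nu\le0$ and $Q_{\overline{\nu}}\le0$, hence an element of $\overline{\Theta}_{\sigma,\omega}$, with $J_\sigma(u,v)\le\hat{J}_{\sigma,\omega}$. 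Since $\hat{J}_{\sigma,\omega}$ is the infimum over $\overline{\Theta}_{\sigma,\omega}$, the reverse inequality is automatic, so $J_\sigma(u,v)=\hat{J}_{\sigma,\omega}$ and $(u,v)\in\mathcal{S}_{\sigma,\omega}$, which proves $\mathcal{S}_{\sigma,\omega}\neq\emptyset$.
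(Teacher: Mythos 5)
Your proof is correct and is essentially the argument the paper intends: the paper's own proof of this proposition simply invokes Corollary \ref{locaminwelldefined} and refers to the standard minimization argument of Section 6 of \cite{bobil}, which is exactly the direct method you carry out (with Proposition \ref{Fbounded} keeping the denominator of $J_\sigma$ bounded away from zero, weak lower semicontinuity of $P_\lambda$, $Q_\mu$, and the scaling invariance of \eqref{J1} handling the normalization back onto $S$). The only point worth adding is the one-line check that $\overline{\Theta}_{\sigma,\omega}\neq\emptyset$, so that a minimizing sequence exists at all: any $(u,v)\in\mathcal{S}_\omega$ (nonempty by Lemma \ref{globalmin}) satisfies $P_\lambda(u)<P_\nu(u)<0$ and $Q_\mu(v)<Q_{\bar{\nu}}(v)<0$ since $\nu<\lambda$, $\bar{\nu}<\mu$, hence lies in $\Theta_{\sigma,\omega}$.
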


\begin{rem}\label{decrea1} From the Proposition \ref{existence}, we conclude that the Proposition \ref{decrea} also holds true fro $\hat{J}_{\sigma,\omega}$.
\end{rem}

For $\sigma=(\lambda,\mu)\in \gamma^*_+$ and $\omega=(\nu,\bar{\nu})\in \gamma^*_-$ with $\nu<\lambda$, $\bar{\nu}<\mu$ define

\begin{equation*}
\mathcal{S}_{\sigma,\omega}^\partial=\mathcal{S}_{\sigma,\omega}\setminus \mathring{\mathcal{S}}_{\sigma,\omega}.
\end{equation*}

Observe that $\mathcal{S}_{\sigma,\omega}= \mathring{\mathcal{S}}_{\sigma,\omega}\cup \mathcal{S}_{\sigma,\omega}^\partial$. Now we can prove Lemma \ref{locmin}.

\begin{proof}[Proof of Lemma \ref{locmin}] For $\varepsilon\ge 0$ define $\mathcal{A}_\varepsilon =(\lambda,\lambda+\varepsilon)\times(\mu,\mu+\varepsilon)$. We claim that there exists $\varepsilon>0$ such that for all $\bar{\sigma}\in \mathcal{A}_{\varepsilon}$, there holds $\mathcal{S}_{\bar{\sigma},\omega}^\partial=\emptyset $. Indeed, suppose on the contrary that for each $n\in\mathbb{N}$, there exists $\sigma_n\in \mathcal{A}_{1/n}$ such that $\mathring{\mathcal{S}}_{\sigma_n,\omega}=\emptyset$. It follows from the Proposition \ref{existence} that for each $n$ there holds $\mathcal{S}_{\sigma_n,\omega}^\partial \neq \emptyset$.
	
	Choose a sequence $(u_n,v_n)\in \mathcal{S}_{\sigma_n,\omega}^\partial$. From the Proposition \ref{uni} we conclude that $(u_n,v_n)\to (u,v)\in \mathcal{S}_\sigma$. However, $P_{\nu}(u_n)=Q_{\overline{\nu}}(v_n)=0$ implies that $P_{\nu}(u)=Q_{\overline{\nu}}(v)=0$, which contradicts the Proposition \ref{Ssigma}.
	
\end{proof}

\section{Proof of the Theorems \ref{AKGLOBAL} and \ref{AKGLOBAL1}} \label{sumres}

In this section we prove our main results.

\begin{proof}[Proof of the Theorem \ref{AKGLOBAL}] Denote $\sigma=(\lambda,\mu)$. We start with $\mathbf{i)}$: from the Lemma \ref{extrecurvepa} there exists $(u,v)\in W_0^{1,p}(\Omega)\times W_0^{1,q}(\Omega)$ and $t,s>0$ such that $(tu,sv)$ is a solution of \eqref{pq}. Observe that $P_\lambda(|u|)=P_\lambda(u)=0$, $Q_\mu(|v|)=P_\lambda(v)=0$ and $F(|u|,|v|)=F(u,v)=0$. It follows that $(|u|,|v|)$ solves one of the problems $\mu_{ext}(\lambda)$ or $\lambda_{ext}(\mu)$ and consequently, from the Lemma \ref{extrecurvepa} $(t|u|,s|v|)$ is a solution to \eqref{pq} with $\Phi_\sigma(|u|,|v|)=0$. Arguing as in \cite{bobil}, we conclude that $u$ and $v$ are positive.

$\mathbf{ii)}$: suppose first that $\sigma\in \gamma^*_-$. In this case, the boundedness of $J_\sigma$ follows from $\mathbf{iii)}$, $\mathbf{vii)}$ of the Lemma \ref{extrecurvepa} and \eqref{J1}. If $\sigma\in \gamma^*$, the proof follows from the Lemma \ref{minextre}. 

$\mathbf{iii)}$:	it is a consequence of $\mathbf{iv)}$, $\mathbf{viii)}$ of the Lemma \ref{extrecurvepa}.
	
\end{proof}

\begin{proof}[Proof of the Theorem \ref{AKGLOBAL1}] From the Lemma \ref{minextre}, for each $\sigma\in \gamma^*$, there exists $(u,v)\in \mathcal{S}_\sigma$. Note that $(|u|,|v|)\in \mathcal{S}_\sigma$. It follows from the Proposition \ref{criticalpoint} that there exists $t,s>0$ such that $(t|u|,s|v|)$ is a solution of \eqref{pq}. Arguing as in \cite{bobil}, we conclude that $u$ and $v$ are positive.

Now we prove existence of solutions in $\gamma^*_+$.	From the Lemma \ref{locmin}, if $(\lambda_\sigma,\mu_\sigma)<\omega\in\gamma^*_-$, then there exists $\varepsilon_\sigma>0$ such that for each $\bar{\sigma}\in (\lambda,\lambda+\varepsilon_\sigma)\times (\mu,\mu+\varepsilon_\sigma)$ there holds $\mathring{\mathcal{S}}_{\bar{\sigma},\omega}\neq \emptyset$. Choose any $(u,v)\in \mathring{\mathcal{S}}_{\bar{\sigma},\omega}$ and observe that $(|u|,|v|)\in \mathring{\mathcal{S}}_{\bar{\sigma},\omega}$. It follows from the Proposition \ref{criticalpoint} that there exists $t,s>0$ such that $(t|u|,s|v|)$ is a solution of \eqref{pq}. Arguing as in \cite{bobil}, we conclude that $u$ and $v$ are positive.

\end{proof}	
	
\section{Remarks Concerning the Existence of a Second Branch of Positive Solutions}\label{remarks}

We observe from the Theorems \ref{AKGLOBAL} and \ref{AKGLOBAL1} that for $\sigma\in \gamma^*$, the problem \eqref{pq} has two distinct positive solutions, let's say, one is $(u,v)$ for which $\Phi_\sigma(u,v)<0$ and the other one is $(\bar{u},\bar{v})$ for which $\Phi_\sigma(\bar{u},\bar{v})=0$. This seems to suggest the existence of a second branch of positive solutions for $\sigma \in \gamma^+$. Moreover, it appears that these two branches must connect somewhere over the set $\gamma^+$.

In fact, for $\sigma\in \gamma^+$ as in the Theorem \ref{AKGLOBAL1}, one can see that the functional $J_\sigma$ has a mountain pass geometry, however, the same technique used in \cite{kaya} to prove the existence of a second branch of positive solutions can not be applied here because the geometry of the fibering maps is different. Indeed, in our case here, the critical points of the fibering maps are saddle points and this creates an obstacle to show the mountain pass geometry for the functional $\Phi_\sigma$.

\appendix

\section{}

\begin{prop}\label{uni}  For each $\sigma=(\lambda,\mu)\in \gamma^*$, choose $(\nu_\sigma,\bar{\nu}_\sigma)$ as in the Lemma \ref{minextre}. Suppose that $(\nu_\sigma,\bar{\nu}_\sigma)<\omega\in \gamma^*_-$. Let $\sigma_n\in \mathcal{A}_{1/n}$ and assume that $\sigma_n\to \sigma\in \mathcal{A}_0$,  then $\hat{J}_{\sigma_n,\omega}\to \hat{J}_{\sigma,\omega}$ and $\hat{J}_{\sigma,\omega}=\hat{J}_{\sigma}$. Moreover, if $(u_n,v_n)\in \mathcal{S}_{\sigma_n,\omega}$ then $(u_n,v_n)\to (u,v)\in \mathcal{S}_\sigma$. Arguing as in \cite{bobil,bobil2} we conclude that $u,v$ are positive.
	
\end{prop}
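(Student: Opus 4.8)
The plan is to split the statement into the three assertions $\hat{J}_{\sigma,\omega}=\hat{J}_\sigma$, the convergence $\hat{J}_{\sigma_n,\omega}\to\hat{J}_{\sigma,\omega}$, and the strong convergence of the minimizers, treating the equality of the two infima first since it organizes everything else. For this I would start from $\overline{\Theta}_{\sigma,\omega}\subset\Theta_\sigma$, which gives $\hat{J}_{\sigma,\omega}\ge\hat{J}_\sigma$ at once. For the reverse inequality I would pick a minimizer $(u,v)\in\mathcal{S}_\sigma$, whose existence is guaranteed by Proposition \ref{extre1}; Lemma \ref{minextre} gives $P_{\nu_\sigma}(u)\le 0$ and $Q_{\bar\nu_\sigma}(v)\le 0$, and since $(\nu_\sigma,\bar\nu_\sigma)<\omega=(\nu,\bar\nu)$ the identities $P_\nu(u)=P_{\nu_\sigma}(u)+(\nu_\sigma-\nu)\int|u|^p$ and $Q_{\bar\nu}(v)=Q_{\bar\nu_\sigma}(v)+(\bar\nu_\sigma-\bar\nu)\int|v|^q$ force $P_\nu(u)\le 0$, $Q_{\bar\nu}(v)\le 0$. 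Thus the global minimizer already lies in $\overline{\Theta}_{\sigma,\omega}$, whence $\hat{J}_{\sigma,\omega}\le J_\sigma(u,v)=\hat{J}_\sigma$ and therefore $\hat{J}_{\sigma,\omega}=\hat{J}_\sigma$. The same $(u,v)$ also sits in $\overline{\Theta}_{\sigma_n,\omega}$, because $\lambda_n>\lambda$, $\mu_n>\mu$ make $P_{\lambda_n}(u)<P_\lambda(u)<0$ and $Q_{\mu_n}(v)<Q_\mu(v)<0$; letting $n\to\infty$ in $\hat{J}_{\sigma_n,\omega}\le J_{\sigma_n}(u,v)$ and using the continuity of \eqref{J1} in $\sigma$ yields $\limsup_n\hat{J}_{\sigma_n,\omega}\le\hat{J}_{\sigma,\omega}$.

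For the $\liminf$ and the convergence of minimizers I would take $(u_n,v_n)\in\mathcal{S}_{\sigma_n,\omega}$ (which exist by Proposition \ref{existence}), normalized so that $\|u_n\|_{1,p}=\|v_n\|_{1,q}=1$, and pass to a subsequence with $(u_n,v_n)\rightharpoonup(u,v)$ weakly and strongly in $L^p(\Omega)\times L^q(\Omega)$; the subcritical condition in \eqref{alphabeta} makes $F$ weakly continuous, so $F(u_n,v_n)\to F(u,v)$. The first thing to secure is that $\hat{J}_{\sigma_n,\omega}$ stays bounded below: if along a subsequence $\hat{J}_{\sigma_n,\omega}\to-\infty$, then \eqref{J1} and the boundedness of $P_{\lambda_n}(u_n),Q_{\mu_n}(v_n)$ on the unit sphere force $F(u_n,v_n)\to 0$, whence $F(u,v)=0$; but the constraints $P_\nu(u_n)\le 0$, $Q_{\bar\nu}(v_n)\le 0$ pass to the weak limit and $u,v\neq 0$ (since $P_{\lambda_n}(u_n)<0$ keeps $\int|u_n|^p$ away from $0$), which contradicts $\omega\in\gamma^*_-$ exactly as in Proposition \ref{Fbounded}. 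With $\hat{J}_{\sigma_n,\omega}$ bounded below and bounded above by $\hat{J}_\sigma<0$, the same reading of \eqref{J1} shows $|F(u,v)|$, $|P_{\lambda_n}(u_n)|$ and $|Q_{\mu_n}(v_n)|$ are all bounded away from $0$; in particular $F(u,v)<0$, and writing $a=\lim P_{\lambda_n}(u_n)<0$, $b=\lim Q_{\mu_n}(v_n)<0$ one has $a=1-\lambda\int|u|^p$, $b=1-\mu\int|v|^q$.

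The decisive point is the identification of the limit, where I would exploit that the functional in \eqref{J1} is homogeneous of degree zero: $P_\lambda(ru)=r^pP_\lambda(u)$, $Q_\mu(\rho v)=\rho^qQ_\mu(v)$ and $F(ru,\rho v)=r^\alpha\rho^\beta F(u,v)$ combine to leave the quotient unchanged, so $\hat{J}_\sigma$ is the infimum of that quotient over the whole cone $\{P_\lambda<0,\ Q_\mu<0,\ F<0\}$, not merely over $S$. By weak lower semicontinuity of the gradient norms together with $\int|\nabla u|^p\le 1$, $\int|\nabla v|^q\le 1$ one gets $|P_\lambda(u)|\ge|a|$ and $|Q_\mu(v)|\ge|b|$, while $|F(u,v)|=\lim|F(u_n,v_n)|$, so substitution into the scale invariant quotient gives
\begin{equation*}
\hat{J}_\sigma\le -C\frac{|P_\lambda(u)|^{\alpha/(pd)}|Q_\mu(v)|^{\beta/(qd)}}{|F(u,v)|^{1/d}}\le -C\frac{|a|^{\alpha/(pd)}|b|^{\beta/(qd)}}{|F(u,v)|^{1/d}}=\lim_n\hat{J}_{\sigma_n,\omega}.
\end{equation*}
Together with the $\limsup$ bound and $\hat{J}_{\sigma,\omega}=\hat{J}_\sigma$ this squeezes every subsequential limit to $\hat{J}_\sigma$, proving $\hat{J}_{\sigma_n,\omega}\to\hat{J}_{\sigma,\omega}$, and forces both inequalities above to be equalities. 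The equalities $|P_\lambda(u)|=|a|$, $|Q_\mu(v)|=|b|$ mean $\int|\nabla u|^p=\int|\nabla v|^q=1$, so $\|u_n\|_{1,p}\to\|u\|_{1,p}$ and $\|v_n\|_{1,q}\to\|v\|_{1,q}$; since $W_0^{1,p}(\Omega)$ and $W_0^{1,q}(\Omega)$ are uniformly convex, the Radon--Riesz property upgrades weak to strong convergence, $(u_n,v_n)\to(u,v)$. Then $(u,v)\in\Theta_\sigma$ with $J_\sigma(u,v)=\hat{J}_\sigma$, i.e.\ $(u,v)\in\mathcal{S}_\sigma$, and replacing $(u,v)$ by $(|u|,|v|)$ and invoking the regularity and maximum principle arguments of \cite{bobil,bobil2} yields positivity.

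I expect the main obstacle to be precisely the upgrade from weak to strong convergence: because the elements of $\mathcal{S}_{\sigma_n,\omega}$ may lie on the boundary $\mathcal{S}_{\sigma_n,\omega}^\partial$, they need not solve the Euler--Lagrange system, so the $S^+$ argument used in Proposition \ref{extre1} is not directly available. Routing the compactness through the scale invariance of \eqref{J1} and the uniform convexity of the Sobolev spaces, as above, is what sidesteps this difficulty; the only remaining care is in checking that $a,b<0$ and $F(u,v)<0$, so that $(u,v)$ genuinely lies in the open cone on which the quotient represents $\hat{J}_\sigma$.
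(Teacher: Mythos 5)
Your proof is correct and rests on the same two pillars as the paper's own argument: the separation property of Lemma \ref{minextre}, which places a global minimizer $(u,v)\in\mathcal{S}_\sigma$ inside every $\overline{\Theta}_{\sigma_n,\omega}$ and yields the upper bound, and weak lower semicontinuity combined with the degree-zero homogeneity of \eqref{J1} and the $\gamma^*_-$ property of $\omega$ (which keeps $F$ away from zero in the weak limit) for the lower bound. The only difference is organizational: the paper assumes strong convergence fails and derives a contradiction via the monotonicity of $\hat{J}_{\sigma,\omega}$, obtaining $\hat{J}_{\sigma,\omega}=\hat{J}_{\sigma}$ only at the end through Proposition \ref{Ssigma}, whereas you prove that equality first, run a direct $\limsup$/$\liminf$ squeeze, and upgrade weak to strong convergence explicitly by uniform convexity — the same facts, differently ordered.
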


\begin{proof}  Write $\sigma=(\lambda,\mu)$. Once $\|u_n\|_{1,p}=\|v_{n}\|_{1,q}=1$, we can assume that 
	$(u_n,v_n)\rightharpoonup (u,v)$ in $W_0^{1,p}(\Omega)\times 	W_0^{1,q}(\Omega)$, $(u_n,v_n)\to (u,v)$ in $L^p(\Omega)\times 	L^q(\Omega)$. Since $P_{\lambda_n}(u_n),H_{\mu_n}(v_n)< 0$, we have that  $u,v\neq 0$. We can assume from the Remark \ref{decrea1} that $\hat{J}_{\sigma_n,\omega}\to I$ as $n\to \infty$. We claim that $(u_n,v_n)\to (u,v)$ in $W_0^{1,p}(\Omega)\times 	W_0^{1,q}(\Omega)$. Indeed, if this is not true, then from the weak lower semi-continuity of the norm, we have that at least one of the inequalities $P_\lambda(u)\le \liminf P_{\lambda_n}(u_n)$ and $Q_\mu(v)\le \liminf Q_{\mu_n}(v_n)$ is strictly, let's say $P_\lambda(u)<\liminf P_{\lambda_n}(u_n)$. It follows that 
	
	\begin{align*}
	J_\sigma \left(\frac{u}{\|u\|_{1,p}},\frac{v}{\|v\|_{1,q}}\right)&=-C\frac{|P_\lambda(u)|^{\alpha/(pd)}|Q_\mu(v)|^{\beta/(qd)}}{|F(u,v)|^{1/d}} \\
	&<\liminf J_{\sigma_n}(u_n,v_n) \\
	&=\lim \hat{J}_{\sigma_n,\omega} \\
	&=I,
	\end{align*}	
	and therefore $\hat{J}_\sigma <I$.

Now choose any $(u,v)\in \mathcal{S}_\sigma$ and observe from the Proposition \ref{Ssigma} that $(u,v)\in \Theta_{\sigma_n,\omega}$ for all $n$. It follows that  $J_{\sigma_n}(u,v)\to J_{\sigma}(u,v)=\hat{J}_\sigma<I$ as $n\to \infty$, hence, given $\delta>0$, there exists $N>0$ such that for $n>N$ there holds $\hat{J}_{\sigma_n,\omega}\le J_{\sigma_n}(u,v)<I-\delta$, which is a contradiction since $\hat{J}_{\sigma_n,\omega}\to I$ as $n\to \infty$. It follows that $(u_n,v_n)\to (u,v)$ in $W_0^{1,p}(\Omega)\times 	W_0^{1,q}(\Omega)$, $(u,v)\in \mathcal{S}_{\sigma,\omega}$ and $I=\hat{J}_{\sigma,\omega}$. From the Proposition \ref{Ssigma} we have that $\hat{J}_{\sigma,\omega}=\hat{J}_{\sigma}$
	
\end{proof}

%%%%%%%%%%%%%%%%%%%%%%%%%%%%%%%%%%%%%%%%

%    Bibliographies can be prepared with BibTeX using amsplain,
%    amsalpha, or (for "historical" overviews) natbib style.

\bibliographystyle{amsplain}
\bibliography{Ref}

\providecommand{\bysame}{\leavevmode\hbox to3em{\hrulefill}\thinspace}
\providecommand{\MR}{\relax\ifhmode\unskip\space\fi MR }
% \MRhref is called by the amsart/book/proc definition of \MR.
\providecommand{\MRhref}[2]{%
  \href{http://www.ams.org/mathscinet-getitem?mr=#1}{#2}
}
\providecommand{\href}[2]{#2}
\begin{thebibliography}{10}

\bibitem{alam}
Stanley Alama and Gabriella Tarantello, \emph{On semilinear elliptic equations
  with indefinite nonlinearities}, Calc. Var. Partial Differential Equations
  \textbf{1} (1993), no.~4, 439--475. \MR{1383913}

\bibitem{becapunire}
Henri Berestycki, Italo Capuzzo-Dolcetta, and Louis Nirenberg,
  \emph{Variational methods for indefinite superlinear homogeneous elliptic
  problems}, NoDEA Nonlinear Differential Equations Appl. \textbf{2} (1995),
  no.~4, 553--572. \MR{1356874}

\bibitem{bobil}
Vladimir Bobkov and Yavdat Il'yasov, \emph{Asymptotic behaviour of branches for
  ground states of elliptic systems}, Electron. J. Differential Equations
  (2013), No. 212, 21. \MR{3119066}

\bibitem{bobil2}
\bysame, \emph{Maximal existence domains of positive solutions for
  two-parametric systems of elliptic equations}, Complex Var. Elliptic Equ.
  \textbf{61} (2016), no.~5, 587--607. \MR{3482784}

\bibitem{bozmit}
Yuri Bozhkov and Enzo Mitidieri, \emph{Existence of multiple solutions for
  quasilinear systems via fibering method}, J. Differential Equations
  \textbf{190} (2003), no.~1, 239--267. \MR{1970963}

\bibitem{drabekMilota}
Pavel Dr\'abek and Jaroslav Milota, \emph{Methods of nonlinear analysis},
  second ed., Birkh\"auser Advanced Texts: Basler Lehrb\"ucher. [Birkh\"auser
  Advanced Texts: Basel Textbooks], Birkh\"auser/Springer Basel AG, Basel,
  2013, Applications to differential equations. \MR{3025694}

\bibitem{ilyas}
Ya.~Sh. Il'yasov, \emph{Nonlocal investigations of bifurcations of solutions of
  nonlinear elliptic equations}, Izv. Ross. Akad. Nauk Ser. Mat. \textbf{66}
  (2002), no.~6, 19--48. \MR{1970351}

\bibitem{ilyasENMM}
Yavdat Ilyasov, \emph{On extreme values of nehari manifold method via nonlinear
  rayleigh's quotient}, Topological Methods In Nonlinear Analysis \textbf{49}
  (2017), no.~2, 683--714.

\bibitem{kaya}
Yavdat Il'yasov and Kaye Silva, \emph{On branches of positive solutions for
  p-laplacian problems at the extreme value of nehari manifold method}, To
  Appear in Proceedings of the American Mathematical Society.

\bibitem{Lieberman92}
Gary~M. Lieberman, \emph{On the natural generalization of the natural
  conditions of ladyzhenskaya and uraltseva}, Partial differential equations,
  {P}art 1, 2 ({W}arsaw, 1990), Banach Center Publ., 27, Part 1, vol.~2, Polish
  Acad. Sci. Inst. Math., Warsaw, 1992, pp.~295--308. \MR{1205834}

\bibitem{neh1}
Zeev Nehari, \emph{On a class of nonlinear second-order differential
  equations}, Trans. Amer. Math. Soc. \textbf{95} (1960), 101--123.
  \MR{0111898}

\bibitem{ou2}
Tiancheng Ouyang, \emph{On the positive solutions of semilinear equations
  {$\Delta u+\lambda u+hu^p=0$} on compact manifolds. {II}}, Indiana Univ.
  Math. J. \textbf{40} (1991), no.~3, 1083--1141. \MR{1129343}

\bibitem{poh}
S.~I. Pohozaev, \emph{The fibration method for solving nonlinear boundary value
  problems}, Trudy Mat. Inst. Steklov. \textbf{192} (1990), 146--163,
  Translated in Proc. Steklov Inst. Math. {{\bf{1}}992}, no. 3, 157--173,
  Differential equations and function spaces (Russian). \MR{1097896}

\bibitem{kayeabi}
Kaye Silva and Abiel Macedo, \emph{Local minimizers over the nehari manifold
  for a class of concave-convex problems with sign changing nonlinearity},
  arXiv:1706.06686 (2017).

\bibitem{Vazquez84}
J.~L. V\'azquez, \emph{A strong maximum principle for some quasilinear elliptic
  equations}, Appl. Math. Optim. \textbf{12} (1984), no.~3, 191--202.
  \MR{768629}

\end{thebibliography}
%    Insert the bibliography data here.

\end{document}